\documentclass[11pt]{amsart}
\usepackage{graphicx}
\usepackage{amsmath,amsfonts,amssymb,mathtools,amsthm}
\usepackage[dvipsnames]{xcolor}
\usepackage{soul}
\usepackage{csquotes}
\usepackage{hyperref}
\usepackage{bm}
\usepackage{enumitem}
\usepackage{microtype}
\usepackage[top=2.5cm, bottom=3cm, left=2.5cm, right=2.5cm]{geometry}
\usepackage{dirtytalk} 

\newcommand{\R}{\mathbb{R}}

\newcommand{\Z}{\mathbb{Z}}	

\DeclareMathOperator{\id}{id}
\renewcommand{\phi}{\varphi}
\newcommand{\aut}{\mathrm{Aut}(X,\mu)}
\newcommand{\Aut}{\mathrm{Aut}}

\DeclarePairedDelimiter{\abs}{|}{|}

\usepackage[
backend=biber,
style=numeric,
sortcites=true,
maxbibnames=5,
maxcitenames=5,
maxalphanames=5,
sorting=nyt,
giveninits=true,
doi=false,
url=false,
isbn=false
]{biblatex}
\DeclareFieldFormat*{title}{#1}
\DeclareNameAlias{default}{given-family}

\renewbibmacro{in:}{}
\theoremstyle{plain}
\newtheorem*{theorem*}{Theorem}
\newtheorem{theorem}{Theorem}[section]
\newtheorem{corollary}[theorem]{Corollary}
\newtheorem{theoremletter}{Theorem}

\newtheorem{corollaryletter}[theoremletter]{Corollary}

\newtheorem{lemma}[theorem]{Lemma}

\theoremstyle{definition}
\newtheorem{definition}[theorem]{Definition}

\newtheorem{remark}[theorem]{Remark}

\newtheorem*{notation}{Notation}

\numberwithin{equation}{section}

\title{Quantitative orbit equivalence for rank-one systems}

\author{Corentin Correia}
\address{Corentin Correia, 
Department of Mathematics and Statistics, University of Ottawa, 150 Louis-Pasteur Pvt, Ottawa, ON, K1N 9A7, Canada}
\email{corentin.correia@math.cnrs.fr}

\author{Spyridon Petrakos}
\address{Spyridon Petrakos, 
Department of Mathematical Sciences, Chalmers University of Technology and University of Gothenburg, SE-412 96 Gothenburg, Sweden}
\email{petrakos@chalmers.se}

\date{\today}

\begin{document}

\begin{abstract}
    We prove that any two rank-one systems are sub-$L^1$ orbit equivalent, fully clarifying quantitative orbit equivalence for a generic class of transformations. As a corollary, we establish unconditional optimality of Belinskaya's theorem.
\end{abstract}

\maketitle

\section{Introduction}

Quantitative orbit equivalence as a framework is, in principle, a bridge between orbit equivalence and conjugacy of free ergodic probability-measure-preserving (p.m.p.) actions. Our interest lies in the traditional setting of single transformations on a Lebesgue space, wherein such bridges are particularly important since one end of the spectrum is known to be trivial~\cite{dye59} and the other to be intractable~\cite{forRudWei11}.

An orbit equivalence between two free ergodic p.m.p.~transformations $T\in\Aut(X,\mu)$ and $S\in\Aut(Y,\nu)$ is a measure isomorphism $\varphi\colon X\to Y$ for which there exist maps $c_T\colon X\to\Z$ and $c_S\colon Y\to\Z$, called orbit cocycles, with
\[
\varphi(Tx)=S^{c_T(x)}\varphi(x)\quad\text{and}\quad\varphi^{-1}(Sy)=T^{c_S(y)}\varphi^{-1}(y)
\]
almost surely. Quantitativeness is instantiated through integrability conditions on the orbit cocycles. 
\begin{definition}
    Given an increasing map $\omega\colon\R_+\to\R_+$, two free ergodic transformations $T\in\Aut(X,\mu)$ and $S\in\Aut(Y,\nu)$ are \emph{$\omega$-integrably orbit equivalent} if an orbit equivalence $\varphi$ between them can be chosen so that $c_T$ and $c_S$ satisfy
\[
\int_X\omega(\abs{c_T(x)})d\mu(x)<\infty\quad\text{and}\quad\int_Y\omega(\abs{c_S(y)})d\nu(y)<\infty.
\]
    They are \emph{$L^p$ orbit equivalent}, where $0<p<\infty$, if they are $\omega$-integrably orbit equivalent for $\omega(r)=r^p$, and \emph{sub-$L^p$ orbit equivalent} if they are $\omega$-integrably orbit equivalent for all $\omega$ with
    \[
    \lim_{r\to\infty}\frac{\omega(r)}{r^p}=0.
    \]
\end{definition}

While the above definition is rather modern~\cite{delKoiLeMTes22}, the conceptual idea of controlling the orbit cocycles can be traced back to Rudolph's theory of restricted orbit equivalence~\cite{rudolphRestOrb85}. In fact, the first result in this context is even older, dating back to 1968.
\begin{theorem*}[Belinskaya~\cite{belinskaya68}]\label{thm:Belinskaya}
    If two ergodic transformations $T,S\in\Aut(X,\mu)$ are $L^1$ orbit equivalent then they are flip-conjugate.
\end{theorem*}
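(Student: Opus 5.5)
Write $\varphi\colon X\to X$ for an orbit equivalence with $\varphi\circ T=S^{c(x)}\circ\varphi$, where $c=c_T\in L^1(X,\mu)$. Replacing $S$ by $\varphi^{-1}S\varphi$, I may assume $T$ and $S$ have the same orbits and $Tx=S^{c(x)}x$ with $c$ integrable. The goal is then to show that $S$ is conjugate to $T$ or to $T^{-1}$.

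The first step reduces to the case $\int c\,d\mu=\pm1$. I would show that $\int_X c\,d\mu\in\{-1,1\}$; here I use the ergodicity of $T$ and $S$ and the fact that $T$ and $S$ share orbits. Let $A_N(x)=\sum_{k=0}^{N-1}c(T^kx)$, so that $T^Nx=S^{A_N(x)}x$. By Birkhoff's theorem, $A_N/N\to\int c$ almost surely. The displacement along the $S$-orbit grows linearly with rate $\int c$. The map $n\mapsto A_n(x)$ is a bijection from $\Z$ onto $\Z$, because $T$ sweeps out the whole $S$-orbit and does so freely. A bijection $\Z\to\Z$ whose partial sums grow at average speed $a$ has density considerations forcing $|a|=1$. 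Concretely, the set $\{A_n(x):0\le n<N\}$ has $N$ elements and lies essentially in an interval of length about $|a|N$. This gives $|a|\ge1$, by a Kac/Rokhlin-type counting with ergodic averages, and the symmetric argument using $c_S$ would give the reverse inequality. Since integrability of $c_S$ is not assumed, I would instead derive the reverse inequality from the bijectivity: frequencies of hits on $S$-intervals must equal one. I expect this density argument to be the main obstacle. One must rule out that $A_n$ has $o(N)$ large excursions, which could spread the points over a long interval while most terms stay small. Integrability of $c$ together with the maximal ergodic theorem should control these excursions. After possibly replacing $S$ by $S^{-1}$, I have $\int c=1$.

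The second step constructs a conjugacy via a transfer function, writing $c-1$ as a coboundary in a generalized sense. I would consider the \enquote{ordering defect} $\psi(x)=\#\{n<0:A_n(x)\ge0\}-\#\{n\ge0:A_n(x)<0\}$. Finiteness of $\psi$ almost surely follows from $A_n/n\to1$ in both directions, via Birkhoff for $T^{-1}$. Then $\psi(Tx)-\psi(x)=c(x)-1$, because the shift re-indexes the order and the bijectivity of $A$ makes the count telescope. Setting $\theta(x)=S^{\psi(x)}x$, I expect $\theta(Tx)=S\theta(x)$. Equivalently, $\theta$ sends the position of $x$ in the $T$-order to the same position in the $S$-order, so it is a measurable bijection with inverse given symmetrically. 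The map $\theta$ preserves $\mu$ because it is a piecewise power of $S$ on the sets $\{\psi=k\}$ and is injective; injectivity follows from the order-preserving correspondence between $T$- and $S$-positions on each orbit. Hence $\theta$ conjugates $T$ to $S$, and undoing the reduction, the original $T$ and $S$ are flip-conjugate.
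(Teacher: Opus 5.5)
The paper gives no proof of this statement. It is quoted from Belinskaya's 1968 paper as background, so there is no internal argument to compare against. Judged on its own, your proof is correct up to one sign error, and Step 1 is much easier than you expect. It also uses only $c_T\in L^1$, so it proves the one-sided version, which is slightly stronger than the statement as phrased here.

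\textbf{The sign in the cocycle identity.} The identity $\psi(Tx)-\psi(x)=c(x)-1$ is false. Write $\sigma(m)=A_m(x)$ and
\[
D(p,q)=\#\{m<p:\sigma(m)\ge q\}-\#\{m\ge p:\sigma(m)<q\}.
\]
Moving the domain cut from $p$ to $p+1$ raises $D$ by exactly $1$, and moving the range cut from $q$ to $q+1$ lowers it by exactly $1$; this uses bijectivity of $\sigma$ and finiteness of the counts. Hence $D(p,q)=\psi(x)+p-q$. Since $A_n(Tx)=A_{n+1}(x)-c(x)$, you get $\psi(Tx)=D(1,c(x))=\psi(x)+1-c(x)$.

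This corrected sign is exactly what you need: $\theta(Tx)=S^{\psi(x)+1-c(x)}S^{c(x)}x=S\theta(x)$. Your stated identity would instead give $S^{\psi(x)+2c(x)-1}x$. Iterating the correct identity gives $\theta(T^px)=S^{\psi(x)+p}x$ for all $p\in\Z$. This is the clean way to see that $\theta$ is a bijection of each orbit. Combined with $\theta=S^k$ on $\{\psi=k\}$, it gives measure preservation, as you say.

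\textbf{Step 1 is not an obstacle.} You do not need the maximal ergodic theorem. Birkhoff's theorem already gives $\max_{0\le n<N}|A_n(x)-an|=o(N)$, so there are no excursions left to control.

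More importantly, Step 2 only uses $a=\int c\,d\mu\neq 0$. If $a>0$, Birkhoff for $T$ and for $T^{-1}$ gives $A_n\to\pm\infty$ with the sign of $n$, so $\psi$ is finite and $\theta$ is a conjugacy. If $a<0$, apply the same argument to $S^{-1}$ and $-c$. The fact $a\neq0$ is your own pigeonhole remark: the $N$ distinct integers $A_0(x),\dots,A_{N-1}(x)$ cannot fit in an interval of length $o(N)$.

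If you still want $a=\pm1$, the upper bound is the same count run in reverse. For $0<\varepsilon<a$ there is $n_0$ such that the $M+1$ indices $n$ with $A_n(x)\in[0,M]$ all lie in $\{|n|<n_0\}\cup\{n_0\le n\le M/(a-\varepsilon)\}$. This gives $M+1\le 2n_0+M/(a-\varepsilon)$ for all $M$, hence $a\le 1+\varepsilon$.
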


Another important layer of rigidity was obtained much more recently by Kerr and Li~\cite{kerrEntropyVirtualAbelianness2024}. They showed that entropy is preserved by Shannon orbit equivalence, a notion introduced by them in~\cite{kerLiSparseCon21}. The latter is implied by $\log$-integrable orbit equivalence~\cite[Theorem~3.16]{carderiBelinskayaTheoremOptimal2023}, tying it back our framework. They also exhibit an instance of non-rigidity by showing that every odometer is Shannon orbit equivalent to the universal one, a result that was subsequently improved to sub-$L^{1/3}$ orbit equivalence by the first named author~\cite{correiaRankoneSystemsFlexible2025}.

The aforementioned improvement also expanded the domain to a wider subclass of rank-one systems, but it remained one-directional. Most instances of quantitative equivalence (including Shannon) are not known to be transitive, and thus equivalence of each individual system to the universal odometer does not imply equivalence between any two of them. To deal with this pathology, Naryshkin and the second named author developed a different approach and used it to prove that any two odometers are sub-$L^1$ orbit equivalent---a bound that is optimal by Belinskaya's theorem~\cite{narPetOdom26}.

The main goal of the present work is to establish the same optimal bound for all rank-one systems.

\begin{theoremletter}\label{TheoremA}
    Any two rank-one systems are sub-$L^1$ orbit equivalent.
\end{theoremletter}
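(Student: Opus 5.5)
We will build the orbit equivalence directly between two arbitrary rank-one systems, using cutting-and-stacking presentations and an inductive matching of towers. This is in the spirit of the Naryshkin--Petrakos approach for odometers, rather than passing through a universal odometer, since we have no transitivity. Write $T$ as the limit of towers $\mathcal{T}_n$ of heights $h_n$, where $\mathcal{T}_{n+1}$ is obtained by cutting $\mathcal{T}_n$ into $q_n$ columns, adding spacers, and stacking. Write $S$ similarly with towers $\mathcal{S}_m$ of heights $h'_m$.

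\textbf{Step 1: reduction to partial bijections on nested towers.} By telescoping both sequences we may assume the heights grow as fast as we like, and that the spacer proportion added at stage $n$ is summable and small. We then construct, for each $n$, a measure isomorphism $\varphi_n$ between most of the levels of $\mathcal{T}_n$ and most of the levels of $\mathcal{S}_n$. We require:
\begin{enumerate}[label=(\roman*)]
\item $\varphi_n$ sends each full $T$-tower segment onto a union of $S$-orbit segments of comparable length;
\item $\varphi_{n+1}$ agrees with $\varphi_n$ outside a set of measure $\varepsilon_n$, with $\sum\varepsilon_n<\infty$;
\item wherever $\varphi_n$ is defined and $x,Tx$ lie in the same $\mathcal{T}_n$-column, the displacement satisfies $|c_T(x)|\le C h_{n}$, and symmetrically $|c_S(y)|\le C h'_n$.
\end{enumerate}
The limit $\varphi=\lim\varphi_n$ exists by Borel--Cantelli and is an orbit equivalence. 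Orbits of rank-one systems are increasing unions of tower columns, so the cocycles are finite almost surely.

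\textbf{Step 2: the matching at each stage.} Given $\varphi_n$, we pick the next stages $\mathcal{T}_{N}$ and $\mathcal{S}_{M}$ with heights $H\approx H'$ much larger than everything before, agreeing up to a factor $1+\delta_n$. Each $\mathcal{T}_N$-column is a concatenation of copies of $\mathcal{T}_n$ and spacers, and likewise on the $S$ side. We match the two concatenation words block by block, keeping the old matching on copies of $\mathcal{T}_n$ whenever possible. Where the words cannot be aligned, we redefine $\varphi$ on a small set, namely near the spacers and at a rounding remainder. Points moved at stage $n$ have cocycle of size at most $O(H)$. Points kept keep their old cocycle, except at the boundaries between $\mathcal{T}_n$-copies, where the jump is $O(h_n)$. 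The key estimate is then
\[
\int\omega(|c_T|)\,d\mu\lesssim\sum_n \mu(\text{points first having large jump at stage }n)\cdot\omega(C H_n).
\]
The measure of points whose cocycle is of order $H_n$ is about $\varepsilon_n + 1/h_{n}$. That is, a boundary fraction $1/h_n$ of levels in $\mathcal{T}_n$ each pay a cost $\approx H_n$, and the top level of a $\mathcal{T}_N$ column pays $H_N$ with measure $1/H_N$. This gives terms like $\omega(H_n)/H_n$ and $\varepsilon_n\,\omega(H_n)$.

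\textbf{Step 3: choosing rates and the main obstacle.} Since $\omega(r)=o(r)$ for every sub-$L^1$ gauge, the terms $\omega(H_n)/H_n$ are small. But to get a single orbit equivalence working for all such $\omega$ simultaneously, we must choose parameters so that the mass with cocycle $\ge r$ is $O(1/r)$ up to a $o(1)$ factor uniformly. Concretely, we aim to show $\nu\{|c|>r\}\le \eta(r)/r$ with $\eta\to0$ and $\sum_k \eta(2^k)<\infty$ along the scales. Then, since $\omega(r)/r\to0$, splitting dyadically gives a finite integral for every such $\omega$. A trick: fix the construction with $\sum_k\eta(2^k)$ finite, and integrate $\omega$ against the distribution tail.

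The main obstacle is the spacer mismatch. In a rank-one system, the spacer patterns of $T$ and $S$ are unrelated, so matching of $\mathcal{T}_n$-copies drifts out of alignment inside a long $\mathcal{T}_N$ column. The naive fix costs displacement $\approx H_N$ on a set of measure comparable to the spacer proportion, not $1/H_N$. I would handle this as in the odometer case via a ``sliding'' rearrangement. We absorb spacer discrepancies gradually, shifting the matching by at most $O(h_n)$ per $\mathcal{T}_n$-block. Cumulative drift is thus spread over the column, and each point moves by only a distance proportional to local block sizes, while only a $1/h$-fraction of points jump by the full height. Verifying that this keeps the tail bound $\eta(r)/r$ after telescoping (choosing $h_{n+1}$ huge relative to $h_n$ and the spacer count) is where I expect the real work to be.
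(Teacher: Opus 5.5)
\textbf{Gap 1: the quantitative target in Step 3 is impossible.} Suppose $\nu\{|c|>r\}\le\eta(r)/r$ with $\sum_k\eta(2^k)<\infty$. Summing dyadically gives $\int|c|\,d\nu\le 1+\sum_{k\ge0}2^{k+1}\nu\{|c|>2^k\}\le 1+2\sum_k\eta(2^k)<\infty$. So your orbit equivalence would be $L^1$, and Belinskaya's theorem would force $T$ and $S$ to be flip-conjugate. This fails, e.g.\ for an odometer and an irrational rotation, which are both rank-one.

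More generally, a single orbit equivalence that is $\omega$-integrable for every sublinear $\omega$ must have integrable cocycles: for a non-integrable $c$ one easily builds a sublinear $\omega$ with $\int\omega(|c|)=\infty$. So the orbit equivalence \emph{must} depend on $\omega$, which is all the definition asks. The paper accordingly fixes $\omega$ first and chooses a sublinear $\zeta\to\infty$ with $\zeta(x)\,\omega(x\zeta(x))/x\to0$ (Lemma~\ref{lem:sublinear}). Every growth condition is then tuned to $\omega$ and $\zeta$.

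Your Step 2 bookkeeping is also inconsistent. A fraction $1/h_n$ of points paying cost $\approx H_n$ contributes $\omega(H_n)/h_n$, not $\omega(H_n)/H_n$, and this need not be small when $H_n\gg h_n$.

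\textbf{Gap 2: nothing controls the spacer mismatch.} This is the obstacle you flag yourself, and the ``sliding'' heuristic is never shown to satisfy (i)--(iii) together. The paper handles it with two devices.

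First, using the Danilenko--Vieprik isomorphism criterion, each system is re-presented (Lemma~\ref{lem:ProperlyChosenCuttingAndStacking1}) by turning the top $\lceil k'_n/\zeta(k'_n)\rceil$ levels of each tower into spacers. Then every interior spacer run at stage $n$ has length at least $k'_n/\zeta(k'_n)$. Hence a stretch of $k_{n-1}$ spacers can have at most $\zeta(k'_{n-1})+1$ copies of $F_{n-1}$ interleaved in it. This caps the number of fragments, and thus of large jumps, by $O(\zeta)$ per block.

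Second, the paper does not align copies of $\mathcal{T}_n$ with copies of $\mathcal{S}_n$, which is what causes the drift. Instead the two $(C,F)$-constructions are interleaved, and maps $\varphi_n\colon F_{n+1}\to F_n$ are built on fixed chunks. Each $I_{n+1,i}$, of length $k_nk_{n-1}$, goes bijectively onto $k_{n-1}$ full copies of $F_n$. Copies of $F_{n-1}$ are mapped through $\psi_{n-1,\beta}^{-1}$, and spacer blocks are mapped monotonically, so they have cocycle $1$. This yields:
\begin{itemize}
\item the exact coherence $\varphi_n\circ\varphi_{n+1}=p_n$ on $F_n+C_n$ (Lemma~\ref{lem:projection});
\item eventual stabilisation of the cocycles (Lemma~\ref{lem:ThisIsAnOE});
\item the recursive estimates of Lemmas~\ref{lem:CocyclePsi} and~\ref{lem:CocyclePsiInverse}, in which spacers cost only $\omega(1)$ outside $O(\zeta)$ boundary points.
\end{itemize}
The remaining error terms, such as $\zeta(k'_{n-1})\,\omega(k_{n-1}k_{n-2})/k_{n-1}$, are made summable by telescoping. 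Without some mechanism of this kind, your argument does not establish the theorem.
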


It is worth noting that while the class of rank-one systems only contains transformations of entropy zero---constant entropy is necessary by Kerr and Li's result after all---it is generic in the space of p.m.p.~transformations on a Lebesgue space (see~\cite[Lemme~5.26]{leMaitreThesis} for a proof). Furthermore, the dynamical behaviour of its members can vary dramatically, a feature exploited to produce counterexamples in ergodic theory since the very conception of such systems. In particular, various dynamical properties or quantities that could be natural candidates for rigidity results are not preserved by sub-$L^1$ orbit equivalence (e.g.~slow entropy). See Section~\ref{sec:PrelR1} for more details.

The proof of the theorem relies on adapting the strategy pioneered in~\cite{narPetOdom26}. In the case of odometers, there is a natural (projective) limit structure in which factor maps are combinatorially homogeneous---in the language of cutting-and-stacking, there are no spacers---and choosing representatives is straightforward. The strategy was then to inductively construct an approximate intertwining while controlling the cocycles at each step, a process reduced by homogeneity to the careful construction of a single auxiliary map. In the case of arbitrary rank-one systems, however, one has to contend with the absence of such regularity. To that end, we utilise both the classical cutting-and-stacking and the $(C, F)$-construction introduced by del Junco~\cite{deljuncoSimpleMapNo1998} and extensively explored in the work Danilenko and Vieprik~\cite{danilenkoActionsFiniteRank2016,danilenkoClassificationRankoneActions2025,danilenkoExplicitRank1Constructions2023,danilenkoFunnyRankoneWeak2001,danilenkoInfiniteRankOne2004,danilenkoRankoneActionsTheir2019,danilenkoRankoneNonsingularActions2024}. The former is used to prune all the necessary parameters involved in choosing a suitable representative of a given system, while the latter provides the direct limit structure needed to access approximate intertwining. Having established our model, we then conduct a thorough analysis of potential smaller-scale behaviour at a given stage in order to construct local auxiliary maps that are glued together to produce one step of the intertwining. This process is delicate and technically formidable, contrasting the rather neat landscape of odometers.

As a corollary of our main theorem we obtain the following, which was known in the special case where $S$ is an odometer~\cite{corOdomutants25}.

\begin{theoremletter}\label{TheoremB}
    Let $S\in\aut$ be an ergodic system such that for every $n\geq 2$, $S^n$ is not ergodic. Let $\omega\colon\R_+\to\R_+$ be a sublinear map. Then there exists $T\in\aut$ such that $S$ and $T$ are $\omega$-integrably orbit equivalent but not flip-conjugate.
\end{theoremletter}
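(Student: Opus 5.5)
Theorem~\ref{TheoremB} follows from Theorem~\ref{TheoremA} once we find a rank-one system $T$ which is not flip-conjugate to $S$ but to which $S$ is suitably related. The plan is to split according to whether $S$ itself is rank-one.

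\emph{Case 1: $S$ is rank-one.} We choose a rank-one $T$ that is not flip-conjugate to $S$. This can be done by a spectral or mixing invariant, for instance:
\begin{itemize}
\item if $S$ is not weakly mixing, take $T$ a weakly mixing rank-one system such as Chacon's map;
\item if $S$ is weakly mixing, take $T$ an odometer whose set of eigenvalues is nontrivial.
\end{itemize}
Weak mixing and discrete spectrum are invariant under flip-conjugacy, since $T^{-1}$ has the same spectral type as $T$. Theorem~\ref{TheoremA} then makes $S$ and $T$ sub-$L^1$ orbit equivalent. Since $\omega$ is sublinear, i.e.\ $\omega(r)/r\to 0$, this gives $\omega$-integrability of both cocycles, and we are done.

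\emph{Case 2: $S$ is not rank-one.} Take $T$ to be any rank-one system, say the universal odometer. Then $T$ is not flip-conjugate to $S$, because being rank-one is invariant under flip-conjugacy. The remaining task is an $\omega$-integrable orbit equivalence between $S$ and some rank-one $T$.

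This is where the hypothesis that no power $S^n$ with $n\ge 2$ is ergodic enters. I would read it as saying that the eigenvalue group of $S$ contains a primitive $n$-th root of unity for every $n$. Hence $S$ has a factor onto the universal odometer $O$, equivalently a nested sequence of Rokhlin towers with exact (no error set) partitions of heights $n!$. Following the odometer case in~\cite{corOdomutants25}, I would use these exact towers to build an orbit equivalence between $S$ and the universal odometer directly. On each tower level the map is rearranged within orbit segments of length $n!$, choosing the rearrangement scales quickly enough that the cocycle is bounded by roughly $n!$ on a set of measure about $1/n!$ at stage $n$. Summing $\omega(n!)\cdot\mu(\text{bad set at stage } n)$, sublinearity makes the sum finite after passing to a sufficiently sparse subsequence of stages depending on $\omega$.

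The main obstacle is Case 2: producing exact towers with controlled bad sets from the roots of unity, and arranging the two-sided cocycle bound. If this direct construction stalls, I would instead invoke the known odometer result~\cite{corOdomutants25}, which presumably gives exactly an $\omega$-OE with the universal odometer under this eigenvalue hypothesis. Given that the paper calls Theorem~\ref{TheoremB} a corollary, Case 2 is likely handled by that citation, so the genuinely new input is Case 1 via Theorem~\ref{TheoremA}.
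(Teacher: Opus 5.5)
Your Case~2 has a genuine gap. It asks for an $\omega$-integrable orbit equivalence between a non-rank-one $S$ and some rank-one $T$, and in general no such orbit equivalence exists. The hypothesis allows positive entropy. Take $S=O\times B$ with $O$ the universal odometer and $B$ a Bernoulli shift: $S$ is ergodic because $B$ is weakly mixing, every $S^n$ with $n\geq 2$ is non-ergodic because $O^n$ is, and $h(S)=h(B)>0$. For the sublinear map $\omega(r)=\sqrt{r}$, an $\omega$-integrable orbit equivalence is $\log$-integrable. It is therefore a Shannon orbit equivalence \cite[Theorem~3.16]{carderiBelinskayaTheoremOptimal2023}, and so entropy-preserving \cite{kerrEntropyVirtualAbelianness2024}. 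Since rank-one systems have zero entropy, no rank-one $T$ can work, and no rearrangement within orbit segments can give the estimate you describe.

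Even in zero entropy, your sketch has a hole: the exact towers coming from an odometer factor do not generate the $\sigma$-algebra of $S$, and you do not say how the rearranged map becomes isomorphic to an odometer. The fallback citation does not help, since \cite{corOdomutants25} only covers the case where $S$ is itself an odometer.

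There is also a smaller error. Non-ergodicity of all $S^n$ only gives, for each $n$, an eigenvalue $\lambda\neq 1$ with $\lambda^n=1$, i.e.\ a primitive $p$-th root of unity for every prime $p$. The odometer whose cutting parameters are the successive primes satisfies the hypothesis but does not have $i$ as an eigenvalue. So you get a factor onto some infinite odometer, not the universal one. Note also that under the hypothesis $S$ is never weakly mixing, so Case~1 reduces to its first bullet; that case is correct.

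The missing idea is that $T$ need not be rank-one: apply Theorem~\ref{TheoremA} to a factor of $S$ and then lift.
\begin{enumerate}
\item Let $\pi$ be a factor map from $S$ onto an odometer $S_0$. Choose an irrational $\theta$ such that $e^{2i\pi\theta}$ is not an eigenvalue of $S$; this is possible because the point spectrum is countable.
\item Theorem~\ref{TheoremA}, applied to the rank-one systems $S_0$ and $R_\theta$, gives $T_0\in\aut$ isomorphic to $R_\theta$ with the same orbits as $S_0$. Its cocycles $c,c'$, defined by $S_0x=T_0^{c(x)}x$ and $T_0x=S_0^{c'(x)}x$, are $\omega$-integrable.
\item Define $Tx=S^{c'(\pi(x))}x$. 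Then $\pi\circ T=T_0\circ\pi$, and $T$ has the same orbits as $S$ with cocycles $c'\circ\pi$ and $c\circ\pi$. These are $\omega$-integrable because $\pi$ is measure-preserving.
\item $e^{2i\pi\theta}$ is an eigenvalue of $T$, pulled back from $R_\theta$. It is not an eigenvalue of $S$, nor of $S^{-1}$, since the point spectrum is a subgroup of the circle. Hence $S$ and $T$ are not flip-conjugate.
\end{enumerate}
This treats every $S$ uniformly, regardless of entropy or rank. It is also where the hypothesis on the powers of $S$ is actually used.
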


When combined with~\cite[Theorem~1.3]{carderiBelinskayaTheoremOptimal2023}, the above implies unconditional optimality of Belinskaya's theorem.

\begin{corollaryletter}
    For any ergodic $S\in\aut$ and any sublinear map $\omega\colon\R_+\to\R_+$, there exists $T\in\aut$ such that $S$ and $T$ are $\omega$-integrably orbit equivalent but not flip-conjugate.
\end{corollaryletter}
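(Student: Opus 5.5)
The corollary splits into two cases according to whether some power $S^n$ with $n\geq 2$ is ergodic.

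If no such power is ergodic, Theorem~\ref{TheoremB} applies directly to $S$ and $\omega$ and gives the required $T$.

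Otherwise $S^n$ is ergodic for some $n\geq 2$. In this case we invoke \cite[Theorem~1.3]{carderiBelinskayaTheoremOptimal2023}. As the paper itself says, that result combined with Theorem~\ref{TheoremB} gives unconditional optimality, so it should provide, for exactly this class of $S$ and any sublinear $\omega$, a transformation $T$ that is $\omega$-integrably orbit equivalent to $S$ but not flip-conjugate to it. The first step is to check the precise hypothesis of \cite[Theorem~1.3]{carderiBelinskayaTheoremOptimal2023}. The natural expectation is that it covers systems admitting a nontrivial ergodic power, or at least those not having discrete spectrum of a particular odometer type. The second step is to verify that our dichotomy ``some $S^n$ ergodic'' versus ``no $S^n$ ergodic'' exhausts that hypothesis together with the hypothesis of Theorem~\ref{TheoremB}, so that every ergodic $S$ falls into at least one case. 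If the two hypotheses overlap, that causes no problem.

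The proof of Theorem~\ref{TheoremB} itself should go as follows. Non-ergodicity of every $S^n$ with $n\geq 2$ means that every root of unity $e^{2\pi i/n}$ is an eigenvalue of $S$. Hence $S$ factors onto the universal odometer, and more generally onto any odometer whose eigenvalue group consists of roots of unity. Pick a rank-one system $R$ with the same rational eigenvalues but not flip-conjugate to $S$, for instance by varying the spectral type. Theorem~\ref{TheoremA} then gives a sub-$L^1$, and hence $\omega$-integrable, orbit equivalence between suitable rank-one models. This equivalence is lifted or skew-producted along the odometer factor to produce $T$.

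The main obstacle is the case bookkeeping. We have to confirm that the cited theorem covers every ergodic $S$ having an ergodic power $S^n$ with $n\geq 2$, so that no ergodic system falls outside both cases.
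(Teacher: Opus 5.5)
Your argument is the paper's: split according to whether some $S^n$ with $n\geq 2$ is ergodic, use \cite[Theorem~1.3]{carderiBelinskayaTheoremOptimal2023} in that case (this is the class it handles) and Theorem~\ref{TheoremB} otherwise. Your side sketch of Theorem~\ref{TheoremB} is not needed for the corollary, and it is inaccurate in two places. First, non-ergodicity of all powers only puts a nontrivial $p$-th root of unity in the point spectrum for each prime $p$, which gives some odometer factor $S_0$ but not necessarily the universal one. Second, non-flip-conjugacy comes from pairing $S_0$ with $R_\theta$ for $e^{2i\pi\theta}$ outside the countable point spectrum of $S$; this eigenvalue is inherited by the lift $Tx=S^{c'(\pi(x))}x$, whereas mere non-flip-conjugacy of $R$ and $S$ would not transfer to $T$.
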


The paper is structured as follows. Section~\ref{sec:PrelR1} provides an overview of rank-one systems. Section~\ref{sec:ProofThA} contains all the technical work, culminating in the proof of Theorem~\ref{TheoremA}. Section~\ref{sec:Belinskaya} is dedicated to the proof of Theorem~\ref{TheoremB}.

\textbf{Acknowledgements.} The first named author was partially supported by the Fields Institute for Research in Mathematical Sciences. The second named author was funded by the Knut and Alice Wallenberg Foundation through a postdoc grant.

\section{Preliminaries on rank-one systems}\label{sec:PrelR1}

\textbf{Rank-one systems defined via the cutting-and-stacking method.} Rank-one systems can be viewed through various (mostly) equivalent lenses. We will restrict to only two of them. We refer the interested reader to Ferenczi's survey~\cite{ferencziSystemsFiniteRank1997} for an illuminating Cook's tour of finite-rank systems. We first introduce the cutting-and-stacking picture below.

\begin{definition}\label{defr1}
		A transformation $T\in\aut$ is of \textbf{rank one} if there exist
		\begin{enumerate}
			\item a sequence $(q_n,(\sigma_{n,0},\ldots,\sigma_{n,q_n}))_{n\geq 0}$ of positive integer parameters satisfying
			\begin{equation}
				\displaystyle\sum_{n=0}^{+\infty}{\frac{\sigma_n}{k_{n+1}}}<+\infty,
			\end{equation}
			where $\sigma_n=\sigma_{0,n}+\sigma_{1,n}+\dots+\sigma_{q_n,n}$ and $(k_n)_{n\geq 0}$ is inductively defined by $k_0=1$ and $k_{n+1}=q_nk_n+\sigma_n$;
			\item measurable subsets of $X$, denoted by $B_n$ for every $n\geq 0$, $B_{n,i}$ for every $n\geq 0$ and $0\leq i\leq q_n-1$, and $\Sigma_{n,i,j}$ for every $n\geq 0$, $0\leq i\leq q_n$ and $1\leq j\leq \sigma_{n,i}$ (if $\sigma_{n,i}=0$, then the corresponding collection is empty) such that for all $n\geq 0$
			\begin{enumerate}
				\item $B_n,\ldots ,T^{k_n-1}(B_n)$ are pairwise disjoint;
				\item $(B_{n,0}, B_{n,1}, \ldots, B_{n,q_n-1})$ is a partition of $B_n$;
				\item \[T^{k_n}(B_{n,i})=
				\begin{cases}
					\Sigma_{n,i+1,1}&\text{ if }\sigma_{n,i}>0\\
					B_{n,i+1}&\text{  if }\sigma_{n,i}=0\text{  and }i<q_n-1
				\end{cases};\]
				\item if $\sigma_{n,i}>0$, then \[T(\Sigma_{n,i,j})=
				\begin{cases}
					\Sigma_{n,i,j+1}&\text{ if }j<\sigma_{n,i}\\
					B_{n,i}&\text{ if }j=\sigma_{n,i}\text{ and }i\leq q_n-1
				\end{cases};\]
				\item \[B_{n+1}=
				\begin{cases}
					\Sigma_{n,0,1}&\text{  if }\sigma_{n,0}>0\\
					B_{n,0}&\text{  if }\sigma_{n,0}=0
				\end{cases};\]
                \item the Rokhlin towers $\mathcal{R}_n\coloneq (T^k(B_n))_{0\leq k\leq k_n-1}$
		generate the $\sigma$-algebra $\mathcal{A}$ of measurable subsets.
			\end{enumerate}
		\end{enumerate}
        The tower $\mathcal{R}_n$ has height $k_n$. Note that $\mathcal{R}_0$ is the tower with only one level $B_0$. The sets $\Sigma_{n,i,j}$ are called \textbf{spacers} and $\sigma_n$ is the number of new spacers at step $n$. The integers $q_n$ and $\sigma_{n,i}$ are called the \textbf{cutting} and \textbf{spacing} parameters, respectively.
	\end{definition}

For a given sequence $(q_n,(\sigma_{n,0},\ldots,\sigma_{n,q_n}))_{n\geq 0}$ with $(\sigma_n/k_{n+1})_{n\geq 0}$ summable, it is always possible to build a rank-one system with these as parameters. See, for instance,~\cite[Lemma~3.4]{correiaRankoneSystemsFlexible2025} and the discussion thereafter.

The hypothesis on the Rokhlin towers $\mathcal{R}_n$ guarantees that rank-one systems are fully determined by their cutting-and-stacking constructions. If $T$ admits such a construction with Rokhlin towers increasing to a sub-$\sigma$-algebra $\mathcal{B}$ of $\mathcal{A}$, then $T$, seen as an element of $\Aut(X,\mathcal{A},\mu)$, is not necessarily a rank-one system but is an extension of one.

On the other hand, two different families of cutting and spacing parameters do not necessarily define non-isomorphic systems. Indeed, in the construction of a rank-one system with parameters $q_n$ and $\sigma_{n,i}$, one can decide to only consider a subsequence $\mathcal{R}_{n_k}$ of Rokhlin towers. For example, the new cutting parameters will be $q_{n_k}q_{n_k+1}\ldots q_{n_{k+1}-1}$ for $k\geq 0$. More generally, using another model of rank-one systems called the $(C,F)$-construction, Danilenko and Vieprik have devised a sufficient and necessary condition on the parameters for two rank-one systems to be isomorphic. We will return to this point after exploring the breadth of this class of systems.

Rank-one systems form a class of ergodic systems that is generic in $\aut$ endowed with the weak topology. They all have zero entropy, but exhibit a wide range of slow entropy behaviour; see for example~\cite[Theorem~5.9]{banerjeeSlowEntropyCombinatorial2023}. The following examples demonstrate considerable flexibility also in terms of spectral and mixing properties.

\textbf{Examples of rank-one systems.} Fundamental examples of rank-one systems are the irrational rotations
	\[R_{\theta}\colon z\in\mathbb{T}\mapsto e^{2i\pi\theta}z\in\mathbb{T}\]
for every irrational number $\theta$, where $\mathbb{T}$ is the unit circle endowed with its Haar measure. These systems are not weakly mixing. Moreover they have discrete spectrum and the point spectrum of $R_{\theta}$ is $\{e^{2i\pi n\theta}\mid n\in\Z\}$, so $R_{\theta}$ and $R_{\theta'}$ are isomorphic if and only if $\theta=\theta'\bmod \Z$ or $\theta=-\theta'\bmod \Z$.
    
Also fundamental are the odometers. These are exactly the rank-one systems without spacers (i.e.~$\sigma_{n,i}=0$), in which case the Rokhlin towers are partitions of the space. Such a system is isomorphic to the adding machine $S$ on a space of the form $\prod_{n\geq 0}{\{0,1,\ldots ,q_n-1\}}$, namely the addition by $(1,0,0,0,\ldots)$ with carry over to the right, and it preserves the product of uniform probability measures on each finite set $\{0,1,\ldots,q_n-1\}$. Denote the cylinders of length $k$ by
	\[[x_0,\ldots ,x_{k-1}]_k\coloneq \Big \{y\in\prod_{n\geq 0}{\{0,1,\ldots ,q_n-1\}}\mid y_0=x_0,\ldots ,y_{k-1}=x_{k-1}\Big \}.\]
Then Definition~\ref{defr1} is satisfied with
	\[B_n=[\underbrace{0,\ldots ,0}_{n\text{ times}}]_{n},\ B_{n,i}=[\underbrace{0,\ldots ,0}_{n\text{ times}},i]_{n+1},\ k_n=q_0q_1\dots q_{n-1}.\]
	
In the class of odometers, the number of occurrences of every prime factor in the set $\{q_n\mid n\geq 0\}$ forms a total invariant of conjugacy. It is a consequence of the Halmos-von Neumann Theorem since odometers have discrete spectrum and their eigenvalues are given by these occurrences. In particular, odometers have eigenvalues non-equal to $1$ and are not weakly mixing. Moreover odometers and irrational rotations are not isomorphic. Notice that the Halmos-von Neumann Theorem implies that the conjugacy classes among ergodic systems with discrete spectrum coincide with the flip-conjugacy classes since the point spectrum of a system is a subgroup of $\mathbb{T}$. 
    
Chacon's map is the first example of a weakly mixing system which is not strongly mixing~\cite{chaconWeaklyMixingTransformations1969a} and was the starting point for the theory of rank-one systems. It is a rank-one transformation defined with cutting and spacing parameters $q_n=3$, $\sigma_{n,0}=\sigma_{n,1}=\sigma_{n,3}=0$, $\sigma_{n,2}=1$.

Finally, there exist strongly mixing rank-one systems (and therefore of continuous spectrum) with rather exotic dynamical properties. The first such example was constructed by Ornstein~\cite{ornRootProb} using a probabilistic approach (\enquote{random spacers}).

\textbf{The $(C,F)$-construction.} We now move on to the model for rank-one systems that we will use for our construction of orbit equivalence. The $(C,F)$-construction can be considered as a generalization of the adding machines describing odometers. It was introduced by del Junco~\cite{deljuncoSimpleMapNo1998} and extensively used by Danilenko and Vieprik for explicit constructions of rank-one systems with prescribed properties, or for generalizations of rank-one systems as non-singular systems, or even as group actions (see e.g.~\cite{danilenkoFunnyRankoneWeak2001,danilenkoInfiniteRankOne2004,danilenkoActionsFiniteRank2016,danilenkoRankoneActionsTheir2019,danilenkoExplicitRank1Constructions2023,danilenkoRankoneNonsingularActions2024,danilenkoClassificationRankoneActions2025}).

A $(C,F)$-construction is encoded by sequences $(F_n)_{n\geq 0}$ and $(C_n)_{n\geq 0}$ of finite subsets of $\Z$ such that for every $n\geq 0$:
\begin{itemize}
    \item $F_0=\{0\}$;
    \item $|C_n|>1$;
    \item $F_n+C_n\subset F_{n+1}$;
    \item if $c,c'$ are different points of $C_n$ then $(F_n+c)\cap (F_n+c')=\emptyset$.
\end{itemize}
Throughout this paper, $F_n$ will be an interval of the form $\{0,1,\ldots,k_n-1\}$. The transformation provided by a $(C,F)$-construction as above (and which turns out to be isomorphic to a rank-one system) is formally defined as follows. We first set
$$X_n=F_n\times C_n\times C_{n+1}\times\dots$$
for every $n\geq 0$, and let $X$ be the inductive limit of these sets with respect to the embeddings
\[(f_n,c_n,c_{n+1},\dots)\in X_n\mapsto (f_n+c_n,c_{n+1},\dots)\in X_{n+1}.\]
We denote by $\iota^{(n)}\colon X_n\to X$ the canonical embedding. Given a subset $A$ of $F_n$, the cylinder over $A$ is the subset of $X$ defined by
$$[A]_n\coloneq\iota^{(n)}\left(\{(f_n,c_n,c_{n+1},\dots)\in X_n\mid f_n\in A\}\right),$$
and we simply write $[f]_n$ when $A$ is the singleton $\{f\}$.

The set $X$ is a locally compact space which can be endowed with the unique (up to multiplicative constant) $\sigma$-finite Borel measure $\mu$ satisfying $\mu([f]_n)=\mu([f']_n)$ for every $f,f'\in F_n$ and $n\geq 0$. The measure $\mu$ is finite if and only if the series
$$\sum_{n=1}^{+\infty}{\frac{|F_{n+1}|-|F_n|\cdot|C_n|}{|F_{n+1}|}}$$
converges. We assume that the latter holds and $\mu$ is chosen to be a probability measure.

We finally define $T\in\aut$ as follows. For almost every $x\in X$, there exists $n\geq 0$ such that $x$ can be written as $\iota^{(n)}(f_n,c_n,c_{n+1},\dots)$ with $f_n\in F_n$ such that $f_n+1\in F_n$. We set
$$Tx\coloneq\iota^{(n)}(f_n+1,c_n,c_{n+1},\dots).$$

In fact, the elements of $F_n$ encode the levels of the tower $\mathcal{R}_n$ in a cutting-and-stacking construction of a rank-one system, and $C_n$ provides a description of the levels of $\mathcal{R}_{n+1}$ that are included in a level of $\mathcal{R}_n$, in such a way that the cutting parameter corresponds to $q_n\coloneq |C_n|$ and, denoting $C_n=\{c_{n,0}<c_{n,1}<\dots<c_{n,q_n-1}\}$, the spacing parameters are exactly
\begin{itemize}
    \item $\sigma_{n,0}=c_{n,0}$;
    \item $\sigma_{n,i}=c_{n,i}-(c_{n,i-1}+k_n)$ for every $i\in\{1,2,\dots,q_n-1\}$;
    \item $\sigma_{n,q_n}=k_{n+1}-(c_{n,q_n-1}+k_n)$.
\end{itemize}
Moreover, the set $X_n$ plays the role of the subset covered by the tower $\mathcal{R}_n$.

The transformation $T$ provided by a $(C,F)$-construction is isomorphic to the rank-one system described by the corresponding parameters $k_n,q_n,\sigma_{n,i}$. We refer the reader to~\cite[Theorem~1.13]{danilenkoRankoneNonsingularActions2024} for a detailed proof. Finally, the convergence of the aforementioned series is equivalent to the summability of $(\sigma_n/k_{n+1})_{n\geq0}$.

We end this section with the classification up to isomorphism of rank-one systems, due to Danilenko and Vieprik. The general statement deals with rank-one group actions preserving a $\sigma$-finite measure. We state it in the particular case of p.m.p.~$\Z$-actions with subsets $F_n$ chosen as intervals, corresponding to the classical notion of rank-one transformations.

\begin{theorem}[{\cite[Theorem~A]{danilenkoClassificationRankoneActions2025}}]\label{(C,F)-iso}
    Let $T$ and $T'$ be two probability measure-preserving systems, with $(C,F)$-constructions described by sequences $(C_n,F_n)_{n\geq 0}$ and $(C'_n,F'_n)_{n\geq 0}$ respectively. Then $T$ and $\tilde T$ are isomorphic if and only if there exist increasing sequences $(k_n)_{n\geq 0}$ and $(l_n)_{n\geq 0}$ of non-negative integers, with $k_0=l_0=0$, and subsets $J_n\subseteq F_{k_n}$, $J'_n\subseteq F'_{l_n}$ such that
    \begin{enumerate}
        \item $F_{k_n}+J'_n\subseteq F'_{l_n}$ and
        \item the mapping $(f,f')\in F_{k_n}\times J'_n\mapsto f+f'\in F'_{l_n}$ is one-to-one, for all $n\geq 0$;
        \item the series
        \[\sum_{n\geq 0}{\frac{\big|(J'_n +J_{n+1})\ \Delta\  (C_{k_n}+\dots +C_{k_{n+1}-1})\big|}{|C_{k_n}|\dots |C_{k_{n+1}-1}|}}\]
        converges;
        \item $F'_{l_n}+J_{n+1}\subseteq F_{k_{n+1}}$ and
        \item the mapping $(f',f)\in F'_{l_n}\times J_{n+1}\mapsto f'+f\in F_{k_{n+1}}$ is one-to-one, for all $n\geq 0$;
        \item the series 
        \[\sum_{n\geq 0}{\frac{\big| (J_{n+1} +J'_{n+1})\ \Delta\  (C'_{l_n}+\dots +C'_{l_{n+1}-1})\big|}{|C'_{l_n}|\dots |C'_{l_{n+1}-1}|}}\]
        converges.
    \end{enumerate}
\end{theorem}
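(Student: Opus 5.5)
Since Theorem~\ref{(C,F)-iso} is quoted from \cite{danilenkoClassificationRankoneActions2025}, I would only reconstruct its two implications. The guiding idea is that conditions (1)--(2) and (4)--(5) say that a copy of the tower $F_{k_n}$ sits inside $F'_{l_n}$ at the positions $J'_n$, and a copy of $F'_{l_n}$ sits inside $F_{k_{n+1}}$ at the positions $J_{n+1}$. Conditions (3) and (6) say that composing the two embeddings approximately recovers the genuine inclusion of $F_{k_n}$ into $F_{k_{n+1}}$, namely at the positions $C_{k_n}+\dots+C_{k_{n+1}-1}$, up to a summable error. So the plan is an approximate-intertwining argument in the ``if'' direction and a tower-approximation argument in the ``only if'' direction.

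\emph{Sufficiency.} For each $n$ I define a partial map
\[
\phi_n\colon \iota^{(k_n)}\bigl(F_{k_n}\times\{(c_{k_n},\dots,c_{k_{n+1}-1})\colon c_{k_n}+\dots+c_{k_{n+1}-1}\in J'_n+J_{n+1}\}\times\cdots\bigr)\to X'.
\]
A point of the domain has the form $f+j'+j$ with $f\in F_{k_n}$, $j'\in J'_n$, $j\in J_{n+1}$, and the splitting is unique by (2) and (5). The map sends this point to the point of $X'$ with coordinate $f+j'\in F'_{l_n}$, where the tail coordinates are obtained by reading the next block, again through the $J$'s.

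Concretely, a point $x$ is expanded block by block. Its coordinate in $C_{k_m}+\dots+C_{k_{m+1}-1}$ is, for all large $m$ and almost every $x$, an element of $J'_m+J_{m+1}$. This follows from (3) and Borel--Cantelli, since under $\mu$ these block coordinates are independent and uniformly distributed. On that set I decompose each block coordinate as $j'_m+j_{m+1}$ and regroup the pieces as $j_{m+1}+j'_{m+1}$. By (6), again with Borel--Cantelli, for almost every image point this regrouped element lies in $C'_{l_m}+\dots+C'_{l_{m+1}-1}$ for all large $m$. This defines $\phi(x)\in X'$ eventually and consistently.

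Three properties then need checking. First, $\phi$ intertwines $T$ and $T'$: at a fixed level, adding $1$ to $f$ inside $F_{k_n}$ is the same as adding $1$ to $f+j'$ inside $F'_{l_n}$, because of (1). Second, $\phi$ is measure preserving, because uniform counting measure on cylinders is carried to uniform counting measure. Third, $\phi$ is invertible, because the symmetric construction $X'\to X$ is an inverse almost everywhere.

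\emph{Necessity.} Let $\phi$ be an isomorphism. Since the partitions into levels of the towers generate, $\phi([0]_k)$ is approximated arbitrarily well by unions of levels $[A]_l$ of a sufficiently tall tower in $X'$. By ergodicity and the rank-one structure, I choose $J'$ to be the set of positions $j\in F'_l$ for which the column $\phi([0]_k)$ covers the levels $j,\dots,j+|F_k|-1$ up to a small relative error. After a small correction the translates $F_k+J'$ are disjoint and contained in $F'_l$, which gives (1) and (2). The same procedure applied to $\phi^{-1}$ produces the sets $J_{n+1}$.

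Choosing $k_n$ and $l_n$ to increase fast enough, the approximation errors at step $n$ can be made smaller than $2^{-n}$. The symmetric difference in (3) is exactly the proportion of $k_{n+1}$-level copies of the base that are misaligned after composing the two approximations, so it is bounded by these errors, and likewise for (6). This makes both series converge.

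I expect the main obstacle to be this necessity direction: one has to make the sets $J$ \emph{exactly} satisfy the disjointness and containment conditions (2) and (5), not merely approximately. Otherwise the errors compound across levels when passing from $F_{k_n}$ to $F'_{l_n}$ to $F_{k_{n+1}}$. To get exact conditions I would discard boundary and defective positions, and I would use the summability of the spacer proportions to show that discarding them costs only a summable error.
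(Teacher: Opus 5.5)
The paper gives no proof of this statement. It is imported from Danilenko--Vieprik, and only the sufficiency direction is ever used: in Lemma~\ref{lem:ProperlyChosenCuttingAndStacking1} it is applied with $k_n=l_n=n$, $J'_n=\{0\}$, $J_{n+1}=C_n$, and in Lemma~\ref{lem:ProperlyChosenCuttingAndStacking2} in the same way. So there is no in-paper route to compare yours with. On its own terms your plan is the standard one and is sound in outline. Your sufficiency argument is the same mechanism the paper uses to build $\varphi^o,\varphi^e$ in the proof of Theorem~\ref{TheoremA}: regroup coordinates, apply Borel--Cantelli twice, read off a point of the other inductive limit, and invert by the symmetric construction.

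Three steps are asserted rather than proved. Write $D_m=C_{k_m}+\dots+C_{k_{m+1}-1}$, $D'_m=C'_{l_m}+\dots+C'_{l_{m+1}-1}$, and $[\,\cdot\,]'_l$ for cylinders of the primed construction.

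First, your second Borel--Cantelli step does not follow from (6) alone. The element $j_{m+1}+j'_{m+1}$ is assembled from two independent uniform blocks $b_m\in D_m$ and $b_{m+1}\in D_{m+1}$. The bad event therefore has measure at most $|J'_m|\,|J_{m+2}|\,|(J_{m+1}+J'_{m+1})\setminus D'_m|/(|D_m|\,|D_{m+1}|)$. To dominate this by a constant times the $m$-th term of (6), you need $|J'_m|\,|J_{m+1}|\le(1+o(1))|D_m|$ and $|J_{m+1}|\,|J'_{m+1}|\ge(1-o(1))|D'_m|$. These come from (3), (6) and the injectivity of the sum maps, which follows from (2) and (5).

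Second, ``counting measure is carried to counting measure'' needs an argument. One option is to bound $\mu(\phi^{-1}(Z))\le\mu'(Z)$ on cylinders $Z$, as at the end of the proof of Theorem~\ref{TheoremA}. Another is to note that $\phi_*\mu$ is $T'$-invariant. Since the $F'_l$ are intervals, the orbit relation of $T'$ is the tail relation of the $(C,F)$-space, whose only invariant probability is $\mu'$.

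Third, in the necessity direction, the quantitative input behind your ``small correction'' is not spacer summability, and ergodicity plays no role. Suppose $\mu'(\phi([0]_k)\,\Delta\,[A]'_l)<\eta$. For $0<d<|F_k|$, the disjointness of $[0]_k$ and $T^{-d}[0]_k$ gives $|A\cap(A-d)|\,\mu'([0]'_l)\le 2\eta$. So discarding the positions of $A$ within distance $<|F_k|$ of another one, and those near the top of $F'_l$, costs relative mass $O(\eta|F_k|^2+|F_k|^2\mu'([0]'_l))$. This is small because $k$ is fixed before $\eta$ and $l$ are chosen.

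Now take relative errors $\epsilon_n$ for $\phi([0]_{k_n})\approx[J'_n]'_{l_n}$ and $\delta_n$ for $\phi^{-1}([0]'_{l_n})\approx[J_{n+1}]_{k_{n+1}}$. The identities $[0]_{k_n}=[D_n]_{k_{n+1}}$ and $\phi^{-1}([J'_n]'_{l_n})=\bigsqcup_{j'\in J'_n}T^{j'}\phi^{-1}([0]'_{l_n})$ then bound the $n$-th term of (3) by $\epsilon_n+(1+\epsilon_n)\delta_n$. This is the precise form of your ``misaligned copies'' count, and (6) is handled symmetrically.
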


We warn the reader that our notational conventions differ slightly from the ones of Danilenko and Vieprik---the sequence $(C_n)$ starts from index $0$, so that we have $F_n+C_n\subset F_{n+1}$ instead of $F_n+C_{n+1}\subset F_{n+1}$.

\section{Proof of Theorem~\ref{TheoremA}}\label{sec:ProofThA}

\subsection{Preliminary work on the cutting and spacing parameters}

\begin{lemma}\label{lem:sublinear}
    Let $\omega\colon\R_+\to\R_+$ be a non-decreasing sublinear map. Then there exists a non-decreasing map $\zeta\colon\R_+\to\R_+$ satisfying the following properties:
    \begin{itemize}
        \item $\zeta(x)\geq 1$ for every $x\in\R_+$;
        \item $\zeta$ is sublinear;
        \item $\lim_{x\to\infty}\zeta(x)=\infty$;
        \item $\lim_{x\to\infty}(x^{-1}\cdot\omega(x\zeta(x))\cdot\zeta(x))=0$.
    \end{itemize}
    In particular, we also have 
    \[
    \lim_{x\to\infty}\frac{\omega(x)\cdot\zeta(x)}{x}=0.\]
\end{lemma}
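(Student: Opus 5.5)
We construct $\zeta$ as a slowly growing step function adapted to the decay of $\varepsilon(x)\coloneq \sup_{y\geq x}\omega(y)/y$, which is non-increasing and tends to $0$ because $\omega$ is sublinear. The basic inequality is that for $x\geq 1$ and $z\geq 1$, since $xz\geq x$,
\[
\frac{\omega(xz)}{xz}\leq \varepsilon(x),\qquad\text{hence}\qquad x^{-1}\,\omega(x\zeta(x))\,\zeta(x)\leq \varepsilon(x)\,\zeta(x)^2 .
\]
It is therefore enough to find a non-decreasing $\zeta\geq 1$ with $\zeta(x)\to\infty$, $\zeta$ sublinear, and $\varepsilon(x)\zeta(x)^2\to 0$.

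Choose $\zeta(x)\coloneq \min\bigl(\varepsilon(x)^{-1/4},\,1+\log(1+x)\bigr)$, with the convention $\varepsilon^{-1/4}=\infty$ when $\varepsilon(x)=0$, and set $\zeta\coloneq 1$ on $[0,1)$ if one wishes to avoid trouble near $0$. We then check the required properties in turn.

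\emph{Monotonicity.} Since $\varepsilon$ is non-increasing, $\varepsilon^{-1/4}$ is non-decreasing. The function $1+\log(1+x)$ is non-decreasing as well, so their minimum is non-decreasing.

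\emph{Lower bound.} We have $1+\log(1+x)\geq 1$. For $\varepsilon(x)^{-1/4}\geq 1$ we need $\varepsilon\leq 1$, which need not hold. We therefore replace $\zeta$ by $\max(1,\zeta)$, which is still non-decreasing and still tends to infinity.

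\emph{Divergence.} Both terms tend to $\infty$, since $\varepsilon(x)\to 0$ and $\log(1+x)\to\infty$.

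\emph{Sublinearity.} This follows from $\zeta(x)\leq \max\bigl(1,1+\log(1+x)\bigr)$.

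\emph{Main estimate.} For $x$ large, $\varepsilon(x)\leq 1$, so $\zeta(x)\leq \varepsilon(x)^{-1/4}$. This gives $\varepsilon(x)\zeta(x)^2\leq \varepsilon(x)^{1/2}\to 0$. If $\varepsilon(x)=0$ the bound $\omega(xz)\leq \varepsilon(x)xz$ gives $0$ directly.

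The final claim follows because $\zeta\geq 1$ and $\omega$ is non-decreasing, so $\omega(x)\leq\omega(x\zeta(x))$, and hence
\[
\frac{\omega(x)\zeta(x)}{x}\leq x^{-1}\,\omega(x\zeta(x))\,\zeta(x)\to 0.
\]

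The only delicate point is the basic inequality: it requires $\varepsilon$ to be defined as a supremum over $y\geq x$, so that it controls $\omega(xz)/(xz)$ at the larger argument $xz$. We also need $\varepsilon$ to be finite, which holds since $\omega(y)/y\to 0$ and $\omega(y)/y\leq\omega(R)/x$ on $[x,R]$ by monotonicity of $\omega$. Measurability is not an issue, since $\zeta$ is monotone.
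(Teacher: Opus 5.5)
Your proof is correct, but it uses a different construction from the paper's.

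\textbf{The paper's route.} The paper argues diagonally. Using sublinearity, it picks an increasing sequence $x_1<x_2<\dots$ with $(n+1)\,\omega(x(n+1))/x\le 1/(n+1)$ for all $x\ge x_n$. It then sets $\zeta(x_n)=n$, interpolates linearly, and gets sublinearity of $\zeta$ by letting the $x_n$ grow fast enough. The limit follows because $\zeta\le n+1$ on $[x_n,x_{n+1}]$ and $\omega$ is non-decreasing.

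\textbf{Your route.} You pass to the monotone envelope $\varepsilon(x)=\sup_{y\ge x}\omega(y)/y$. You observe that $x^{-1}\omega(x\zeta(x))\zeta(x)=\frac{\omega(x\zeta(x))}{x\zeta(x)}\,\zeta(x)^2\le\varepsilon(x)\zeta(x)^2$ whenever $\zeta\ge 1$. You then write down $\zeta=\max\bigl(1,\min(\varepsilon^{-1/4},1+\log(1+x))\bigr)$ explicitly.

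\textbf{Comparison.} Both arguments control the same quantity: the paper's defining condition is exactly $\frac{\omega(x(n+1))}{x(n+1)}(n+1)^2\le\frac{1}{n+1}$. Your version replaces the inductive choice of thresholds with a closed formula, which buys the following.
\begin{itemize}
\item Monotonicity, divergence and sublinearity of $\zeta$ are immediate.
\item You get an explicit rate $\varepsilon(x)^{1/2}$.
\item The verification the paper dismisses as ``not hard'' is carried out in full.
\end{itemize}
Monotonicity of $\omega$ enters your argument only through finiteness of $\varepsilon$ and the final ``in particular'' claim. The logarithmic cap is really needed only when $\omega\equiv 0$. Otherwise $\omega(y_0)=c>0$ for some $y_0$, so $\varepsilon(x)\ge c/x$ for $x\ge y_0$ and $\varepsilon^{-1/4}$ is already $O(x^{1/4})$.

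The paper's construction gives a continuous piecewise-linear $\zeta$ and does not need the envelope. The statement does not ask for continuity, so your discontinuous $\zeta$ loses nothing.

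One minor wording slip: your opening sentence calls $\zeta$ a step function, which the function you actually define is not. This does not affect the argument.
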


\begin{proof}
    By sublinearity of $\omega$, we can inductively build an increasing sequence $x_1<x_2<\dots$ of real numbers such that
    \[
        \frac{\omega (x (n+1))\cdot (n+1)}{x}\leq \frac{1}{n+1}.
    \]
    for every $n\geq 1$ and $x\geq x_n$. We then set $\zeta(x_n)= n$ and $\zeta(x)=1$ for every $x\in [0,x_1]$, and we extend $\zeta$ linearly on $\R_+$. If we choose $(x_n)_{n\geq 1}$ to grow fast enough to $\infty$, then $\zeta$ is sublinear. It is not hard to prove that
    \[
        \lim_{x\to\infty}\frac{\omega(x\zeta(x))\cdot\zeta(x)}{x}=0,
    \]
    using the fact that $\omega$ is non-decreasing.
\end{proof}

\begin{lemma}\label{lem:ProperlyChosenCuttingAndStacking1}
    Let $T\in\aut$ be a rank-one system, and let $\zeta\colon\R_+\to\R_+$ be a sublinear map such that $\lim_{x\to\infty}\zeta(x)=\infty$. We consider a cutting-and-stacking construction of $T$ with parameters $k'_n,q'_n,\sigma'_{n,i}$ satisfying
    \begin{itemize}
        \item $\sum_{n\geq 1}(\sigma'_n/k'_{n+1})\leq 1/2$;
        \item $\zeta(k'_n)>4$ and $k'_n/\zeta(k'_n)>1$ for every $n\geq 1$;
        \item the sequences $(1/\zeta(k'_n))_n$ and $(1/q_n)_n$ are summable.
    \end{itemize}
    Then there exists another cutting-and-stacking construction of $T$ with parameters $k_n,q_n,\sigma_{n,i}$ satisfying the following properties for every $n\geq 1$:
    \begin{enumerate}
        \item $k_{n}= k'_n-\left\lceil k'_{n}/\zeta(k'_{n})\right\rceil$;
        \item $\sigma_{n,i}\geq k'_{n}/\zeta(k'_{n})$ for every $i\in\{1,\dots,q_n-1\}$.
    \end{enumerate}
\end{lemma}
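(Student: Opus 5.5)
The plan is to obtain the new construction by \emph{trimming} the towers $\mathcal{R}'_n$ of the given one. Set $m_n\coloneq\lceil k'_n/\zeta(k'_n)\rceil$ and $k_n\coloneq k'_n-m_n$, which is positive because $k'_n/\zeta(k'_n)>1$ and $\zeta(k'_n)>4$. The top $m_n$ levels of each copy of $\mathcal{R}'_n$ will be declared spacers. Then, inside $\mathcal{R}'_{n+1}$, consecutive copies of the bottom $k_n$ levels of $\mathcal{R}'_n$ are separated by at least $m_n\geq k'_n/\zeta(k'_n)$ levels: the trimmed top of the previous copy, plus the old spacers. This gives property~(2) for the interior spacing parameters $i\in\{1,\dots,q_n-1\}$. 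Property~(1) is built in.

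The difficulty is that a cutting-and-stacking construction requires $\mathcal{R}_n\subseteq\mathcal{R}_{n+1}$ with full copies. Some copies of $\mathcal{R}'_n$ lie, wholly or partly, in the top $m_{n+1}$ levels of $\mathcal{R}'_{n+1}$, which are themselves trimmed. To handle this I will shrink the bases. I call a point $x$ of the base $B'_n$ \emph{good} if, for every $m>n$, the copy of $\mathcal{R}'_n$ containing $x$ inside $\mathcal{R}'_m$ is contained in a copy of $\mathcal{R}'_{m-1}$ whose first $k_{m-1}$ levels lie within the bottom $k_m$ levels of $\mathcal{R}'_m$. I then set $B_n$ to be the set of good points and take $\mathcal{R}_n=(T^kB_n)_{0\le k<k_n}$. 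With this definition:
\begin{itemize}
\item Whether $x$ is good at stage $n$ is decided by the index of the copy of $\mathcal{R}'_n$ in $\mathcal{R}'_{n+1}$ that contains $x$, together with goodness at stage $n+1$.
\item Hence $B_n$ is partitioned into $q_n$ pieces, where $q_n$ is the number of copies of $\mathcal{R}'_n$ whose first $k_n$ levels fit in the bottom $k_{n+1}$ levels of $\mathcal{R}'_{n+1}$.
\item Each such piece is mapped by the appropriate power of $T$ onto (a translate of) $B_{n+1}$.
\end{itemize}
So items (a)--(e) of Definition~\ref{defr1} hold, with spacers consisting of the old spacers, the trimmed top levels, and the discarded final copies. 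I would check these combinatorial identities first.

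The main step is showing that $B_n$ has positive measure and that $\bigcup_n\mathcal{R}_n$ has full measure, which is where the hypotheses enter.
\begin{itemize}
\item At stage $m$, the proportion of $\mathcal{R}'_{m}$ lost to trimming is $m_m/k'_m\lesssim 1/\zeta(k'_m)$.
\item The loss from the final copies of $\mathcal{R}'_{m-1}$ cut off by the trimmed top is at most one or two copies out of $q'_{m-1}$. This is bounded by $m_m/k'_m$ plus about $1/q'_{m-1}$, and I read the summability hypothesis on $(1/q_n)$ as referring to the given parameters $q'_n$.
\item Both sequences are summable. Together with $\sum\sigma'_n/k'_{n+1}\le 1/2$, this shows that the measure of the non-good part of $\mathcal{R}'_n$ is bounded by a tail sum $\sum_{m>n}(\cdots)$.
\end{itemize}
That tail sum tends to $0$. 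Consequently $\mu(\bigcup_{k<k_n}T^kB_n)\to 1$, and the bases have positive measure for all large $n$. If the tail bound is not below $1$ at early stages, I would instead first pass to a subsequence of the original towers, which does not change $k'_n$ along the chosen indices. This would make the tails small before trimming, and only then relabel. Arranging this without disturbing the stated form $k_n=k'_n-\lceil k'_n/\zeta(k'_n)\rceil$ is the point I expect to be most delicate.

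Finally, generation (item (f)) follows from two facts. Every level of $\mathcal{R}_n$ is the intersection of a level of $\mathcal{R}'_n$ with $\bigcup_{k<k_n}T^kB_n$. Every level of $\mathcal{R}'_p$, for $p\le n$, is up to a set of measure tending to $0$ a union of levels of $\mathcal{R}_n$. Hence the $\sigma$-algebra generated by the $\mathcal{R}_n$ contains the one generated by the $\mathcal{R}'_n$, which is everything. The summability condition on $\sigma_n/k_{n+1}$ for the new parameters is then automatic, since $T$ preserves a probability measure and the towers exhaust the space.
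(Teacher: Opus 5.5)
Your argument is correct, but it takes a different route from the paper. The paper never works inside $X$. It builds a new abstract $(C,F)$-construction by deleting the last $\lceil k'_n/\zeta(k'_n)\rceil$ elements of $F'_n$ and the last $N_n=\lceil 4q'_n/\zeta(k'_{n+1})\rceil$ elements of $C'_n$, and checks the interleaving $F_n\subset F'_n$, $F'_n+C_n\subset F_{n+1}$. It then invokes the Danilenko--Vieprik isomorphism criterion (Theorem~\ref{(C,F)-iso}) with $J'_n=\{0\}$ and $J_{n+1}=C_n$. The only non-trivial series there is essentially $\sum_n N_n/q'_n$, which is where summability of $1/\zeta(k'_n)$ and $1/q'_n$ enters; so your reading of the hypothesis as concerning $q'_n$ matches the paper's use. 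This interleaving is also why the paper requires \emph{full} copies of $F'_n$ to fit inside $F_{n+1}$, whereas you only need the first $k_n$ levels of each retained copy to fit. Your trimming realizes the new towers as subsets of the original space, so the isomorphism is the identity and no classification theorem is needed; the price is checking exhaustion and generation by hand, which you do correctly. The paper's proof is shorter but rests on that black box. Property~(2) comes from the same observation in both: the gap after a retained copy contains its discarded top $\lceil k'_n/\zeta(k'_n)\rceil$ levels.

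The point you flag as delicate is not an obstacle, and you should drop the proposed fix. Passing to a subsequence of the given towers would change which $k'_n$ occur, while the statement demands~(1) for every $n\geq 1$ of the given construction. Your recursive description gives $B_n=\bigsqcup_{c\in C_n}T^cB_{n+1}$ exactly, where $C_n$ is the set of retained copy positions, so
\[
\mu(B_n)=\mu(B'_n)\prod_{m\geq n}\frac{|C_m|}{q'_m}.
\]
Since $1-|C_m|/q'_m\lesssim 1/\zeta(k'_{m+1})+1/q'_m$ is summable, this product is positive for every $n$ as soon as each $C_m$ is non-empty. No additive tail bound ever has to be below $1$. The same formula gives exhaustion directly:
\[
\mu\Bigl(\bigcup_{k<k_n}T^kB_n\Bigr)=\frac{k_n}{k'_n}\cdot k'_n\mu(B'_n)\cdot\prod_{m\geq n}\frac{|C_m|}{q'_m}\longrightarrow 1.
\]
Non-emptiness of the retained sets (indeed $|C_m|\geq 2$) still needs checking. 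It follows from $\zeta>4$ and $\sigma'_m\leq k'_{m+1}/2$ once $q'_m$ is moderately large; the paper's choice of $N_n$ needs the same kind of check.
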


\begin{proof}
    We consider the sets $C'_n$ and $F'_n$ in the $(C,F)$-construction associated to the parameters $k'_n,q'_n,\sigma'_{n,i}$. Setting $N_n=\lceil4q'_n/\zeta(k'_{n+1})\rceil$, we denote by $C_n$ the set obtained from $C'_n$ by removing the last $N_n$ elements ($C'_n$ has more than $N_n$ elements since $\zeta(k'_n)>4$). We also define $F_n$ for $n>0$ ($F_0$ is always $\{0\}$) by removing the last $\lceil k'_n/\zeta(k'_{n})\rceil$ elements from $F'_n$. We have
    \[
    N_nk'_n\geq \frac{4q'_nk'_n}{\zeta(k'_{n+1})}=\frac{4(k'_{n+1}-\sigma'_n)}{\zeta(k'_{n+1})}=4\left (1-\frac{\sigma'_n}{k'_{n+1}}\right)\frac{k'_{n+1}}{\zeta(k'_{n+1})}\geq 2\frac{k'_{n+1}}{\zeta(k'_{n+1})}\geq \left\lceil\frac{k'_{n+1}}{\zeta(k'_{n+1})}\right\rceil,
    \]
    which implies $F'_n+C_n\subset F_{n+1}$ (the last inequality comes from the fact that $k'_{n+1}/\zeta(k'_{n+1})>1$). In particular we have $F_n+C_n\subset F_{n+1}$, so the sets $C_n$ and $F_n$ provide a well-defined $(C,F)$-construction for which we denote by $k_n,q_n,\sigma_{n,i}$ the associated parameters.
    
    We immediately get $k_n=k'_n-\lceil k'_n/\zeta(k'_n)\rceil$. Given $i\in\{1,\dots,q_n\}$, $\sigma_{n,i}$ is the gap between the last element of $F_n+c_i$ and the first element of $F_n+c_{i+1}$, where $c_1<c_2<\dots <c_{q_n}$ is an enumeration of the elements of $C_n$. From the fact that this gap contains $(F'_n\setminus F_n)+c_i$, we deduce that $\sigma_{n,i}\geq k'_n/\zeta(k'_n)$.
    
    Using our assumptions, one easily checks that the assumptions of Theorem~\ref{(C,F)-iso} (with matching notation) are satisfied for
    \[
        k_n=l_n=n,\quad J'_n=\{0\},\quad J_{n+1}=C_{n}
    \]
    when $n>0$, and
    \[
        k_0=l_0=0,\quad
        J_0=J'_0=J_1=\{0\}.
    \]
    This implies that the two $(C,F)$-constructions give isomorphic rank-one systems, so $T$ admits a cutting-and-stacking construction with parameters $k_n,q_n,\sigma_{n,i}$.
\end{proof}

\begin{remark}\label{rem:subsequence}
    The prerequisites of Lemma~\ref{lem:ProperlyChosenCuttingAndStacking1} can always be assumed by skipping steps in a given construction. Indeed, when considering a subsequence $(\mathcal{R}_{n_j})$ of Rokhlin towers, the new height and cutting sequences are $(K'_j\coloneq k'_{n_j})_j$ and $(Q'_j\coloneq q'_{n_j}q'_{n_j+1}\dots q'_{n_{j+1}-1})_j$, and the new $j$-th spacing parameter is equal to
    \[\Sigma'_{j}\coloneq\sum_{n_j\leq n<n_{j+1}}{\sigma'_n\prod_{n< k< n_{j+1}}{q'_k}}.\]
    Therefore, we get
    \[\frac{\Sigma'_j}{K'_{j+1}}\leq\sum_{n_j\leq n<n_{j+1}}{\frac{\sigma_n}{k'_{n+1}}}.\]
    From this we also deduce that the assumptions of the lemma are stable under skipping steps. This is the reason why we keep the strong assumption on $\sum{(\sigma'_n/k'_{n+1})}$, whereas we only use the fact that $\sigma'_n/k'_{n+1}\leq 1/2$ for every $n\geq 1$.
\end{remark}

\subsection{Overview of the construction}\label{sec:overview}

Let $T^e\colon X^e\to X^e$ and $T^o\colon X^o\to X^o$ be two rank-one systems, with parameters $k'_{2n},q'_{2n},\sigma'_{2n,i}$ and $k'_{2n-1},q'_{2n-1},\sigma'_{2n-1,i}$. Note that the sets and parameters are now indexed on the even (resp.~odd) integers, so moving from one step of the construction to the next equates to passing from index $2n$ to index $2n+2$ (resp.~$2n+1$ to $2n+3$).

Let $\omega\colon\R_+\to\R_+$ be a sublinear map and $\zeta\colon\R_+\to\R_+$ be a map as in Lemma~\ref{lem:sublinear}. Note that, using~\cite[Lemma~2.12]{carderiBelinskayaTheoremOptimal2023}, we can assume without loss of generality that $\omega$ is increasing. Skipping steps in the cutting-and-stacking construction, we assume that $\sum_{n\geq 1}{(\sigma'_n/k'_{n+1})}\leq 1/2$ as in Lemma~\ref{lem:ProperlyChosenCuttingAndStacking1}. We set $k_n\coloneq k'_n-\left\lfloor k'_n/\zeta(k'_n)\right\rfloor$. We can also assume without loss of generality (see Remark~\ref{rem:subsequence}) that:
\begin{itemize}
    \item $k_{n+2}>k_{n+1}>k_n$ and $k_{n+2}>k_{n+1}k_n$ for every $n\geq 0$, for the construction presented in the next section to be well-defined;
    \item all the assumptions of Lemma~\ref{lem:ProperlyChosenCuttingAndStacking1} hold;
    \item the series $\sum_n (k_{n-1}k_{n-2}/k_n)$ converges, as in Lemma~\ref{lem:ProperlyChosenCuttingAndStacking2} stated below;
    \item other summability conditions appearing later, only involving $(k_n)$ and $(k'_n)$, are satisfied (details will be provided in the final proof of Theorem~\ref{TheoremA}).
\end{itemize}

By Lemma~\ref{lem:ProperlyChosenCuttingAndStacking1}, $T^e$ (resp.~$T^o$) admits a cutting-and-stacking construction with parameters $(k_{2n})_{n\geq 0}$, $(q_{2n})_{n\geq 0}$, $(\sigma_{2n,i})_{n\geq 0}$ (resp.~$(k_{2n-1})_{n\geq 0}$, $(q_{2n-1})_{n\geq 0}$, $(\sigma_{2n-1,i})_{n\geq 0}$) satisfying $\sigma_{n,i}\geq k'_n/\zeta(k'_n)$ for every $n\geq 1$. By convention, $k_0=k_{-1}=1$.

We denote by $C_{2n},F_{2n}$ and $C_{2n-1},F_{2n-1}$ the sets involved in the $(C,F)$-constructions associated with $k_{2n},q_{2n},\sigma_{2n,i}$ and $k_{2n-1},q_{2n-1},\sigma_{2n-1,i}$, respectively. Recall that $F_n$ is the set $\{0,1,\dots,k_n-1\}$. For every $i\in\{0,\dots,d_n-1\}$, with $d_n\coloneq \lfloor k_n/(k_{n-1}k_{n-2})\rfloor$, we set $I_{n,i}\coloneq ik_{n-1}k_{n-2}+[k_{n-1}k_{n-2}]$ and $I_{n,\ast}=F_n\setminus(I_{n,0}\sqcup\dots\sqcup I_{n,d_n-1})$ of cardinality denoted by $r_n$ (note that $r_n<k_{n-1}k_{n-2}$).

\begin{figure}[ht]
		\centering
		\includegraphics[width=0.9\linewidth]{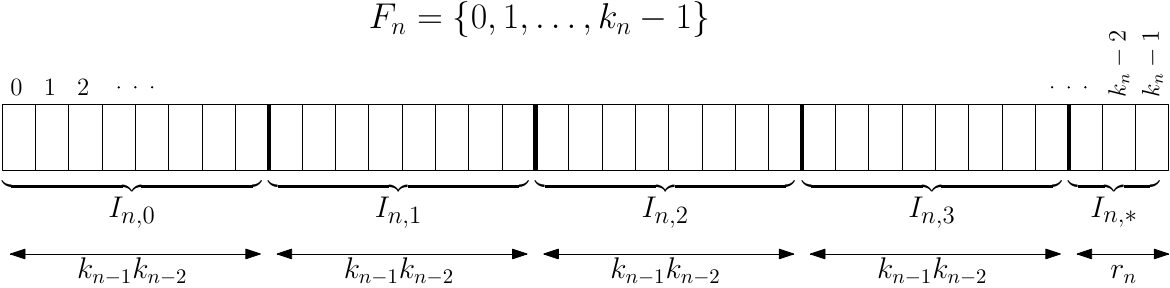}
		\caption{In this example, $k_n=35$, $k_{n-1}k_{n-2}=8$, $d_n=4$, $r_n=3$}
	\end{figure}

The following useful lemma asserts that we can remove the sets $F_n+c$, where $c\in C_n$, which are in the wrong place for our construction.

\begin{lemma}\label{lem:ProperlyChosenCuttingAndStacking2}
    Assume that $((k_{n-1}k_{n-2})/k_n)_n$ is summable. Then we can assume without loss of generality that for every $i\in [d_{n+1}]$, the set $F_{n-1}+C_{n-1}$ does not intersect the last $k_{n-1}r_{n}$ elements of $I_{n+1,i}$.
\end{lemma}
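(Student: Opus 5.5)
The plan is to modify only the sets $C_{n-1}$, keeping every $F_n=\{0,\dots,k_n-1\}$ unchanged, and then to check with Theorem~\ref{(C,F)-iso} that the new $(C,F)$-construction still defines $T$. Since the heights $k_n$ do not change, the quantities $d_n$, $r_n$ and the intervals $I_{n,i}$ do not change either. So the forbidden region is a fixed set determined by $(k_n)$ alone, and removing elements of $C_{n-1}$ at one step does not move the forbidden regions at other steps. I read the forbidden region as the union $B_n$ of the last $k_{n-1}r_n$ elements of the blocks $I_{n+1,i}$, $i\in[d_{n+1}]$. I will locate it inside the level structure in which $F_{n-1}+C_{n-1}$ lives by reducing positions through the copies $F_n+c$, $c\in C_n$. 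I then define
\[
\tilde C_{n-1}\coloneq\{c\in C_{n-1}\mid (F_{n-1}+c)\cap B_n=\emptyset\},
\]
and apply this for every $n$ simultaneously.

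\emph{Step 1: well-definedness and earlier properties.} Removing elements of $C_{n-1}$ keeps $F_{n-1}+\tilde C_{n-1}\subset F_n$ and keeps the translates disjoint, so the new sets define a valid $(C,F)$-construction. I must check $|\tilde C_{n-1}|>1$; this will follow from the density estimate below, possibly after skipping steps as in Remark~\ref{rem:subsequence}. Removing copies only merges gaps, so every new spacing parameter $\sigma_{n-1,i}$ is at least the corresponding old one. Hence the bound $\sigma_{n,i}\ge k'_n/\zeta(k'_n)$ from Lemma~\ref{lem:ProperlyChosenCuttingAndStacking1} survives, and $k_n$ is unchanged by construction. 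The finiteness of the measure will come out of the same counting.

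\emph{Step 2: counting removed elements.} The region $B_n$ is a union of at most $d_{n+1}$ intervals, each of length $k_{n-1}r_n<k_{n-1}^2k_{n-2}$. Its proportion inside each block of length $k_nk_{n-1}$ is therefore at most $r_n/k_n<k_{n-1}k_{n-2}/k_n$. A translate $F_{n-1}+c$ meeting one of these intervals either lies inside it or contains one of its endpoints. So the number of removed $c$ per interval is at most $r_n+2$, and relative to $|C_{n-1}|\approx k_n/k_{n-1}$ this gives
\[
\frac{|C_{n-1}\setminus\tilde C_{n-1}|}{|C_{n-1}|}\lesssim \frac{k_{n-1}k_{n-2}}{k_n}+\frac{k_{n-1}}{k_n},
\]
up to a factor $(1-\sigma_{n-1}/k_n)^{-1}\le 2$. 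This is summable by hypothesis.

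\emph{Step 3: isomorphism.} I apply Theorem~\ref{(C,F)-iso} exactly as in the proof of Lemma~\ref{lem:ProperlyChosenCuttingAndStacking1}: $k_n=l_n=n$, $J'_n=\{0\}$, $J_{n+1}=\tilde C_n$. Conditions (1), (2), (4) and (5) are immediate because $\tilde C_n\subset C_n$ and the $F$'s agree. Conditions (3) and (6) reduce to the summability of $|C_n\,\Delta\,\tilde C_n|/|C_n|$, which is Step 2.

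The main obstacle I expect is Step 2, specifically pinning down precisely how $B_n$ sits relative to $F_{n-1}+C_{n-1}$ through the copies $F_n+C_n$. The worry is that a single bad interval could hit a positive proportion of one copy of $F_n$, so the removed mass must be averaged over all copies rather than bounded copy by copy. I would first try to show that each copy $F_n+c$ meets $B_n$ in a set of proportion at most $O(k_{n-1}k_{n-2}/k_n)$, by using $k_{n+1}>k_nk_{n-1}$ and the periodicity of the $I_{n+1,i}$.
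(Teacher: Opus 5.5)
Your skeleton is the paper's proof: delete from $C_{n-1}$ every $c$ whose copy $F_{n-1}+c$ meets the forbidden region, bound the deletions by $d_{n+1}(r_n+2)$, and conclude with Theorem~\ref{(C,F)-iso} using $J'_n=\{0\}$, $J_{n+1}=\tilde C_n$, as in Lemma~\ref{lem:ProperlyChosenCuttingAndStacking1}. Your remarks that the $k_n$ (hence $d_n,r_n,I_{n,i}$) are unchanged, and that deleting copies only enlarges spacing parameters, are correct and worth keeping.

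However, you have misread the indexing, and this causes both your ``main obstacle'' and the wrong size $|C_{n-1}|\approx k_n/k_{n-1}$. In Section~\ref{sec:overview} the two systems are interleaved. $F_{n-1}$ and $F_{n+1}$ belong to the same system and $F_n$ to the other, so the relation is $F_{n-1}+C_{n-1}\subset F_{n+1}$. Likewise $F_n+C_n\subset F_{n+2}$, not $F_{n+1}$. Hence $F_{n-1}+C_{n-1}$ and the forbidden region both lie in $F_{n+1}$, and there is nothing to reduce through copies $F_n+c$; those copies live in $F_{n+2}$ and play no role. Also $|C_{n-1}|=(k_{n+1}-\sigma_{n-1})/k_{n-1}$. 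With this correction your count gives directly
\[
\frac{|C_{n-1}\setminus\tilde C_{n-1}|}{|C_{n-1}|}\le\frac{d_{n+1}(r_n+2)\,k_{n-1}}{k_{n+1}-\sigma_{n-1}}\le\frac{k_{n+1}}{k_{n+1}-\sigma_{n-1}}\cdot\frac{2k_{n-1}k_{n-2}}{k_n}.
\]
This is exactly the paper's estimate. The prefactor is bounded because $\sigma_{n-1}/k_{n+1}\to0$, and the same correction fixes your factor $(1-\sigma_{n-1}/k_n)^{-1}$ and the inclusion in Step~1.

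Under your reading, the obstacle would be fatal, and the averaging you propose cannot repair it. $C_{n-1}$ is a single set reused in every copy $F_n+c$. So an element $c'$ would have to be removed as soon as $F_{n-1}+c'$ met the reduced forbidden region in any one of the roughly $k_{n+1}/k_n$ copies. The union of those reduced regions need not be small in $F_n$. Your hoped-for per-copy bound $O(k_{n-1}k_{n-2}/k_n)$ also fails in general: a single forbidden interval of length $k_{n-1}r_n$ can sit inside one copy. So do not try to prove that estimate. Drop the reduction step and your last paragraph, and correct the indexing; then Steps~1--3 form a complete proof.
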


\begin{proof}
    If $F_{n-1}+C_{n-1}$ does not satisfy the property described in the statement of the lemma, then we consider the maximal subset $C'_{n-1}\subset C_{n-1}$ such that $F_{n-1}+C'_{n-1}$ does. To obtain $C'_{n-1}$ we have to remove at most $d_{n+1}(r_n+1)$ elements from $C_{n-1}$. We have
    \[
        d_{n+1}(r_n+2)\leq\frac{2k_{n+1}k_{n-2}}{k_n}=|C_{n-1}|\left (1+\frac{\sigma_{n-1}}{q_{n-1}k_{n-1}}\right)\frac{2k_{n-2}k_{n-1}}{k_n}.
    \]
    Since $\sigma_{n-1}/(q_{n-1}k_{n-1})\leq \sigma_{n-1}/(q_0\dots q_{n-1})$, we know that $(\sigma_{n-1}/(q_{n-1}k_{n-1}))_n$ is summable and therefore bounded above. Finally, $(k_{n-1}k_{n-2}/k_n)_n$ is also summable, so we deduce from~\cite[Theorem~A]{danilenkoClassificationRankoneActions2025} (as in the proof of Lemma~\ref{lem:ProperlyChosenCuttingAndStacking1}) that the two $(C,F)$-constructions yield isomorphic rank-one systems.
\end{proof}

For every $n\geq 1$ and $i\in\{0,1,\dots,d_{n+1}-1\}$, we will define a bijection
\[\psi_{n,i}\colon I_{n+1,i}\to [k_nk_{n-1}]\]
and a map $\varphi_n\colon F_{n+1}\to F_n$ defined by
\[ 
    \forall x\in F_{n+1},\ \varphi_n(x)=\begin{cases}
                    \psi_{n,i}(x)\bmod k_n&\text{ if }x\in I_{n+1,i}\\
                    x\bmod k_n&\text{ if }x\in I_{n+1,\ast}
                \end{cases}.
\]
With these maps $\varphi_n$, we will build an orbit equivalence between $T^e$ and $T^o$.

Notation introduced in this section will be used throughout. Furthermore, all parameters of rank-one systems are henceforth assumed to be as in Lemmas~\ref{lem:ProperlyChosenCuttingAndStacking1} and~\ref{lem:ProperlyChosenCuttingAndStacking2}.

\subsection{Definition of the maps \texorpdfstring{$\psi_{n,i}$}{}.}\label{sec:Construction}

For every $i\in [d_2]$, we define $\psi_{1,i}=\bmod k_1$ on $I_{2,i}$.

Assume that $\psi_{n-1,j}\colon I_{n,j}\to [k_{n-1} k_{n-2}]$ has been defined for every $j\in [d_n]$. Given $i\in [d_{n+1}]$, we now define $\psi_{n,i}\colon I_{n+1,i}\to [k_n k_{n-1}]$.

For every $\alpha\in [k_{n-1}]$, set $R_{n+1,i,\alpha}=(i+1)k_{n}k_{n-1} - (k_{n-1}-\alpha)r_n+[r_n]$ and define $\psi_{n,i}$ on each $R_{n+1,i,\alpha}$ as an increasing bijection $R_{n+1,i,\alpha}\to \alpha k_n+I_{n,\ast}$. It remains to define $\psi_{n,i}$ on $I_{n+1,i}\setminus (R_{n+1,i,0}\sqcup\dots\sqcup R_{n+1,i,k_{n-1}-1})$. Note that the cardinality of the latter is $k_{n-1}d_n\cdot k_{n-1}k_{n-2}$. The goal is now to partition this set into $k_{n-1}d_n$ subsets $J_{n,i,0},J_{n,i,1},\ldots ,J_{n,i,k_{n-1}d_n-1}$ of cardinality $k_{n-1}k_{n-2}$ so that $\psi_{n,i}$ induces bijections $J_{n,i,\alpha d_{n}+\beta}\to \alpha k_n+I_{n,\beta}$ for every $\alpha\in [k_{n-1}]$ and $\beta\in [d_n]$. We build these subsets as disjoint unions of blocks of cardinality $k_{n-1}$. These blocks are of two types.
    \begin{itemize}
        \item\textbf{Blocks of the first type.} Let
        \[C_{n-1,i}\coloneq\{c\in C_{n-1}\mid F_{n-1}+c\subset I_{n+1,i}\setminus (R_{n+1,i,0}\sqcup\dots\sqcup R_{n+1,i,k_{n-1}})\},\]
        which is also equal to $\{c\in C_{n-1}\mid (F_{n-1}+c)\cap I_{n+1,i}\not=\emptyset\}$ by Lemma~\ref{lem:ProperlyChosenCuttingAndStacking2}. The blocks of the first type are exactly the sets $F_{n-1}+c$, for $c\in C_{n-1,i}$.
        
        \item\textbf{Blocks of the second type.} In $I_{n+1,i}\setminus (R_{n+1,i,0}\sqcup\dots\sqcup R_{n+1,i,k_{n-1}})$, we remove all blocks of the first type, namely $F_{n-1}+C_{n-1,i}$. We get a set whose cardinality is a multiple of $k_{n-1}$, say $Mk_{n-1}$ for some integer $M\geq 0$, which we enumerate by $x_0<\dots<x_{Mk_{n-1}-1}$. The blocks of the second type are thus defined as the sets
        \[\{x_{\ell k_{n-1}},x_{\ell k_{n-1}+1},\dots,x_{(\ell+1) k_{n-1}-1}\}\]
        for every $\ell\in [M]$.
    \end{itemize}
    We now enumerate the blocks (of both types) $B_{n-1,i,0},B_{n-1,i,1},\dots,B_{n-1,i,k_{n-2}k_{n-1}d_n-1}$ by requiring that $\min{B_{n-1,i,\ell}}<\min{B_{n-1,i,\ell+1}}$ for every $\ell\in [k_{n-2}k_{n-1}d_n-1]$. Then, for every $r\in [k_{n-1}d_n]$, we define
    \[J_{n,i,r}=\bigsqcup_{0\leq \ell\leq k_{n-2}-1}{B_{n-1,i,rk_{n-2}+\ell}}.\]

    \begin{notation}
    For the sequel, we will need the following. We denote by $2_{n-1,i}$ the set of integers $j\in [k_{n-2}k_{n-1}d_n]$ such that $B_{n-1,i,j}$ is of the second type. Given $j,j'\in [k_{n-2}k_{n-1}d_n]$ such that $B_{n-1,i,j}\in 2_{n-1,i}$ and $B_{n-1,i,j'}$ is of the first type, we say that $B_{n-1,i,j'}$ \textit{is inside} $B_{n-1,i,j}$ if $B_{n-1,i,j'}\subset\{\min{B_{n-1,i,j}},\dots,\max{B_{n-1,i,j}}\}$. We denote by $1_{n-1,i}$ the set of integers $j\in k_{n-2}k_{n-1}d_n$ such that $B_{n-1,i,j}$ is of the first type and not inside a block of the second type.
    \end{notation}
    
    Writing $r=\alpha d_n+\beta$ with $\alpha\in [k_{n-1}]$ and $\beta\in [d_n]$, we define $\psi_{n,i}$ on $J_{n,i,r}$ as follows. We first consider the unique bijection $\xi_{n,i,r}\colon J_{n,i,r}\to [k_{n-1}k_{n-2}]$ inducing an increasing map $B_{n-1,i,rk_{n-2}+\ell}\to \ell k_{n-1}+[k_{n-1}]$ for every $\ell\in [k_{n-2}]$. If $B_{n-1,i,rk_{n-2}+\ell}$ is of the first type, then $\psi_{n,i}\coloneqq\alpha k_n+\psi^{-1}_{n-1,\beta}\circ\xi_{n,i,r}$ on this block. More precisely, if $B_{n-1,i,rk_{n-2}+\ell}$ is of the form $F_{n-1}+c$ for some $c\in C_{n-1,i}$, then we have
        \[\psi_{n,i}=\alpha k_n + \psi_{n-1,\beta}^{-1}(\cdot -c+\ell k_{n-1})\]
    on this set. Then we complete $\psi_{n,i}$ to a bijection $J_{n,i,r}\to\alpha k_n+I_{n,\beta}$ by imposing that $\psi_{n,i}$ is increasing on the union of blocks of the second type. Doing this for every such $r$ concludes the definition of $\psi_{n,i}\colon I_{n+1,i}\to [k_nk_{n-1}]$ (see Figure~\ref{fig:construction}).

    \begin{figure}[p]\label{fig:construction}
		\centering
		\includegraphics[width=1\linewidth]{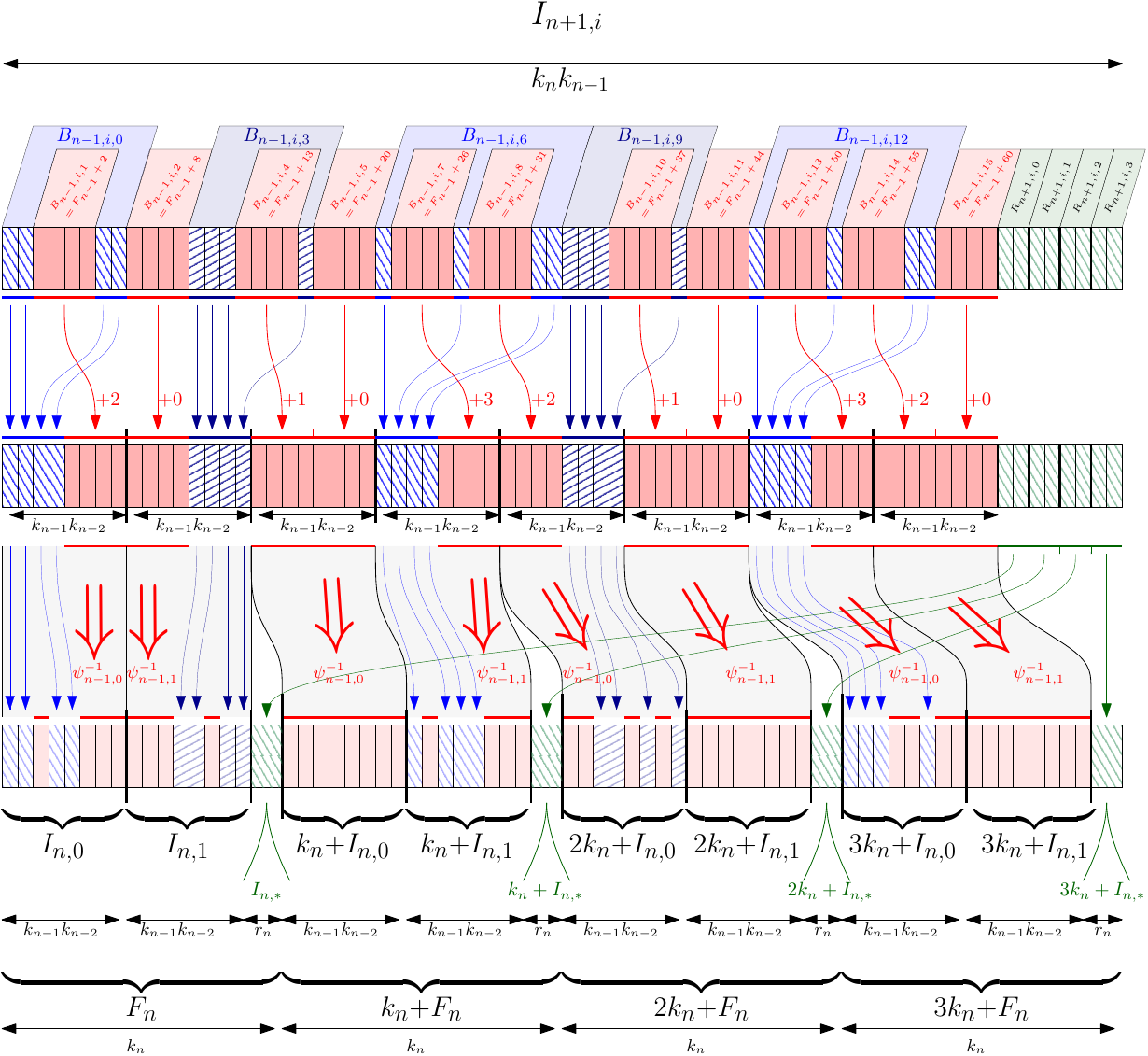}
		\caption{\small{Definition of $\psi_{n,i}\colon I_{n+1,i}\to [k_nk_{n-1}]$. In this example, $k_{n-2}=2$, $k_{n-1}=4$, $k_n=18$. We have $J_{n,i,0}=B_{n-1,i,0}\sqcup B_{n-1,i,1}$, $J_{n,i,1}=B_{n-1,i,2}\sqcup B_{n-1,i,3}$, etc. Moreover $2_{n-1,i}=\{0,3,6,9,12\}$, $1_{n-1,i}=\{2,5,11,15\}$ and the following blocks of first type are inside blocks of second type: $B_{n-1,i,1}$, $B_{n-1,i,4}$, $B_{n-1,i,7}$, $B_{n-1,i,8}$, $B_{n-1,i,10}$, $B_{n-1,i,13}$ and $B_{n-1,i,14}$.
        The last levels (shaded in green) are mapped to the copies of $I_{n,\ast}$ (green arrows). Outside these levels, the map $\psi_{n,i}$ is defined in two steps. We first rearrange the levels for the blocks to become intervals of size $k_{n-1}$ (the red ones are of first type, the ones shaded in blue are of second type). We then use the inverse maps $\psi_{n-1,\beta}^{-1}$ on the blocks of first type (red arrows) and complete using the unique increasing map (blue arrows).
        Note that the full description would be $F_{n-1}+2+ik_{n}k_{n-1}$ instead of $F_{n-1}+2$ (and similarly for the other blocks of first type), and $\alpha k_n+(\psi_{n-1,\beta}^{-1}\circ\bmod k_{n-1}k_{n-2})$ instead of $\psi_{n-1,\beta}^{-1}$ in the drawing. Finally, the maps $\xi_{n,i,r}$ correspond to the first step (rearrangement of the levels), composed with the $\bmod k_{n-1}k_{n-2}$ map we omit in the second step.}}
	\end{figure}

    \begin{remark}\label{rem:ToSumUp}
        To sum up:
        \begin{itemize}
            \item $\psi_{n,i}$ is a bijection from $I_{n+1,i}$ to $[k_nk_{n-1}]$.
            \item $I_{n+1,i}$ and $[k_nk_{n-1}]$ can be written as $[k_nk_{n-1}]=\bigsqcup_{\alpha\in [k_{n-1}]}{(\alpha k_n+[k_n])}$ and $I_{n+1,i}=\bigsqcup_{\alpha\in [k_{n-1}]}{I_{n+1,i,\alpha}}$, where
            \[I_{n+1,i,\alpha}\coloneq\left (\bigsqcup_{0\leq \beta\leq d_n-1}{J_{n,i,\alpha d_n+\beta}}\right )\sqcup R_{n+1,i,\alpha}\]
            for every $\alpha\in [k_{n-1}]$, and $\psi_{n,i}$ restricts to a bijection $I_{n+1,i,\alpha}\to\alpha k_n +[k_n]$.
            \item The definition of $I_{n+1,i,\alpha}$ provides itself a partition of the set, and $\alpha k_n +[k_n]$ can be written as
            \[\alpha k_n +[k_n]=\left (\bigsqcup_{0\leq\beta\leq d_n-1}{(\alpha k_n+I_{n,\beta})}\right )\sqcup (\alpha k_n+I_{n,\ast}).\]
            Then $\psi_{n,i}$ restricts to a bijection $R_{n+1,i,\alpha}\to\alpha k_n+I_{n,\ast}$ and, for every $\beta\in [d_n]$, a bijection $J_{n,i,\alpha d_n+\beta}\to\alpha k_n+I_{n,\beta}$.
            \item Given $\alpha\in [k_{n-1}]$, $\beta\in [d_n]$, and writing $r=\alpha d_n+\beta$,
            \[J_{n,i,r}=\left (\bigsqcup_{\substack{l\in [k_{n-2}]\\rk_{n-2}+\ell\in 2_{n-1,i}}}{B_{n-1,i,rk_{n-2}+\ell}}\right )\sqcup\bigsqcup_{\substack{l\in [k_{n-2}]\\rk_{n-2}+\ell\not\in 2_{n-1,i}}}{B_{n-1,i,rk_{n-2}+\ell}}.\]
            Using the definition, it is not hard to prove that
            \[\psi_{n,i}(B_{n-1,i,rk_{n-2}+\ell})=\alpha k_n+I_{n,\beta,\ell}\]
            for every $l\in [k_{n-2}]$ such that $rk_{n-2}+\ell\not\in 2_{n-1,i}$, and
            \[\psi_{n,i}\left (\bigsqcup_{\substack{l\in [k_{n-2}]\\rk_{n-2}+\ell\in 2_{n-1,i}}}{B_{n-1,i,rk_{n-2}+\ell}}\right )=\bigsqcup_{\substack{l\in [k_{n-2}]\\rk_{n-2}+\ell\in 2_{n-1,i}}}{(\alpha k_n+I_{n,\beta,\ell})}.\]
        \end{itemize}
    \end{remark}
    
\subsection{Properties of the blocks}\label{sec:PropBlock}

    Let $n\geq 2$, $i\in [d_n]$. We summarize here the properties of the blocks $B_{n-1,i,\ell}$, for $\ell\in [k_{n-2}k_{n-1}d_n]$. Note that $\sigma_{n-1,j}\geq k'_{n-1}/\zeta(k'_{n-1})$ for every $j\in\{0,\dots,q_{n-1}\}$.
    \begin{enumerate}[label=(\Roman*)]
        
        \item Every block has size $k_{n-1}$.
        
        \item Blocks of the first type are intervals of the form $F_{n-1}+c$ with $c\in C_{n-1}$.
        
        \item\label{item:3} Given $\ell\in [k_{n-2}k_{n-1}d_n]\cap 2_{n-1,i}$, if $F_{n-1}+c_1,\ldots ,F_{n-1}+c_N$ denote the blocks of the first type inside $B_{n-1,i,\ell}$, with $c_1<\ldots <c_N\in C_{n-1}$ (if $B_{n-1,i,\ell}$ is an interval, then $N=0$), then the following hold.
        \begin{itemize}
        
            \item For every $i\in\{1,\ldots,N\}$, $B_{n-1,i,\ell+i}=F_{n-1}+c_i$.
            
            \item $N\leq \zeta(k'_{n-1})+1$. Indeed, $B_{n-1,i,\ell}$ has cardinality $k_{n-1}< k'_{n-1}$, and between any two consecutive blocks $F_{n-1}+c_j$ and $F_{n-1}+c_{j+1}$, $j\in\{1,\ldots,N-1\}$, there are at least $\frac{k'_{n-1}}{\zeta(k'_{n-1})}$ spacers (lying in $B_{n-1,i,\ell}$).
            
            \item Between any two consecutive blocks $F_{n-1}+c_j$ and $F_{n-1}+c_{j+1}$, $j\in\{1,\ldots,N-1\}$ there are less than $k_{n-1}$ elements, since these are elements of $B_{n-1,i,\ell}$ and there are at least two more spacers (the edges).

            \item $\ell+N+1\in 1_{n-1,i}\cup 2_{n-1,i}$, and $\min{B_{n-1,i,\ell+N+1}}=1+\max{B_{n-1,i,\ell}}$.
            
        \end{itemize}
        
        \item We deduce from the above description that, given $\ell,\ell'\in [k_{n-2}k_{n-1}d_n]$, if there exists $x\in B_{n-1,i,\ell}$ such that $x+1\in B_{n-1,i,\ell'}$, then $|\ell'-\ell|\leq \zeta(k'_{n-1}) +2$.
        
        \item\label{item:5} Given $r,r'\in [k_{n-1}d_n]$, if there exists $x\in J_{n,i,r}$ such that $x+1\in J_{n,i,r'}$, then 
        \[|r'-r|\leq 1+\frac{\zeta(k'_{n-1}) +2}{k_{n-2}}.\]
        Indeed, applying the previous point to $\ell,\ell'\in [k_{n-2}]$ such that $x\in B_{n-1,i,rk_{n-2}+\ell}$ and $x+1\in B_{n-1,i,r'k_{n-2}+\ell'}$ yields
        \[(|r-r'|-1)k_{n-2}\leq |rk_{n-2}+\ell -(r'k_{n-2}+\ell')|\leq  \zeta(k'_{n-1}) +2.\]
        This gives the desired bound.
    
        \item\label{item:6} Let $E=\bigsqcup_{u\leq\ell\leq v}{B_{n-1,i,\ell}}$, with $u,v\in [k_{n-2}k_{n-1}d_n]$ such that $u\leq v$. Utilizing the above, $E$ can be described as follows. We write $\{u,u+1,\dots,v\}\cap (1_{n-1,i}\sqcup 2_{n-1,i})=\{\ell_{1}<\ldots <\ell_{N}\}$ and denote by $\ell_0$ the greatest integer in $\{0,\dots,u-1\}\cap 2_{n-1,i}$, if it exists. We set $\ell_{\ast}\coloneq\ell_{N}-1$ if $\ell_N\in 2_{n-1,i}$ and some blocks inside $B_{n-1,\ell_N}$ are not in $E$, otherwise we set $\ell_{\ast}\coloneq v$. Then $E$ can be written as
        \[E=E_1\sqcup E_2\sqcup E_3,\]
        where:
        \begin{itemize}
            
            \item $E_1$ is empty if $\ell_0$ does not exist, otherwise it is the (possibly empty) union of the blocks of the first type $B_{n-1,i,u},\dots,B_{n-1,i,\ell_1-1}$ (these are the last blocks inside $B_{n-1,i,\ell_0}$). If it is not empty, then $E_1$ has at most $\zeta(k'_{n-1})+1$ connected components, of size $k_{n-1}$, and the gaps between these components are part of a single block of the second type, so the union of the gaps has cardinality less than $k_{n-1}$;
            
            \item $E_3$ is empty when $\ell_{\ast}$ equals $v$, otherwise it is $B_{n-1,i,\ell_N}\sqcup B_{n-1,i,\ell_N+1}\sqcup\dots\sqcup B_{n-1,i,v}$, namely $B_{n-1,i,\ell_N}$ along with an initial segment of blocks inside it;
            
            \item $E_2$ is the union of the blocks $B_{n-1,i,\ell}$, over $\ell\in\{\ell_1,\ell_1+1,\ldots,\ell_{\ast}\}$. $E_2$ is an interval, since for every $\ell\in\{\ell_1,\ell_1+1,\ldots,\ell_{\ast}\}\cap 2_{n-1,i}$, all the blocks inside $B_{n-1,i,\ell}$ are in $E$.
        
        \end{itemize}
        From the above estimates, we deduce that $E$ admits at most $2(\zeta(k'_{n-1})+1)+1$ connected components and
        \begin{align*}\max{E}-\min{E}&\leq (v-u+1)k_{n-1}+ k_{n-1}+(\zeta(k'_{n-1})+1)k_{n-1}\\
        &= (v-u+1)k_{n-1}+(\zeta(k'_{n-1})+2)k_{n-1}.\end{align*}

        \begin{figure}[ht]
		\centering
		\includegraphics[width=1\linewidth]{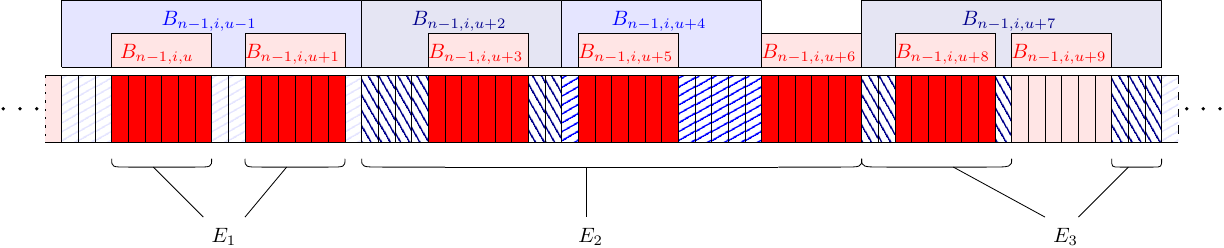}
		\caption{Illustration of $E=E_1\sqcup E_2\sqcup E_3$ with $v=u+8$.}
	\end{figure}
        
        \item\label{item:7} Let $r\leq r'\in [k_{n-1}d_n]$. Applying item~\ref{item:6} to $u=rk_{n-2}$ and $v=(r'+1)k_{n-2}-1$, we get the following properties on $E\coloneq\bigsqcup_{r\leq m\leq r'}{J_{n,i,m}}$:
        \begin{itemize}
            \item $E$ admits at most $2(\zeta(k'_{n-1})+1)+1$ connected components;
            \item $\max{E}-\min{E}\leq (r'-r+1)k_{n-2}k_{n-1}+(\zeta(k'_{n-1})+2)k_{n-1}$.
        \end{itemize}
    \end{enumerate}

\subsection{Estimates for the cocycles}

\begin{notation} Given a map $\psi\colon I\to\Z$, where $I$ is an interval of $\Z$, we write $\lambda_{\psi}(x)\coloneq\psi(x+1)-\psi(x)$ for every $x\in I^-\coloneqq I\setminus\{\max{I}\}$. The map $\lambda_{\psi}$ is called the \textit{cocycle} of $\psi$. \end{notation}

Recall that $\omega\colon\R_+\to\R_+$ and $\zeta\colon\R_+\to\R_+$ are maps related as in Lemma~\ref{lem:sublinear}

\begin{lemma}\label{lem:CocyclePsi} One can skip steps in the cutting-and-stacking construction of the rank-one systems in such a way that
\begin{itemize}
    \item the assumptions at the beginning of Section~\ref{sec:overview} are still valid;
    \item there exists a summable sequence $(\Gamma_n)$ such that for every $n\geq 3$,
    \begin{equation}\label{eq:InequalityGamma}
        \frac{1}{k_{n+1}}\sum_{i\in [d_{n+1}]}{\sum_{x\in I_{n+1,i}^-}{\omega(|\lambda_{\psi_{n,i}}(x)|)}}\leq \Gamma_n+\frac{1}{k_n}\sum_{\beta\in [d_n]}{\sum_{x\in [k_{n-1}k_{n-2}-1]}{\omega(|\lambda_{\psi^{-1}_{n-1,\beta}}(x)|)}}.
    \end{equation}
\end{itemize}
\end{lemma}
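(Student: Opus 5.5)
We fix $n\geq 3$ and $i\in[d_{n+1}]$ and classify the points $x\in I_{n+1,i}^-$ according to where $x$ and $x+1$ lie in the decomposition of Remark~\ref{rem:ToSumUp}: inside $R_{n+1,i,\alpha}$, across the boundary of some $R_{n+1,i,\alpha}$, or inside $\bigsqcup_r J_{n,i,r}$. In the last case we refine by blocks: both points in the same block of the first type, both in blocks of the second type, or $x$ and $x+1$ in different blocks. The idea is that only the pairs where both points lie in the same block of the first type $F_{n-1}+c$ carry a genuinely inherited cost. For these, by definition $\psi_{n,i}=\alpha k_n+\psi^{-1}_{n-1,\beta}(\cdot-c+\ell k_{n-1})$ on the block. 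Hence $\lambda_{\psi_{n,i}}(x)=\lambda_{\psi^{-1}_{n-1,\beta}}(y)$ with $y=x-c+\ell k_{n-1}$ ranging over pairs inside $\ell k_{n-1}+[k_{n-1}]$. As $(r,\ell)$ vary, each $y\in[k_{n-1}k_{n-2}-1]$ and $\beta$ is hit at most $k_{n-1}$ times per $i$, once for each $\alpha$. Summing over $i$ gives at most $d_{n+1}k_{n-1}\le k_{n+1}/k_n$ copies of $\sum_\beta\sum_y\omega(|\lambda_{\psi^{-1}_{n-1,\beta}}(y)|)$, which after division by $k_{n+1}$ yields the second term of \eqref{eq:InequalityGamma}.

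All remaining pairs have to be absorbed into $\Gamma_n$, and we bound the cocycle crudely for each type.
\begin{itemize}
\item On $R_{n+1,i,\alpha}$ the map is increasing onto $\alpha k_n+I_{n,\ast}$, an interval, so $\lambda=1$. These contribute at most $k_{n+1}\omega(1)/k_{n+1}\cdot(\text{fraction of }R\text{'s})\le \omega(1)r_nk_{n-1}/(k_nk_{n-1})\le\omega(1)k_{n-1}k_{n-2}/k_n$, which is summable.
\item At all other pairs we use $|\lambda|\le k_nk_{n-1}$, since the image lies in $[k_nk_{n-1}]$. This is too crude in general, so we first count the bad pairs and then improve the bound where needed.
\end{itemize}

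Next come the pairs inside the union of second-type blocks of one $J_{n,i,r}$. Here $\psi_{n,i}$ is increasing onto $\alpha k_n+\bigsqcup_{\ell}I_{n,\beta,\ell}$. Jumps happen where consecutive elements of the domain skip over inner first-type blocks, or where the target skips over first-type target pieces. By item~\ref{item:3} and item~\ref{item:7}, these jumps have size $O(\zeta(k'_{n-1})k_{n-1})$ in the domain and $O(k_{n-1}k_{n-2})$ in the target. Their number per $J_{n,i,r}$ is $O(k_{n-2})$, bounded by the number of blocks, while there are $\sim k_{n+1}/(k_{n-2}k_{n-1})$ sets $J$. Their total contribution is therefore $O\big(\omega(Ck_{n-1}k_{n-2})/k_{n-1}\big)$, which is summable after skipping steps since $\omega$ is sublinear. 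The same estimate covers transitions between consecutive blocks of different types inside one $J_{n,i,r}$.

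The main obstacle is the transitions from $J_{n,i,r}$ to $J_{n,i,r'}$, and from $J$ to $R$. By item~\ref{item:5}, $|r-r'|\le 1+(\zeta(k'_{n-1})+2)/k_{n-2}$. Writing $r=\alpha d_n+\beta$, the images $\alpha k_n+I_{n,\beta}$ and $\alpha'k_n+I_{n,\beta'}$ are then at distance $O(\zeta(k'_{n-1})k_{n-1}+k_{n-1}k_{n-2})$ when $\alpha=\alpha'$. When $\alpha\ne\alpha'$ the distance is at most $2k_n$, and this happens only $O(k_{n-1})$ times per $i$, at most $\zeta$-many crossings per $\alpha$-boundary by item~\ref{item:7}. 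The number of such pairs is $O(k_{n+1}/(k_{n-1}k_{n-2})\cdot\zeta(k'_{n-1}))$, by counting the connected components in item~\ref{item:7}. We then bound the total by
\[
\frac{C\zeta(k'_{n-1})}{k_{n-1}k_{n-2}}\,\omega\big(C(\zeta(k'_{n-1})+k_{n-2})k_{n-1}\big)+\frac{C\zeta(k'_{n-1})}{k_n}\,\omega(2k_n),
\]
and we define $\Gamma_n$ as the sum of all these terms. Summability follows from Lemma~\ref{lem:sublinear}, which gives $\omega(x\zeta(x))\zeta(x)/x\to0$, together with $\zeta(k'_{n-1})\le k'_{n-1}$. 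We then pass to a further subsequence of towers (Remark~\ref{rem:subsequence}) so that these terms are bounded by $2^{-n}$, and we check that this keeps the assumptions of Section~\ref{sec:overview}. The delicate point is getting the $\zeta$-factors right so that Lemma~\ref{lem:sublinear} applies in exactly the form $\omega(x\zeta(x))\zeta(x)/x$.
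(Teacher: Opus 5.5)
Your decomposition matches the paper's: $R_{n+1,i,\alpha}$ and its boundary, pairs in the same first-type block, the second-type part, transitions inside one $J_{n,i,r}$, and crossings between different $J_{n,i,r}$ with equal or different $\alpha$. Your treatment of the inherited term, of the $R$'s and of the crossings agrees with the paper's.

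There is one class of pairs you never bound: the pairs $x,x+1$ in the second-type part of some $J_{n,i,r}$ at which $\psi_{n,i}$ does not jump, so $\lambda_{\psi_{n,i}}(x)=1$. You only sum the jumps there. Each such pair costs $\omega(1)>0$. So their total contribution to the left-hand side of \eqref{eq:InequalityGamma} is about $\omega(1)$ times the proportion of $F_{n+1}$ covered by blocks of the second type. Nothing in your argument shows this proportion is summable in $n$, and your $\Gamma_n$ has no term for it.

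The missing observation is that blocks of the second type consist of stage-$(n-1)$ spacers, i.e.\ points of $F_{n+1}\setminus(F_{n-1}+C_{n-1})$. Hence, over all $i\in[d_{n+1}]$, there are at most $\sigma_{n-1}$ such points. This gives the term $\omega(1)\sigma_{n-1}/k_{n+1}$ in the paper's $\Gamma_n$, which is summable because the measure is finite. Alternatively, since $\psi^{-1}_{n-1,\beta}$ is injective, $\omega(|\lambda_{\psi^{-1}_{n-1,\beta}}|)\geq\omega(1)$ everywhere. So the slots $\ell k_{n-1}+[k_{n-1}]$ not used by first-type blocks leave room in your inherited term (you only used ``at most $k_{n-1}$ times'') to absorb these pairs, up to an error $\omega(1)/k_{n-1}$. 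Either way, this has to be argued.

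Smaller points:
\begin{itemize}
\item The number of target jumps in the second-type part of one $J_{n,i,r}$ is not bounded by the number of blocks. Each $I_{n,\beta,\ell}$ is a union of consecutive sets $J_{n-1,\beta,\cdot}$ together with $R_{n,\beta,\ell}$, so by item~\ref{item:7} at the previous stage it can have of order $\zeta(k'_{n-2})$ components. The correct count is $O(k_{n-2}\zeta(k'_{n-2}))$ per $J_{n,i,r}$. This is harmless, because $\zeta(k'_{n-2})$ is fixed before $k_{n-1}$ is chosen.
\item The bound $2k_n$ for crossings with $\alpha\neq\alpha'$ presupposes $|\alpha-\alpha'|\leq 1$, which you have not justified. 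The crude bound $k_nk_{n-1}$, as in the paper, suffices.
\item The pairs crossing the boundary of some $R_{n+1,i,\alpha}$ need an explicit count: at most $2k_{n-1}$ per $i$, contributing $2\omega(k_nk_{n-1})/k_n$.
\item To apply Lemma~\ref{lem:sublinear} with a constant $C\geq 1$ inside $\omega$, use that $\zeta$ is non-decreasing: $\omega(Cx\zeta(x))\zeta(x)/x\leq C\,\omega(Cx\zeta(Cx))\zeta(Cx)/(Cx)\to 0$.
\end{itemize}
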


\begin{proof}
Under the assumptions at the beginning of Section~\ref{sec:overview}, we show that \eqref{eq:InequalityGamma} holds with
\begin{align*}
        \Gamma_n\coloneq &\ \frac{k_{n-1}k_{n-2}}{k_n}\ \omega(1)+2\frac{\omega(k_{n-1}k_n)}{k_n} +3\frac{\zeta(k'_{n-1})\ \omega(k_{n-1}k_{n-2})}{k_{n-1}}\\
        &+5\frac{\zeta(k'_{n-1})\ \omega(2k_{n-1}k_{n-2})}{k_{n-1}k_{n-2}}+5\frac{\zeta(k'_{n-1})\ \omega\left(6\zeta(k'_{n-1})k_{n-1}\right )}{k_{n-1}k_{n-2}}\\
        &+5\frac{\zeta(k'_{n-1})\omega(k_n k_{n-1})}{k_n}\\
        &+\frac{\sigma_{n-1}}{k_{n+1}}\omega(1)+5\frac{\sigma_{n-1}}{k_{n+1}}\cdot\frac{\zeta(k'_{n-2})\ \omega(k_{n-1}k_{n-2})}{k_{n-1}}.
    \end{align*}
Assuming it is true, it is straightforward that we can replace $(k_n)$ by a subsequence for $(\Gamma_n)$ to be summable, without affecting any of our standing assumptions. Recall that $(\sigma_{n-1}/k_{n+1})$ is always summable by the definition of a p.m.p. rank-one system (we remind the reader that parameters of the rank-one systems $T^e$ and $T^o$ are supported on even or odd integers, contrary to the convention in Definition~\ref{defr1}).

We now prove \eqref{eq:InequalityGamma}. Let $n\geq 3$, $i\in [d_{n+1}]$. We consider a partition of $I_{n+1,i}^-=\{ik_nk_{n-1},\ldots ,(i+1)k_nk_{n-1}-2\}$ in four sets $\mathcal{J}$, $\partial \mathcal{J}$, $\mathcal{R}$, and $\partial \mathcal{R}$, where
    \begin{itemize}
        \item $\mathcal{J}$ is the set of points $x\in I_{n+1,i}^-$ such that $x$ and $x+1$ lie in the same set $J_{n,i,r}$, for some $r\in [k_{n-1}d_n]$;
        \item $\partial \mathcal{J}$ is the set of points $x\in I_{n+1,i}^-$ such that $x\in J_{n,i,r}$ and $x+1\in J_{n,i,r'}$, with $r\not=r'$;
        \item $\mathcal{R}$ is the set of points $x\in I_{n+1,i}^-$ such that $x$ and $x+1$ lie in the same $R_{n+1,i,\alpha}$, for some $\alpha\in [k_{n-1}]$;
        \item $\partial \mathcal{R}$ is the set of points $x\in I_{n+1,i}^-$ such that $x=\max{R_{n+1,i,\alpha}}$ or $x+1=\min{R_{n+1,i,\alpha}}$, for some $\alpha\in [k_{n-1}]$.
    \end{itemize}
    We also partition $\mathcal{J}=\mathcal{J}^+\sqcup \mathcal{J}^-$ and $\partial \mathcal{J}=\partial \mathcal{J}^+\sqcup \partial \mathcal{J}^-$ as follows.
    \begin{itemize}
        \item Let $x\in \mathcal{J}$, and $r\in [k_{n-1}d_n]$ such that $x$ and $x+1$ lie in the same set $J_{n,i,r}$. We know that $x\in B_{n-1,i,rk_{n-2}+\ell}$ and $x+1\in B_{n-1,i,rk_{n-2}+\ell'}$ for some integers $\ell,\ell'\in [k_{n-2}]$. Then $x\in \mathcal{J}^+$ if and only if $\ell=\ell'$.
        \item Let $x\in\partial \mathcal{J}$, and $r,r'\in [k_{n-1}d_n]$ such that $x\in J_{n,i,r}$ and $x+1\in J_{n,i,r'}$, with $r\neq r'$. We write $r=\alpha d_n+\beta$ and $r=\alpha' d_n+\beta'$ with $\alpha,\alpha'\in [k_{n-1}]$ and $\beta,\beta'\in [d_n]$. Then $x\in\partial \mathcal{J}^+$ if and only if $\alpha=\alpha'$ (in this case $\beta\neq\beta'$), which exactly means that $x$ and $x+1$ lie in the same set $I_{n+1,i,\alpha}$.
    \end{itemize}
    By construction, $|\lambda_{\psi_{n,i}}(x)|=1$ for every $x\in \mathcal{R}$. When $x\in\partial \mathcal{R}$, we consider the coarse bound $\lvert\lambda_{\psi_{n,i}}(x)\rvert\leq k_{n-1}k_n$. Furthermore, we know that $|\mathcal{R}|\leq k_{n-1}r_n\leq k_{n-1}^2k_{n-2}$ and $|\partial \mathcal{R}|\leq 2k_{n-1}$.
    
    If $x\in \mathcal{J}^{-}$, then $x$ and $x+1$ lie in the same set $J_{n,i,\alpha d_n +\beta}$, so $\psi_{n,i}(x), \psi_{n,i}(x+1)\in \alpha k_n +I_{n,\beta}$ and therefore $|\lambda_{\psi_{n,i}}(x)|\leq k_{n-1}k_{n-2}$. By item~\ref{item:3} in Section~\ref{sec:PropBlock}, for a given $r\in [k_{n-1}d_n]$ and a given $\ell\in [k_{n-2}]$, there are at most $\zeta(k'_{n-1}) +2$ points $x$ of $B_{n-1,i,rk_{n-2}+\ell}$ such that $x+1\not\in B_{n-1,i,rk_{n-2}+\ell}$, so 
    \[|\mathcal{J}^-|\leq k_{n-2}k_{n-1}d_n(\zeta(k'_{n-1}) +2)\leq 3\zeta (k'_{n-1})k_n.\]
    
    If $x\in \partial \mathcal{J}^+$, then $x\in J_{n,i,\alpha d_n+\beta}$ and $x+1\in J_{n,i,\alpha d_n+\beta'}$. Denoting $r=\alpha d_n+\beta$ and $r'=\alpha d_n+\beta'$, item~\ref{item:5} in Section~\ref{sec:PropBlock} implies
    \[|\beta'-\beta|\leq 1+\frac{\zeta(k'_{n-1}) +2}{k_{n-2}}\leq 1+3\frac{\zeta(k'_{n-1})}{k_{n-2}}.\]
    Since $\psi_{n,i}(x)\in \alpha k_n + I_{n,\beta}$ and $\psi_{n,i}(x+1)\in \alpha k_n + I_{n,\beta'}$, we get
    \[\lambda_{\psi_{n,i}}(x)\leq |\beta-\beta'|k_{n-1}k_{n-2}\leq k_{n-1}k_{n-2} + 3 \zeta(k'_{n-1})k_{n-1}.\]
    For a given $\alpha\in [k_{n-1}]$ and a given $\beta\in [d_n]$, item~\ref{item:7} in Section~\ref{sec:PropBlock} implies that there are at most $2( \zeta(k'_{n-1})+1)+1\leq 5\zeta(k'_{n-1})$ points $x$ of $J_{n,i,\alpha d_n+\beta}$ such that $x+1\not\in J_{n,i,\alpha d_n+\beta}$, so 
    \[|\partial \mathcal{J}^+|\leq 5\zeta(k'_{n-1})\frac{k_n}{k_{n-2}}.\]
    
    If $x\in\partial \mathcal{J}^-$, then $x\in I_{n+1,i,\alpha}\setminus R_{n+1,i,\alpha}$ and $x+1\in I_{n+1,i,\alpha'} \setminus R_{n+1,i,\alpha'}$, with $\alpha\neq\alpha'$. Item~\ref{item:7} in Section~\ref{sec:PropBlock} implies that there are at most $5\zeta(k'_{n-1})$ points $x$ in $\partial \mathcal{J}^-\cap (I_{n+1,i,\alpha}\setminus R_{n+1,i,\alpha})$. We thus get
    \[
        |\partial \mathcal{J}^-|\leq5\zeta(k'_{n-1})k_{n-1}
    \]
    We consider the coarse bound $|\lambda_{\psi_{n,i}}|\leq k_{n}k_{n-1}$ on $\partial \mathcal{J}^-$.
    
    We finally study the set $\mathcal{J}^+$. We fix $r\in [k_{n-1}d_n]$, written as $\alpha d_n+\beta$ with $\alpha\in [k_{n-1}]$ and $\beta\in [d_n]$. The map $\psi_{n,i}$ restricts to a bijection $J_{n,i,r}=\bigsqcup_{0\leq\ell\leq k_{n-2}-1}{B_{n-1,i,rk_{n-2}+\ell}}\to\alpha k_n +I_{n,\beta}$. Given $\ell\in [k_{n-2}]$ such that $rk_{n-2}+\ell\not\in 2_{n-1,i}$, $\psi_{n,i}=\alpha k_n+\psi^{-1}_{n-1,\beta}\circ \xi_{n,i,r}$ on $B_{n-1,i,rk_{n-2}+\ell}$, where the bijection $\xi_{n,i,r}\colon J_{n,i,r}\to [k_{n-1}k_{n-2}]$ satisfies $\xi_{n,i,r}(x+1)=\xi_{n,i,r}(x)+1$ for every $x\in \mathcal{J}^+\cap J_{n,i,r}$. This implies that for every $\ell\in [k_{n-2}]$ such that $rk_{n-2}+\ell\not\in 2_{n-1,i}$ and for every $x\in \mathcal{J}^+\cap B_{n-1,i,rk_{n-2}+\ell}$, $\lambda_{\psi_{n,i}}(x)=\lambda_{\psi^{-1}_{n-1,\beta}}(\xi_{n,i,r}(x))$.
    
    It remains to treat the case $x\in \mathcal{J}^+\cap B_{n-1,i,rk_{n-2}+\ell}$ for $\ell\in [k_{n-2}]$ such that $rk_{n-2}+\ell\in 2_{n-1,i}$. By construction, $\psi_{n,i}$ restricts to an increasing bijection
    \[
        \bigsqcup_{\substack{\ell\in [k_{n-2}]\\rk_{n-2}+\ell\in 2_{n-1,i}}}B_{n-1,i,r k_{n-2}+\ell}\longrightarrow\bigsqcup_{\substack{\ell\in [k_{n-2}]\\rk_{n-2}+\ell\in 2_{n-1,i}}}(\alpha k_n+I_{n,\beta,\ell})
    \]
    (see Remark~\ref{rem:ToSumUp}). Therefore, $\lambda_{\psi_{n,i}}(x)=1$ for every $x\in \bigsqcup_{\ell\in [k_{n-2}],\ rk_{n-2}+\ell\in 2_{n-1,i}}{B_{n-1,i,r k_{n-2}+\ell}}$ (at most $|(rk_{n-2}+[k_{n-2}])\cap 2_{n-1,i}|\cdot k_{n-1}$ elements), except when $\psi_{n,i}(x)$ and $\psi_{n,i}(x+1)$ do not lie in the same connected component of the target set (at most $|(rk_{n-2}+[k_{n-2}])\cap 2_{n-1,i}|\cdot (2(\zeta(k'_{n-2})+1)+1)$ elements by item~\ref{item:7} in Section~\ref{sec:PropBlock}), in which case $\lambda_{\psi_{n,i}}(x)\leq\operatorname{diam}(\alpha k_n+I_{n,\beta})= k_{n-1}k_{n-2}$.

    Writing $r=\alpha d_n+\beta$, we thus get

    \begin{align*}
        \sum_{x\in \mathcal{J}^+\cap J_{n,i,r}}{\omega(|\lambda_{\psi_{n,i}}(x)|)}
       \leq &\ |(rk_{n-2}+[k_{n-2}])\cap 2_{n-1,i}|\cdot k_{n-1}\ \omega(1)\\
       &+|(rk_{n-2}+[k_{n-2}])\cap 2_{n-1,i}|\cdot 5\zeta(k'_{n-2})\ \omega(k_{n-1}k_{n-2})\\
       &+\sum_{x\in [k_{n-1}k_{n-2}-1]}{\lambda_{\psi^{-1}_{n-1,\beta}}(x)},
    \end{align*}
    so
    \begin{align*}
        \sum_{x\in \mathcal{J}^+}{\omega(|\lambda_{\psi_{n,i}}(x)|)}
        = &\sum_{\alpha\in [k_{n-1}],\ \beta\in [d_n]}{\sum_{x\in \mathcal{J}^+\cap J_{n,i,\alpha d_n+\beta}}{\omega(|\lambda_{\psi_{n,i}}(x)|)}}\\
        \leq &\ |[d_nk_{n-1}k_{n-2}]\cap 2_{n-1,i}|\cdot k_{n-1}\ \omega (1)\\
        &+5\ |[d_nk_{n-1}k_{n-2}]\cap 2_{n-1,i}|\cdot\zeta(k'_{n-2})\ \omega(k_{n-1}k_{n-2})\\
        &+k_{n-1}\sum_{\beta\in [d_n]}{\sum_{x\in [k_{n-1}k_{n-2}-1]}{\omega(|\lambda_{\psi^{-1}_{n-1,\beta}}(x)|)}}.
    \end{align*}
    Gathering all these estimates, we have
    \begin{align*}
        \sum_{x\in I_{n+1,i}^-}{\omega(|\lambda_{\psi_{n,i}}(x)|)}
        \leq &\ k_{n-1}^2k_{n-2}\ \omega(1)\\
        &+2k_{n-1}\ \omega(k_{n-1}k_n)\\
        &+3\zeta (k'_{n-1})k_n\ \omega(k_{n-1}k_{n-2})\\
        &+5\frac{\zeta(k'_{n-1})k_n}{k_{n-2}}\ \omega(k_{n-1}k_{n-2}+3\zeta(k'_{n-1}) k_{n-1})\\
        &+5\zeta(k'_{n-1})k_{n-1}\ \omega(k_n k_{n-1})\\
        &+|[d_nk_{n-1}k_{n-2}]\cap 2_{n-1,i}|\cdot k_{n-1}\ \omega(1)\\
        &+5|[d_nk_{n-1}k_{n-2}]\cap 2_{n-1,i}|\cdot\zeta(k'_{n-2})\ \omega(k_{n-1}k_{n-2})\\
        &+k_{n-1}\sum_{\beta\in [d_n]}{\sum_{x\in [k_{n-1}k_{n-2}-1]}{\omega(|\lambda_{\psi^{-1}_{n-1,\beta}}(x)|)}}.
    \end{align*}
    
    We now sum over $i\in [d_{n+1}]$. Note that $\sum_i{|[d_nk_{n-1}k_{n-2}]\cap 2_{n-1,i}|}$ counts the number of blocks of second type in $F_{n+1}$. Since these blocks are each composed of $k_{n-1}$ spacers, we get $\sum_i{|[d_nk_{n-1}k_{n-2}]\cap 2_{n-1,i}|\ k_{n-1}}\leq \sigma_{n-1}$. This implies
    \begin{align*}
        \frac{1}{k_{n+1}}\sum_{i\in [d_{n+1}]}{\sum_{x\in I_{n+1,i}^-}{\omega(|\lambda_{\psi_{n,i}}(x)|)}}\leq &\ \frac{k_{n-1}k_{n-2}}{k_n}\ \omega(1)+2\frac{\omega(k_{n-1}k_n)}{k_n} +3\frac{\zeta(k'_{n-1})\ \omega(k_{n-1}k_{n-2})}{k_{n-1}}\\
        &+5\frac{\zeta(k'_{n-1})\ \omega(k_{n-1}k_{n-2}+3\zeta(k'_{n-1}) k_{n-1})}{k_{n-1}k_{n-2}}\\
        &+5\frac{\zeta(k'_{n-1})\ \omega(k_n k_{n-1})}{k_n}\\
        &+\frac{\sigma_{n-1}}{k_{n+1}}\omega(1)+5\frac{\sigma_{n-1}}{k_{n+1}}\cdot\frac{\zeta(k'_{n-2})\ \omega(k_{n-1}k_{n-2})}{k_{n-1}}\\
        &+\frac{1}{k_n}\sum_{\beta\in [d_n]}{\sum_{x\in [k_{n-1}k_{n-2}-1]}{\omega(|\lambda_{\psi^{-1}_{n-1,\beta}}(x)|)}}.
    \end{align*}
    Since $\omega\colon\R_+\to\R_+$ is increasing, $\omega(a+b)\leq \omega(2a)+\omega(2b)$ holds for all positive real numbers $a,b$. We thus get
    \[\omega(k_{n-1}k_{n-2}+3\zeta(k'_{n-1}) k_{n-1})\leq \omega(2k_{n-1}k_{n-2})+\omega\left(6\zeta(k'_{n-1})k_{n-1}\right),\]
    which yields the desired bound.
\end{proof}

We now consider the maps $\varphi_n\colon F_{n+1}\to F_n$ defined by
\[
    \forall x\in F_{n+1},\ \varphi_n(x)=\begin{cases}
                    \psi_{n,i}(x)\bmod k_n&\text{ if }x\in I_{n+1,i}\text{ for some }i\in[d_{n+1}]\\
                    x\bmod k_n&\text{ if }x\in I_{n+1,\ast}
                \end{cases}
\]
for $n\geq1$.

\begin{lemma}\label{lem:CocyclePhi}
    For every $n\geq 3$, we have
    \begin{align*}
        \frac{1}{k_{n+1}}\sum_{x\in [k_{n+1}-1]}{\omega(|\lambda_{\varphi_{n}}(x)|)}\leq &\ \frac{1}{k_nk_{n-1}}\omega(k_n)+\frac{k_nk_{n-1}}{k_{n+1}}\omega(k_n)\\
        &+\Gamma_n+\frac{1}{k_n}\sum_{\beta\in [d_n]}{\sum_{x\in [k_{n-1}k_{n-2}-1]}{\omega(|\lambda_{\psi^{-1}_{n-1,\beta}}(x)|)}}.
    \end{align*}
\end{lemma}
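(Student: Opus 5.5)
The plan is to split the sum over $x\in[k_{n+1}-1]$ according to where $x$ and $x+1$ lie in the partition $F_{n+1}=\bigsqcup_{i\in[d_{n+1}]}I_{n+1,i}\sqcup I_{n+1,\ast}$. There are three kinds of points $x$.
\begin{itemize}
\item Type one: $x,x+1$ lie in the same $I_{n+1,i}$.
\item Type two: $x,x+1$ both lie in $I_{n+1,\ast}$.
\item Type three: boundary points, where $x$ and $x+1$ lie in different pieces.
\end{itemize}
On the boundary points the coarse bound $|\lambda_{\varphi_n}(x)|\leq k_n-1<k_n$ always holds, since $\varphi_n$ takes values in $F_n=[k_n]$. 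There are at most $d_{n+1}$ boundary points, one at the right end of each $I_{n+1,i}$. Using $d_{n+1}\leq k_{n+1}/(k_nk_{n-1})$, their contribution after dividing by $k_{n+1}$ is at most $\omega(k_n)/(k_nk_{n-1})$, which is the first term of the statement.

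On $I_{n+1,\ast}$ we have $\varphi_n=x\bmod k_n$, so $|\lambda_{\varphi_n}(x)|$ equals $1$ except where the reduction wraps around, in which case it is at most $k_n$. I would bound each such term by $\omega(k_n)$ (note $\omega(1)\leq\omega(k_n)$ since $\omega$ is increasing). There are fewer than $r_{n+1}<k_nk_{n-1}$ such points, giving at most $\frac{k_nk_{n-1}}{k_{n+1}}\omega(k_n)$, the second term.

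The main step is the points inside a single $I_{n+1,i}$. Here $\varphi_n=\psi_{n,i}\bmod k_n$, and I would compare $\lambda_{\varphi_n}$ with $\lambda_{\psi_{n,i}}$. Reducing mod $k_n$ can only decrease the increment, in the sense that $|\lambda_{\varphi_n}(x)|\leq\min(|\lambda_{\psi_{n,i}}(x)|,k_n)$. Indeed, both $\psi_{n,i}(x)\bmod k_n$ and $\psi_{n,i}(x+1)\bmod k_n$ lie in $[k_n]$, so their difference is below $k_n$. If $\psi_{n,i}(x)$ and $\psi_{n,i}(x+1)$ lie in the same $\alpha k_n+[k_n]$, the difference is unchanged. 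Otherwise the residue difference is at most $k_n-1$, while $|\lambda_{\psi_{n,i}}(x)|\geq 1$.

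The subtle point is that in the crossing case the reduced difference can exceed the original one, for example when $\psi_{n,i}(x)=\alpha k_n+k_n-1$ and $\psi_{n,i}(x+1)=(\alpha+1)k_n$. So the true comparison is only $|\lambda_{\varphi_n}|\leq k_n$. This is harmless, however. Crossing points lie in the sets $\partial\mathcal{J}^-$ and $\partial\mathcal{R}$ of the proof of Lemma~\ref{lem:CocyclePsi}, where that proof already used the coarse bound $k_{n-1}k_n\geq k_n$. On all remaining points $\lambda_{\varphi_n}=\lambda_{\psi_{n,i}}$.

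Hence I would not compare values pointwise. Instead I would rerun the estimate of Lemma~\ref{lem:CocyclePsi} with $\varphi_n$ in place of $\psi_{n,i}$: every bound used there remains valid, since each coarse bound was at least $k_n$. Monotonicity of $\omega$ then gives
\[
\frac{1}{k_{n+1}}\sum_{i}\sum_{x\in I_{n+1,i}^-}\omega(|\lambda_{\varphi_n}(x)|)\leq\Gamma_n+\frac{1}{k_n}\sum_{\beta\in[d_n]}\sum_{x\in[k_{n-1}k_{n-2}-1]}\omega(|\lambda_{\psi^{-1}_{n-1,\beta}}(x)|).
\]
Summing the three contributions yields the claim. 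The only care needed is this crossing-case bookkeeping; everything else is direct counting.
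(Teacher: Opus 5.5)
Your proof is correct and follows the paper's argument. You bound the at most $d_{n+1}$ boundary points and the interior of $I_{n+1,\ast}$ by the coarse bound $k_n$. Inside each $I_{n+1,i}$ you note that $\lambda_{\varphi_n}=\lambda_{\psi_{n,i}}$ except at crossings between different intervals $\alpha k_n+[k_n]$; these crossings lie in $\partial\mathcal{J}^-\cup\partial\mathcal{R}$, where the proof of Lemma~\ref{lem:CocyclePsi} already used the bound $k_{n-1}k_n\geq k_n$.

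The only blemish is your opening claim $|\lambda_{\varphi_n}(x)|\leq\min(|\lambda_{\psi_{n,i}}(x)|,k_n)$. It is false, as your own example shows, and should be deleted; the rest of the argument never uses it.
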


Note that the sequence $\left (\frac{1}{k_nk_{n-1}}\omega(k_n)\right)_{n\geq 1}$ is bounded above since $\omega$ is sublinear.

\begin{proof}
    Given $i\in [d_{n+1}]$, we deduce from the definition of $\varphi_n$ that $\lambda_{\varphi_n}(x)=\lambda_{\psi_{n,i}}(x)$ for all points $x\in I_{n+1,i}^-$ with $\psi_{n,i}(x+1)$ and $\psi_{n,i}(x)$ lying in the same interval $\alpha k_n+[k_n]$ for some $\alpha\in [k_{n-1}]$. For the points $x$ which do not satisfy this property, we already used the crude bound $\lambda_{\psi_{n,i}}(x)\leq k_{n-1}k_n$ in the last proof, a bound which still works for $\lambda_{\varphi_{n}}(x)$. We can thus use the estimate of Lemma~\ref{lem:CocyclePsi} and add the contributions of $\lambda_{\varphi_{n}}(x)$ for all $x= (i+1)k_nk_{n-1}-1$, $i\in [d_{n+1}-1]$ and for $x\in I_{n+1,\ast}$. In both cases, we simply use the coarse bound $\lambda_{\varphi_{n}}(x)\leq k_n$. Gathering all these estimates, we get the desired bound.
\end{proof}

\begin{lemma}\label{lem:CocyclePsiInverse}
    One can skip steps in the cutting-and-stacking construction of the rank-one systems so that
    \begin{itemize}
        \item the assumptions at the beginning of Section~\ref{sec:overview} are still valid;
        \item the sequence $(\Gamma_n)$ in Lemma~\ref{lem:CocyclePsi} is summable;
        \item there exists a summable sequence $(\Delta_n)$ such that for every $n\geq 3$,
    \begin{equation}\label{eq:InequalityDelta}
        \frac{1}{k_{n+1}}\sum_{i\in [d_{n+1}]}{\sum_{x\in [k_n k_{n-1}-1]}{\omega(|\lambda_{\psi^{-1}_{n,i}}(x)|)}}
        \leq \Delta_n+\ \frac{1}{k_{n}}\sum_{\beta\in [d_n]}{\sum_{x\in I_{n,\beta}^-}{\omega(|\lambda_{\psi_{n-1,\beta}}(x)|)}}.
    \end{equation}
    \end{itemize}
\end{lemma}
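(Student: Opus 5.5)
The plan mirrors the proof of Lemma~\ref{lem:CocyclePsi}, with source and target exchanged. Fix $n\geq 3$ and $i\in[d_{n+1}]$, and partition $[k_nk_{n-1}-1]$, the domain of $\lambda_{\psi^{-1}_{n,i}}$, according to Remark~\ref{rem:ToSumUp}. Recall that
\[
[k_nk_{n-1}]=\bigsqcup_{\alpha}\Big(\bigsqcup_{\beta}(\alpha k_n+I_{n,\beta})\sqcup(\alpha k_n+I_{n,\ast})\Big).
\]
A point $y$ falls into one of the following cases.
\begin{itemize}
\item $y,y+1$ lie in the same $\alpha k_n+I_{n,\ast}$. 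Then $|\lambda_{\psi^{-1}_{n,i}}(y)|=1$, because $\psi_{n,i}$ is increasing from $R_{n+1,i,\alpha}$, an interval.
\item $y$ sits at the boundary of some $\alpha k_n+I_{n,\beta}$ or $\alpha k_n+I_{n,\ast}$. There are at most $(d_n+1)k_{n-1}$ such points, and we use the crude bound $k_nk_{n-1}$, the diameter of $I_{n+1,i}$.
\item $y,y+1$ lie in the same $\alpha k_n+I_{n,\beta}$. This is the main case.
\end{itemize}
After dividing by $k_{n+1}$ and summing over $i$, the boundary contribution is about $\frac{d_{n+1}d_nk_{n-1}}{k_{n+1}}\omega(k_nk_{n-1})\lesssim \omega(k_nk_{n-1})/(k_{n-1}k_{n-2})$. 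This is not obviously small, so we split the boundary points further. If $y$ and $y+1$ lie in the same $\alpha k_n+[k_n]$, both preimages lie in $I_{n+1,i,\alpha}$. By item~\ref{item:7}, $J_{n,i,\alpha d_n+\beta}\sqcup J_{n,i,\alpha d_n+\beta+1}$ has diameter at most $2k_{n-1}k_{n-2}+(\zeta(k'_{n-1})+2)k_{n-1}$. This gives $d_nk_{n-1}$ points with cocycle $\lesssim k_{n-1}k_{n-2}+\zeta k_{n-1}$, and only $k_{n-1}$ points (the transitions between consecutive $\alpha$, and the passage to $R_{n+1,i,\alpha}$, which is within distance $k_{n}k_{n-1}$) get the crude bound. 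The resulting terms are $\omega(\cdot)/k_{n-1}$-type and $\omega(k_nk_{n-1})/k_n$-type, which are summable after skipping steps, exactly as for $\Gamma_n$.

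For the main case, write $y=\alpha k_n+z$ with $z\in I_{n,\beta}$, so that $\psi_{n,i}^{-1}$ maps $\alpha k_n+I_{n,\beta}$ onto $J_{n,i,r}$ with $r=\alpha d_n+\beta$. We split $I_{n,\beta}=\bigsqcup_{\ell}I_{n,\beta,\ell}$ into the consecutive subintervals of length $k_{n-1}$ that are targets of the blocks $B_{n-1,i,rk_{n-2}+\ell}$.
\begin{itemize}
\item If $rk_{n-2}+\ell$ is of the first type, then on that block $\psi^{-1}_{n,i}=\xi_{n,i,r}^{-1}\circ\psi_{n-1,\beta}(\cdot-\alpha k_n)$. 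Since $\xi_{n,i,r}^{-1}$ is a translation on each block, $\lambda_{\psi^{-1}_{n,i}}(y)=\lambda_{\psi_{n-1,\beta}}(z)$ whenever $\psi_{n-1,\beta}(z)$ and $\psi_{n-1,\beta}(z+1)$ land in the same block index $\ell$. Summed over $\alpha$, this gives $k_{n-1}\sum_{\beta}\sum_{x\in I^-_{n,\beta}}\omega(|\lambda_{\psi_{n-1,\beta}}(x)|)$. Dividing by $k_{n+1}$ and summing over $i$ (using $d_{n+1}k_{n-1}\le k_{n+1}/k_n$) yields the main term on the right of~\eqref{eq:InequalityDelta}.
\item If $\psi_{n-1,\beta}(z)$ and $\psi_{n-1,\beta}(z+1)$ change block index within $J_{n,i,r}$, then by item~\ref{item:6} the cocycle is at most $\max J-\min J\le 2k_{n-1}k_{n-2}+\zeta k_{n-1}$. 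These events number at most $k_{n-2}$ per $r$ on the target side. The count is controlled because $\psi_{n-1,\beta}$ maps the consecutive length-$k_{n-1}$ intervals $I_{n,\beta,\ell}$ onto sets whose images under $\xi$ are the intervals $\ell k_{n-1}+[k_{n-1}]$ (Remark~\ref{rem:ToSumUp} at level $n-1$). So the number of $z$ where the block index changes is bounded by the number of $x$ where $\lambda_{\psi_{n-1,\beta}}(x)\neq 1$, plus $k_{n-2}$. I would simply add $\omega(2k_{n-1}k_{n-2})+\omega(2\zeta k_{n-1})$ times the count $d_nk_{n-1}k_{n-2}\cdot k_{n-1}/k_{n+1}$, which is $\lesssim\omega(\cdot)/k_{n}$ per unit. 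This is summable after passing to a subsequence.
\item If the block is of the second type, $\psi_{n,i}$ is increasing from the union of second-type blocks onto the union of the corresponding $I_{n,\beta,\ell}$. Hence $\lambda_{\psi_{n,i}^{-1}}$ equals the gap between consecutive points of a union of second-type blocks. By item~\ref{item:3}, this gap is $1$ except at the at most $\zeta(k'_{n-1})+1$ embedded first-type blocks, each of length $k_{n-1}$, and at jumps between distinct second-type blocks, which are bounded by item~\ref{item:6}. The total number of points involved is at most $\sigma_{n-1}/k_{n-1}$ times $(\zeta+2)$ over all $i$. This gives a term $\frac{\sigma_{n-1}}{k_{n+1}}\big(\omega(1)+\frac{\zeta(k'_{n-1})}{k_{n-1}}\omega(2k_{n-1}k_{n-2}+\zeta k_{n-1})\big)$, as in $\Gamma_n$.
\end{itemize}

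We define $\Delta_n$ as the sum of all these error terms, split via $\omega(a+b)\le\omega(2a)+\omega(2b)$. Each term has the form $\omega(\text{poly in }k_{n-1},k_{n-2},\zeta)\cdot\zeta/k_{n-1}$, or $\omega(k_nk_{n-1})\zeta/k_n$, or $\sigma_{n-1}/k_{n+1}$ times a bounded quantity. Since the arguments of $\omega$ depend only on earlier indices, and by Lemma~\ref{lem:sublinear} $\omega(x\zeta(x))\zeta(x)/x\to0$, we can pass to a subsequence of $(k_n)$ that makes both $(\Delta_n)$ and $(\Gamma_n)$ summable while preserving the standing assumptions (Remark~\ref{rem:subsequence}). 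I expect the main obstacle to be the first-type-block step. There one must verify that a change of block index under $\psi_{n-1,\beta}$ is rare enough, or, failing a clean count, bound its cost by $\omega(|\lambda_{\psi_{n-1,\beta}}(z)|)$ itself plus a small error. I would try the latter first, using that a jump of $\psi^{-1}_{n,i}$ across blocks within $J_{n,i,r}$ has length comparable to the jump of $\psi_{n-1,\beta}$ up to an additive $(\zeta+2)k_{n-1}$ from the interleaving described in item~\ref{item:6}.
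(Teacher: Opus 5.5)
Your decomposition matches the paper's, but the count of transition points in the main case, which you flag as the main obstacle, is left unresolved, and the count you do give is wrong.

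Your pieces are the same as the paper's five sets:
\begin{itemize}
\item the copies $\alpha k_n+I_{n,\ast}$, where $\lambda_{\psi^{-1}_{n,i}}=1$;
\item the $O(k_{n-1})$ transitions into or out of those copies and between consecutive $\alpha$, with the crude bound $k_nk_{n-1}$;
\item pairs $y,y+1$ in one first-type target $\alpha k_n+I_{n,\beta,\ell}$, where you recover $\lambda_{\psi_{n-1,\beta}}$ and hence the recursive term;
\item the union of second-type targets, where $\psi_{n,i}$ is increasing;
\item the remaining transition points, whose cocycle you correctly bound via item~\ref{item:7} by about $2k_{n-2}k_{n-1}+3\zeta(k'_{n-1})k_{n-1}$.
\end{itemize}

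\textbf{The faulty count.} Your bound of at most $k_{n-2}$ block-index changes per $r$ assumes the $I_{n,\beta,\ell}$ are consecutive intervals of length $k_{n-1}$. They are not. $I_{n,\beta,\ell}=\psi_{n-1,\beta}^{-1}(\ell k_{n-1}+[k_{n-1}])$, which by Remark~\ref{rem:ToSumUp} one level down equals $\bigsqcup_{\beta'}J_{n-1,\beta,\ell d_{n-1}+\beta'}\sqcup R_{n,\beta,\ell}$. The piece $R_{n,\beta,\ell}$ lies among the last $k_{n-2}r_{n-1}$ points of $I_{n,\beta}$, away from the $J$-part. The $J$-part itself interleaves with neighbouring $\ell$ (the set $E_1$ of item~\ref{item:6}).

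\textbf{Why your fallbacks fail.}
\begin{itemize}
\item The bound \enquote{number of $x$ with $\lambda_{\psi_{n-1,\beta}}(x)\neq 1$, plus $k_{n-2}$} is valid but useless. In general that set has positive density in $I_{n,\beta}$, since it tracks where the limiting cocycle differs from $1$. After summing over $\alpha,\beta,i$ and dividing by $k_{n+1}$, the error is of order $\omega(2k_{n-1}k_{n-2})$, which is unbounded.
\item Your normalization is also off: one event per triple $(\alpha,\beta,\ell)$ carries weight $d_{n+1}k_{n-1}d_nk_{n-2}/k_{n+1}\approx 1/k_{n-1}$, not $1/k_n$.
\item Comparing the jump with $|\lambda_{\psi_{n-1,\beta}}(z)|+O(\zeta(k'_{n-1})k_{n-1})$ does not help. $\omega$ of a sum only splits as $\omega(2|\lambda_{\psi_{n-1,\beta}}(z)|)+\omega(\cdots)$, which is not the form the recursion behind Corollary~\ref{cor:CocyclePhiSeries} needs. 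The additive part would still have to be multiplied by the unknown number of such points.
\end{itemize}

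\textbf{The missing ingredient.} Apply the connected-component bound of item~\ref{item:7} at level $n-1$.
\begin{itemize}
\item The $J$-part of $I_{n,\beta,\ell}$ has at most $2(\zeta(k'_{n-2})+1)+1$ components, so $I_{n,\beta,\ell}$ has at most $5\zeta(k'_{n-2})$.
\item Every transition point, including those between $\alpha k_n+I_{n,\beta}$ and $\alpha k_n+I_{n,\beta+1}$, is the right endpoint of a component of some $\alpha k_n+I_{n,\beta,\ell}$. So for each $i$ there are at most $k_{n-1}d_nk_{n-2}\cdot 5\zeta(k'_{n-2})\le 5k_n\zeta(k'_{n-2})$ of them.
\item Summing over $i$ and dividing by $k_{n+1}$ gives the term $5\zeta(k'_{n-2})\,\omega(2k_{n-2}k_{n-1}+3\zeta(k'_{n-1})k_{n-1})/k_{n-1}$.
\item This term is made summable by skipping steps. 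The factor $\zeta(k'_{n-2})$ depends only on earlier indices, while $\omega(4k_{n-2}k_{n-1})/k_{n-1}$ and $\omega(6\zeta(k'_{n-1})k_{n-1})/k_{n-1}$ tend to $0$ by Lemma~\ref{lem:sublinear}.
\end{itemize}

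With this count in place, the rest of your estimates go through. You should also record the $\omega(1)$ contribution of the at most $k_{n-1}r_n$ points inside the copies of $I_{n,\ast}$, which gives the term $\frac{k_{n-1}k_{n-2}}{k_n}\omega(1)$.
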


\begin{proof}
    Under the assumptions at the beginning of Section~\ref{sec:overview}, we prove that \eqref{eq:InequalityDelta} holds with
    \begin{align*}
        \Delta_n\coloneq&\ \frac{k_{n-1}k_{n-2}}{k_n}\ \omega(1)+2\frac{\omega(k_{n-1}k_n)}{k_n}\\
        &+\ \frac{\sigma_{n-1}}{k_{n+1}}\ \omega(1)+\frac{\omega(2k_{n-1})}{k_{n-1}}\\
        &+\ 5\frac{\zeta(k'_{n-2})\ \omega(4k_{n-2}k_{n-1})}{k_{n-1}}+5\frac{\zeta(k'_{n-2})\ \omega(6\zeta(k'_{n-1})k_{n-1})}{k_{n-1}}.
    \end{align*}
    Once again, replacing $(k_n)$ by a subsequence for $(\Delta_n)$ to be summable does not interfere with our standing assumptions, nor with summability of $(\Gamma_n)$.
    
    To prove \eqref{eq:InequalityDelta} we proceed as in the proof of Lemma~\ref{lem:CocyclePsi}. Let $n\geq 3$ and $i\in [d_{n+1}]$. We partition $[k_nk_{n-1}-1]$ in five sets  $\mathcal{J}_1$, $\mathcal{J}_2$, $\partial \mathcal{J}$, $\mathcal{R}$, and $\partial \mathcal{R}$, where
    \begin{itemize}
        \item $\mathcal{J}_1$ is the set of points $x\in [k_nk_{n-1}-1]$ such that $x$ and $x+1$ lie in the same set $\alpha k_n +I_{n,\beta,\ell}$, for some $\alpha\in [k_{n-1}]$, $\beta\in [d_n]$, $\ell\in [k_{n-2}]$ such that $(\alpha d_n+\beta)k_{n-2}+\ell\not\in 2_{n-1,i}$;
        \item $\mathcal{J}_2$ is the set of points $x\in [k_nk_{n-1}-1]$ such that $x$ and $x+1$ lie in 
        \[\bigsqcup_{\substack{\alpha\in [k_{n-1}]\\ \beta\in [d_n]}}{\bigsqcup_{\substack{l\in [k_{n-2}]\\ (\alpha d_n +\beta)k_{n-2}+\ell\in 2_{n-1,i}}}{(\alpha k_n+I_{n,\beta,\ell})}};\]
        \item $\partial \mathcal{J}$ is the set of points $x\notin\mathcal{J}_1\sqcup\mathcal{J}_2$ such that $x$ and $x+1$ lie in
        \[
            \bigsqcup_{\alpha\in[k_{n-1}]}(\alpha k_n+([k_n]\setminus I_{n,*}));
        \]
        \item $\mathcal{R}$ is the set of points $x$ such that $x$ and $x+1$ lie in the same set $\alpha k_n+I_{n,\ast}$, for some $\alpha\in [k_{n-1}]$;
        \item $\partial \mathcal{R}$ is the set of points $x$ such that $x=\max{(\alpha k_n+I_{n,\ast})}$ or $x+1=\min{(\alpha k_n+I_{n,\ast})}$, for some $\alpha\in [k_{n-1}]$.
    \end{itemize}
    
    By definition, $\lambda_{\psi_{n,i}^{-1}}(x)=1$ for every $x\in \mathcal{R}$, and $|\mathcal{R}|\leq k_{n-1}r_n\leq k_{n-1}^2k_{n-2}$. For $x\in\partial \mathcal{R}$, we use the coarse bound $\lambda_{\psi_{n,i}^{-1}}(x)\leq k_{n-1}k_n$. Note that $|\partial \mathcal{R}|\leq 2k_{n-1}$.
    
    For $\mathcal{J}_2$, note that $\psi_{n,i}$ restricts to an increasing bijection from the union of blocks of the second type in $I_{n+1,i}$ to the displayed set in the definition of $\mathcal{J}_2$. Recall first that blocks of the second type are only composed of spacers, and second that $\sigma_{n,j}>0$ for every $j\in\{1,\ldots,q_{n-1}-1\}$. Therefore, for $x\in \mathcal{J}_2$ (at most $|[d_n k_{n-1}k_{n-2}]\cap 2_{n-1,i}|\cdot k_{n-1}$ points), $\lambda_{\psi_{n,i}^{-1}}(x)=1$, except when there is a jump over a block of the first type in passing from $\psi_{n,i}^{-1}(x)$ to $\psi_{n,i}^{-1}(x+1)$. The latter occurs for at most $|C_{n-1,i}|$ points, in which case we use the bound $\lambda_{\psi_{n,i}^{-1}}(x)\leq k_{n-1}+1$.

    Let $x\in \mathcal{J}_1$. There exist $\alpha\in [k_{n-1}]$, $\beta\in [d_n]$, and $\ell\in [k_{n-2}]$ such that $rk_{n-2}+\ell\not\in 2_{n-1,i}$ and $x,x+1\in\alpha k_n +I_{n,\beta,\ell}$, where $r\coloneq \alpha d_n+\beta$. By definition of the (restricted) bijection $\psi_{n,i}\colon B_{n-1,i,rk_{n-2}+\ell}\to\alpha k_n+I_{n,\beta,\ell}$, and since $\xi_{n,i,r}$ is a translation on $B_{n-1,i,rk_{n-2}+\ell}$, we have $\lambda_{\psi_{n,i}^{-1}}(x)=\lambda_{\psi_{n-1,\beta}}(x-\alpha k_n)$. This implies that
    \[\sum_{x\in \mathcal{J}_1}{\omega(|\lambda_{\psi_{n,i}^{-1}}(x)|)}\leq k_{n-1}\sum_{\beta\in [d_n]}{\sum_{x\in I_{n,\beta}^-}{\omega(|\lambda_{\psi_{n-1,\beta}}(x)|)}}.\]
    
    We finally treat the case of $x\in\partial \mathcal{J}$. There exist $\alpha,\alpha'\in [k_{n-1}]$, $\beta,\beta'\in [d_n]$, and $\ell,\ell'\in [k_{n-2}]$ such that $x\in \alpha k_n+I_{n,\beta,\ell}$ and $x+1\in \alpha' k_n+I_{n,\beta',\ell'}$. The sets $\alpha k_n+I_{n,\beta}$ and $\alpha' k_n+I_{n,\beta'}$ are necessarily consecutive, so $\alpha=\alpha'$, and denoting $r=\alpha d_n+\beta$ and $r'=\alpha d_n+\beta'$, we have $|r-r'|\leq 1$. By definition, $\psi_{n,i}^{-1}(x)\in J_{n,i,\alpha d_n +\beta}$ and $\psi_{n,i}^{-1}(x+1)\in J_{n,i,\alpha' d_n +\beta'}$, so item~\ref{item:7} provides the bound $\lambda_{\psi_{n,i}^{-1}}(x)\leq 2k_{n-2}k_{n-1}+3\zeta(k'_{n-1})k_{n-1}$. Finally, an element of $\partial \mathcal{J}$ is completely determined by the values of $\alpha,\beta,\ell$ and by the connected component of $\alpha k_n +I_{n,\beta,\ell}$ containing it. The definition of $I_{n,\beta,\ell}$ and item~\ref{item:7} provide the number of connected components, so
    \[|\partial \mathcal{J}|\leq k_{n-1} d_n k_{n-2}\ 5\zeta(k'_{n-2})\leq 5k_n\zeta(k'_{n-2}).\]

    We gather all the estimates
    \begin{align*}
        \sum_{x\in [k_n k_{n-1}-1]}{\omega(|\lambda_{\psi^{-1}_{n,i}}(x)|)}\leq &\ k_{n-1}^2k_{n-2}\ \omega(1) + 2k_{n-1}\ \omega(k_{n-1}k_n)\\
        &+\ |[d_nk_{n-1}k_{n-2}]\cap 2_{n-1,i}|\cdot k_{n-1}\ \omega(1)+\ |C_{n-1,i}|\ \omega(2k_{n-1})\\
        &+\ 5k_n\zeta(k'_{n-2})\ \omega(2k_{n-2}k_{n-1}+3\zeta(k'_{n-1})k_{n-1})\\
        &+\ k_{n-1}\sum_{\beta\in [d_n]}{\sum_{x\in I_{n,\beta}^-}{\omega(|\lambda_{\psi_{n-1,\beta}}(x)|)}},
    \end{align*}
    and sum over $i\in [d_{n+1}]$
    \begin{align*}
        \frac{1}{k_{n+1}}\sum_{i\in [d_{n+1}]}{\sum_{x\in [k_n k_{n-1}-1]}{\omega(|\lambda_{\psi^{-1}_{n,i}}(x)|)}}
        \leq &\ \frac{k_{n-1}k_{n-2}}{k_n}\ \omega(1)+\frac{2}{k_n}\ \omega(k_{n-1}k_n)\\
        &+\ \frac{\sigma_{n-1}}{k_{n+1}}\ \omega(1)+\frac{q_{n-1}}{k_{n+1}}\ \omega(2k_{n-1})\\
        &+\ \frac{5\zeta(k'_{n-2})}{k_{n-1}}\ \omega(2k_{n-2}k_{n-1}+3\zeta(k'_{n-1})k_{n-1})\\
        &+\ \frac{1}{k_{n}}\sum_{\beta\in [d_n]}{\sum_{x\in I_{n,\beta}^-}{\omega(|\lambda_{\psi_{n-1,\beta}}(x)|)}}.
    \end{align*}
    We finally use the inequalities $\omega(2k_{n-2}k_{n-1}+3\zeta(k'_{n-1})k_{n-1})\leq \omega(4k_{n-2}k_{n-1})+\omega(6\zeta(k'_{n-1})k_{n-1})$ and $q_{n-1}/k_{n+1}\leq 1/k_{n-1}$ to get the desired bound.
\end{proof}

From Lemmas~\ref{lem:CocyclePsi},~\ref{lem:CocyclePhi},~\ref{lem:CocyclePsiInverse}, we deduce the following.

\begin{corollary}\label{cor:CocyclePhiSeries}
    If the assumptions of Lemma~\ref{lem:CocyclePsiInverse} are satisfied, and if the sequence
    \[\left (\frac{k_nk_{n-1}}{k_{n+1}}\omega(k_n)\right)_{n\geq 1}\]
    is bounded above, then
    \[\sup_{n\geq 1}{\frac{1}{k_{n+1}}\sum_{x\in [k_{n+1}-1]}{\omega(|\lambda_{\varphi_{n}}(x)|)}}< +\infty.\]
\end{corollary}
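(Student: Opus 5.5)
The plan is to define two sequences of averaged cocycle sums and chain Lemmas~\ref{lem:CocyclePsi} and~\ref{lem:CocyclePsiInverse} into a telescoping recursion. For $n\geq 2$ set
\[
a_n\coloneq\frac{1}{k_{n+1}}\sum_{i\in[d_{n+1}]}\sum_{x\in I_{n+1,i}^-}\omega(|\lambda_{\psi_{n,i}}(x)|),\qquad
b_n\coloneq\frac{1}{k_{n+1}}\sum_{i\in[d_{n+1}]}\sum_{x\in[k_nk_{n-1}-1]}\omega(|\lambda_{\psi^{-1}_{n,i}}(x)|).
\]
In this notation, inequality~\eqref{eq:InequalityGamma} reads $a_n\leq\Gamma_n+b_{n-1}$ and inequality~\eqref{eq:InequalityDelta} reads $b_n\leq\Delta_n+a_{n-1}$, both for $n\geq 3$. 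The index shift matches because the right-hand sides of both lemmas are normalised by $1/k_n$ and sum over $\beta\in[d_n]$, which is exactly the step $n-1$ version of $a$ or $b$.

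Iterating, $a_n\leq\Gamma_n+\Delta_{n-1}+a_{n-2}$ and $b_n\leq\Delta_n+\Gamma_{n-1}+b_{n-2}$. By induction,
\[
a_n+b_n\leq a_2+b_2+\sum_{m\geq 3}(\Gamma_m+\Delta_m)+\max(a_3+b_3,\,a_2+b_2),
\]
or some similar bound depending on the parity bookkeeping. Under the assumptions of Lemma~\ref{lem:CocyclePsiInverse}, the series of $\Gamma_m$ and $\Delta_m$ converge, so $\sup_n(a_n+b_n)<\infty$. The base terms $a_2,b_2$ (and $a_3,b_3$ if needed) are finite sums and hence finite.

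Next, Lemma~\ref{lem:CocyclePhi} gives, for $n\geq 3$,
\[
\frac{1}{k_{n+1}}\sum_{x\in[k_{n+1}-1]}\omega(|\lambda_{\varphi_n}(x)|)\leq\frac{\omega(k_n)}{k_nk_{n-1}}+\frac{k_nk_{n-1}}{k_{n+1}}\omega(k_n)+\Gamma_n+b_{n-1}.
\]
Each term on the right is bounded in $n$:
\begin{itemize}
\item the first term is bounded by sublinearity of $\omega$, as remarked after Lemma~\ref{lem:CocyclePhi};
\item the second term is bounded by the hypothesis of the corollary;
\item $\Gamma_n$ is bounded because it is summable;
\item $b_{n-1}$ is bounded by the previous paragraph.
\end{itemize}
For $n=1,2$ the quantity is a finite sum, since each $\varphi_n$ is a map between finite sets, so these indices do not affect the supremum.

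The only real care point is the index bookkeeping. I need to check that the right-hand sides of the two lemmas are literally $b_{n-1}$ and $a_{n-1}$ as defined, in particular that $\psi_{n-1,\beta}$ is indexed over $\beta\in[d_n]$ with domain $I_{n,\beta}$ and normalisation $1/k_n$. I also need to check that the recursion starts at a valid index ($n\geq3$ in both lemmas) with finite initial values. Once that is verified, the argument is a routine telescoping of summable error terms.
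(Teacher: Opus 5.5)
Your proposal is correct and is exactly the deduction the paper leaves implicit. Lemmas~\ref{lem:CocyclePsi} and~\ref{lem:CocyclePsiInverse} read $a_n\le\Gamma_n+b_{n-1}$ and $b_n\le\Delta_n+a_{n-1}$ for $n\ge 3$, so summability of the nonnegative sequences $(\Gamma_n)$, $(\Delta_n)$ bounds $b_{n-1}$ uniformly, and Lemma~\ref{lem:CocyclePhi} with sublinearity of $\omega$ and the boundedness hypothesis finishes the argument. As a small tidy-up, adding the two inequalities gives $a_n+b_n\le a_{n-1}+b_{n-1}+\Gamma_n+\Delta_n$, which telescopes directly to $a_n+b_n\le a_2+b_2+\sum_{m\ge 3}(\Gamma_m+\Delta_m)$ with no parity bookkeeping.
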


\subsection{Proof of Theorem~\ref{TheoremA}}

For every $n\geq 0$, we denote by
\[p_n\colon F_n+C_n\to F_n\]
the canonical projection defined by $p_n(f_n+c_n)=f_n$ for every $f_n\in F_n$ and $c_n\in C_n$. The map is well defined since the sets $F_{n-1}+c_n$, for $c_n\in C_n$, are pairwise disjoint.

\begin{lemma}\label{lem:projection}
    For every $n\geq 1$, the map $\varphi_n\circ\varphi_{n+1}\colon F_{n+2}\to F_n$ restricts to $p_n$ on $F_n+C_n$.
\end{lemma}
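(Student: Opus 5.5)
The plan is to trace a point $x=f+c$, with $f\in F_n$ and $c\in C_n$, through the two maps and unwind the definitions of $\psi_{n+1,0}$ and $\psi_{n,\beta}$. The one fact that needs checking is that $F_n+c$ is a block of the first type for $\psi_{n+1,0}$, so that the explicit formula from Section~\ref{sec:Construction} applies. After that, the two modular reductions collapse to $f$.

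\textbf{Step 1: locating $x$.} Since $F_n+C_n\subset F_{n+1}=[k_{n+1}]$ and $F_{n+1}$ is an initial segment of $F_{n+2}$, we have $x\in[k_{n+1}]$. Because $k_{n+2}>k_{n+1}k_n$, we have $d_{n+2}\geq 1$, so $I_{n+2,0}=[k_{n+1}k_n]$ exists and contains $[k_{n+1}]$. Hence $x\in I_{n+2,0}$ and $\varphi_{n+1}(x)=\psi_{n+1,0}(x)\bmod k_{n+1}$.

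\textbf{Step 2: $F_n+c$ is a block of the first type.} Apply the construction of Section~\ref{sec:Construction} with $n$ replaced by $n+1$. The sets $R_{n+2,0,\alpha}$, for $\alpha\in[k_n]$, together make up the last $k_nr_{n+1}$ elements of $I_{n+2,0}$. Lemma~\ref{lem:ProperlyChosenCuttingAndStacking2} (used at index $n+1$) says that $F_n+C_n$ avoids these elements. Therefore $c\in C_{n,0}$, and $F_n+c$ is one of the blocks, say $B_{n,0,rk_{n-1}+\ell}$ with $\ell\in[k_{n-1}]$ and $r=\alpha d_{n+1}+\beta$, where $\alpha\in[k_n]$ and $\beta\in[d_{n+1}]$. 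On a block of the first type, $\xi_{n+1,0,r}$ is the increasing map onto $\ell k_n+[k_n]$, so it sends $f+c\mapsto \ell k_n+f$. The defining formula then gives
\[
\psi_{n+1,0}(x)=\alpha k_{n+1}+\psi_{n,\beta}^{-1}(f+\ell k_n).
\]
This identification is the main point requiring care. It relies on the index shift in Lemma~\ref{lem:ProperlyChosenCuttingAndStacking2} and on $F_n+c$ lying entirely inside $I_{n+2,0}$. The latter holds because the characterization of $C_{n,0}$ as the set of $c$ with $(F_n+c)\cap I_{n+2,0}\neq\emptyset$ comes from the same lemma.

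\textbf{Step 3: reducing twice.} The map $\psi_{n,\beta}^{-1}$ sends $[k_nk_{n-1}]$ bijectively onto $I_{n+1,\beta}\subset[k_{n+1}]$. So $y\coloneq\psi_{n,\beta}^{-1}(f+\ell k_n)$ already lies in $[k_{n+1}]$, and therefore $\varphi_{n+1}(x)=y$. Since $y\in I_{n+1,\beta}$ with $\beta\in[d_{n+1}]$, and not in $I_{n+1,\ast}$, the definition of $\varphi_n$ gives
\[
\varphi_n(y)=\psi_{n,\beta}(y)\bmod k_n=(f+\ell k_n)\bmod k_n=f,
\]
because $f\in[k_n]$. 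Hence $\varphi_n\circ\varphi_{n+1}(f+c)=f=p_n(f+c)$, which is the claim.
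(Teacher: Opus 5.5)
Your Step 1 is wrong, and the rest of the argument depends on it. From Section~\ref{sec:overview} on, the indices are interleaved. Even-indexed sets belong to $T^e$ and odd-indexed ones to $T^o$, so one step of a single $(C,F)$-construction goes from $n$ to $n+2$. Accordingly, $C_n$ records the positions of the copies of $F_n$ inside $F_{n+2}$, that is, $F_n+C_n\subset F_{n+2}$. This is how Lemma~\ref{lem:ProperlyChosenCuttingAndStacking2} and the blocks of the first type are set up, and it is why the statement is formulated on $F_{n+2}$. The inclusion $F_n+C_n\subset F_{n+1}$ from Section~\ref{sec:PrelR1} does not apply here, because $F_{n+1}$ is a tower of the other system.

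In fact $F_n+C_n$ cannot fit in $[k_{n+1}]$. It has $k_{n+2}-\sigma_n$ elements, which is most of $F_{n+2}$ since $\sigma_n/k_{n+2}$ is summable. By contrast, $[k_{n+1}]$ occupies less than a $1/k_n$ fraction of $F_{n+2}$, because $k_{n+2}>k_{n+1}k_n$. So a point $f+c$ can lie in any $I_{n+2,i}$ with $i\in[d_{n+2}]$. Your argument, which is correct for copies inside $I_{n+2,0}$, covers only those copies, a small fraction of the total.

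The repair is to start from the correct localization. By Lemma~\ref{lem:ProperlyChosenCuttingAndStacking2} at index $n+1$, no copy $F_n+c$ meets the last $k_nr_{n+1}$ elements of any $I_{n+2,i}$. So a copy cannot cross the right endpoint of an $I_{n+2,i}$, and it lies in a single $I_{n+2,i}$. You also need that it does not sit inside $I_{n+2,\ast}$, where $\varphi_{n+1}$ is just reduction mod $k_{n+1}$ and the identity would fail. The paper's proof takes this for granted; it can be arranged by the same kind of pruning as in that lemma.

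Once $F_n+c\subset I_{n+2,i}$, you get $c\in C_{n,i}$ and $F_n+c=B_{n,i,rk_{n-1}+\ell}$ with $r=\alpha d_{n+1}+\beta$. Your Steps 2 and 3 then go through verbatim with $0$ replaced by $i$:
\begin{itemize}
\item $\psi_{n+1,i}(f+c)=\alpha k_{n+1}+\psi^{-1}_{n,\beta}(f+\ell k_n)$;
\item hence $\varphi_{n+1}(f+c)=\psi^{-1}_{n,\beta}(f+\ell k_n)\in I_{n+1,\beta}$;
\item and $\varphi_n\circ\varphi_{n+1}(f+c)=f$.
\end{itemize}
With this correction, your argument is exactly the paper's proof.
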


\begin{proof}
    Let $c\in C_n$. The set $F_n+c$ lies in $I_{n+2,i}$ for some $i\in [d_{n+2}]$ (in other words, $c\in C_{n,i}$). Specifically, it is included in $J_{n+1,i,r}$ for some $r\in [k_n d_{n+1}]$. We write $r=\alpha d_{n+1}+\beta$ with $\alpha\in [k_n]$ and $\beta\in [d_{n+1}]$. Finally, $F_n+c$ is equal to the block $B_{n,i,r k_{n-1}+\ell}$ for some $\ell\in [k_{n-1}]$. By construction, $\psi_{n+1,i}=\alpha k_{n+1}+\psi^{-1}_{n,\beta}(\cdot - c +\ell k_n)$ on $F_n+c$. By definition of $\varphi_{n+1}$, this implies that the map $\varphi_{n+1}$ coincides with $\psi^{-1}_{n,\beta}(\cdot - c +\ell k_n)$ on $F_n+c$, so it takes values in $I_{n+1,\beta}$. Again by definition, we get
    \[\varphi_n\circ\varphi_{n+1}=\left (\psi_{n,\beta}\circ\psi^{-1}_{n,\beta}(\cdot - c +\ell k_n)\right )\bmod k_n=\id_{F_n+c} -c=p_n.\]
    on $F_n+c$, concluding the proof.
\end{proof}

\begin{lemma}\label{lem:projection2}
        If the series $\sum{(k_nk_{n+1}/k_{n+2})}$ converges, then for every $n\geq 1$ there exists $A_{n+2}\subset F_{n}+C_{n}$ such that
        \begin{align*}
            \left (\frac{|F_{n+2}\setminus A_{n+2}|}{|F_{n+2}|}\right)_{n\geq 0}
        \end{align*}
        is summable and $p_{n-1}\circ\varphi_{n+1}=\varphi_{n-1}\circ p_n$ on $A_{n+2}$.
\end{lemma}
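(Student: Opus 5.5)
We work throughout with the identification used in Lemma~\ref{lem:projection}: a point of $F_{n+2}$ that lies in a translate $F_n+c$ with $c\in C_n$ is treated as a point of $F_n+C_n$. We take $A_{n+2}$ to be the set of such points where both sides of $p_{n-1}\circ\varphi_{n+1}=\varphi_{n-1}\circ p_n$ are defined and can be computed through first-type blocks only. The proof then has two parts: unwind the definitions on the first-type blocks to get the identity, and count the excluded points to get summability.

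\textbf{Step 1: the identity on first-type blocks.} Fix $c\in C_n$ and let $F_n+c$ be a block $B_{n,i,rk_{n-1}+\ell}$ of the first type in $I_{n+2,i}$, with $r=\alpha d_{n+1}+\beta$. As in the proof of Lemma~\ref{lem:projection}, on $F_n+c$ we have
\[
\varphi_{n+1}=\psi^{-1}_{n,\beta}(\cdot-c+\ell k_n),
\]
so $\varphi_{n+1}(F_n+c)=I_{n+1,\beta,\ell}$, read inside $I_{n+1,\beta}$. Set $y=x-c+\ell k_n$, so that $p_n(x)=x-c$ and $y$ is determined by $p_n(x)$. Write $y\in\alpha'k_n+I_{n,\beta',\ell'}$. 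If the index $(\alpha'd_n+\beta')k_{n-2}+\ell'$ is not in $2_{n-1,\beta}$ (and $y$ does not fall in $\alpha'k_n+I_{n,\ast}$), then $\psi^{-1}_{n,\beta}(y)$ lies in a first-type block $F_{n-1}+c'$ with $c'\in C_{n-1}$. On that block $\psi_{n,\beta}=\alpha'k_n+\psi^{-1}_{n-1,\beta'}(\cdot-c'+\ell'k_{n-1})$. Hence
\[
p_{n-1}\bigl(\varphi_{n+1}(x)\bigr)=\psi^{-1}_{n,\beta}(y)-c'=\psi_{n-1,\beta'}(y-\alpha'k_n)-\ell'k_{n-1}+\dots
\]
Reducing modulo $k_{n-1}$ and comparing with $\varphi_{n-1}(p_n(x))=\psi_{n-1,\beta'}(x-c \bmod \dots)\bmod k_{n-1}$ gives the identity. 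The check is a bookkeeping exercise that uses the translation structure of $\xi_{n,\beta,r}$ on a single block, exactly as in Lemma~\ref{lem:projection}, one level down.

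\textbf{Step 2: the exceptional set.} Define $A_{n+2}$ to consist of those $x\in F_n+C_n$ whose block is of the first type, where:
\begin{itemize}
\item the block lies in some $I_{n+2,i}$ and not in $I_{n+2,\ast}$ or in an $R_{n+2,i,\alpha}$;
\item the image point $y$ above avoids $I_{n,\ast}$-translates;
\item the image point $y$ above avoids the images of second-type blocks $\alpha'k_n+I_{n,\beta',\ell'}$.
\end{itemize}
The complement $F_{n+2}\setminus A_{n+2}$ then splits into four contributions:
\begin{enumerate}
\item spacers at level $n$, of density at most $\sigma_n/k_{n+1}$ plus $\sigma_{n+1}/k_{n+2}$, which is summable;
\item the sets $I_{n+2,\ast}$ and $R_{n+2,i,\alpha}$, of density at most $(r_{n+2}+d_{n+2}k_nr_{n+1})/k_{n+2}\lesssim k_nk_{n+1}/k_{n+2}$, which is summable by hypothesis;
\item preimages of the copies of $I_{n,\ast}$, of relative density at most $r_n/k_n\le k_{n-1}k_{n-2}/k_n$, which is summable;
\item preimages of second-type blocks at level $n-1$, whose relative density equals the spacer density $\sigma_{n-1}/k_n$ (up to constants), which is summable.
\end{enumerate}
The last item uses the fact that $\psi_{n,\beta}$ is a bijection, so that counting points preserves proportions.

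\textbf{Main obstacle.} I expect the main difficulty to be Step 1. The index juggling between the two levels of blocks must be done carefully: the block index of $\psi_{n,\beta}$ must line up with the modulo $k_{n-1}$ reduction in $\varphi_{n-1}$, and the offsets $\ell k_n$ and $\ell'k_{n-1}$ must cancel. I would handle this by writing both sides explicitly on a single pair of nested first-type blocks, $F_{n-1}+c'\subset$ level $n$ and $F_n+c\subset$ level $n+1$.
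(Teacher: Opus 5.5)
Your proof is correct and is essentially the paper's. Inside $I_{n+1,\beta}$, the points of $F_{n-1}+C_{n-1}$ are exactly the first-type blocks. The sets $R_{n+1,\beta,\alpha}$ contain none of them, by Lemma~\ref{lem:ProperlyChosenCuttingAndStacking2}. So your $A_{n+2}$ is just the paper's set $(F_n+C_n)\cap\varphi_{n+1}^{-1}(F_{n-1}+C_{n-1})$. Excluding blocks that meet $I_{n+2,\ast}$ is a harmless extra precaution, which the proof of Lemma~\ref{lem:projection} takes for granted. Your counting is the paper's bound $|\varphi_{n+1}^{-1}(A)|\le k_nd_{n+2}|A|+r_{n+2}$.

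The only real difference is Step 1. The paper does it with no bookkeeping: insert $\varphi_n$ and apply Lemma~\ref{lem:projection} at levels $n-1$ and $n$, giving $p_{n-1}\circ\varphi_{n+1}=\varphi_{n-1}\circ\varphi_n\circ\varphi_{n+1}=\varphi_{n-1}\circ p_n$ on that set. Your hand computation re-derives that lemma one level down, and it does close. The point to make explicit is that $\alpha'=\ell$ is forced, since $y\in\ell k_n+[k_n]$. Then $y-\alpha'k_n=p_n(x)$ and $p_{n-1}(\varphi_{n+1}(x))=\psi_{n-1,\beta'}(p_n(x))-\ell'k_{n-1}$. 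This equals $\varphi_{n-1}(p_n(x))$ because $\psi_{n-1,\beta'}(p_n(x))\in\ell'k_{n-1}+[k_{n-1}]$. So the obstacle you anticipate is not really there.

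You do need to fix the indices in Step 2. In this section $C_n$ links $F_n$ to $F_{n+2}$. Hence $F_{n+2}\setminus(F_n+C_n)$ has proportion $\sigma_n/k_{n+2}$. Your item 4 is bounded by $d_{n+2}k_n\sigma_{n-1}/k_{n+2}\le\sigma_{n-1}/k_{n+1}$. Both of these are summable because the measure is finite.

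The quantities $\sigma_n/k_{n+1}$ and $\sigma_{n-1}/k_n$ that you wrote are not summable here. The spacing lower bound of Lemma~\ref{lem:ProperlyChosenCuttingAndStacking1}, combined with $k_{n+2}>k_{n+1}k_n$, makes them tend to infinity.

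Two smaller corrections. The $R_{n+2,i,\alpha}$ contribution is bounded by $r_{n+1}/k_{n+1}<k_nk_{n-1}/k_{n+1}$, not by $k_nk_{n+1}/k_{n+2}$. It is still summable, and is in fact already counted among the spacers. Your item 3 is contained in item 4, because $\psi_{n,\beta}^{-1}$ maps $\alpha k_n+I_{n,\ast}$ onto $R_{n+1,\beta,\alpha}$.
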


\begin{proof}
    From Lemma~\ref{lem:projection}, we deduce that for every $x\in A_{n+2}\coloneq (F_n+C_n)\cap\varphi_{n+1}^{-1}(F_{n-1}+C_{n-1})$, we have $p_{n-1}\circ\varphi_{n+1}=\varphi_{n-1}\circ\varphi_n\circ\varphi_{n+1}=\varphi_{n-1}\circ p_n$. To prove that $(|F_{n+2}\setminus A_{n+2}|/|F_{n+2}|)$ is summable, we only have to prove the summability of $(|F_{n+2}\setminus \varphi_{n+1}^{-1}(F_{n-1}+C_{n-1})|/|F_{n+2}|)$ since the series $\sum{(\sigma_{n}/k_{n+2})}$ converges. It is straightforward that $|\varphi_{n+1}^{-1}(A)|\leq k_n d_{n+2}|A|+r_{n+2}$ for every subset $A\subset F_{n+2}$. This implies that
    \begin{align*}
        \frac{|F_{n+2}\setminus \varphi_{n+1}^{-1}(F_{n-1}+C_{n-1})|}{|F_{n+2}|}&\leq\frac{|\varphi_{n+1}^{-1}(F_{n+1}\setminus (F_{n-1}+C_{n-1}))|}{|F_{n+2}|}\\
        &\leq \frac{k_n d_{n+2}|F_{n+1}\setminus (F_{n-1}+C_{n-1})|+r_{n+2}}{k_{n+2}}\\
        &\leq \frac{k_n \frac{k_{n+2}}{k_{n+1}k_n}|F_{n+1}\setminus (F_{n-1}+C_{n-1})|+k_{n+1}k_n}{k_{n+2}}\\
        &\leq \frac{|F_{n+1}\setminus (F_{n-1}+C_{n-1})|}{|F_{n+1}|}+\frac{k_{n+1}k_n}{k_{n+2}}.
    \end{align*}
    The upper bound is summable, so we are done.
\end{proof}

\begin{lemma}\label{lem:ThisIsAnOE}
    One can skip steps in the cutting-and-stacking construction of the rank-one systems in such a way that
    \begin{itemize}
        \item the assumptions at the beginning of Section~\ref{sec:overview} are still valid;
        \item the sequence $(\Gamma_n)$ in Lemma~\ref{lem:CocyclePsi} is summable;
        \item the sequence $(\Delta_n)$ in Lemma~\ref{lem:CocyclePsiInverse} is summable;
        \item for every $n\geq 3$, there exists $A'_{n+1}\subset F_{n-1}+C_{n-1}$ such that the sequence
    \begin{align*}
        \left (\frac{|F_{n+1}\setminus A'_{n+1}|}{|F_{n+1}|}\right)_{n\geq 3}
    \end{align*}
    is summable, and $\lambda_{\varphi_n}(x)=\lambda_{\varphi_{n-2}}(p_{n-1}(x))$ for every $x\in A'_{n+1}$.
    \end{itemize}
\end{lemma}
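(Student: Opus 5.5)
The first three bullets only require skipping steps, and the standing assumptions, $(\Gamma_n)$ and $(\Delta_n)$ are all stable under passing to subsequences (as in Lemmas~\ref{lem:CocyclePsi} and~\ref{lem:CocyclePsiInverse}). So the work is in the fourth bullet, which asks for a set $A'_{n+1}$ on which the cocycle of $\varphi_n$ reproduces, through the projection $p_{n-1}$, the cocycle of $\varphi_{n-2}$. I would add the summability of $\sum k_nk_{n-1}/k_{n+1}$ and the conditions of Lemma~\ref{lem:projection2} to the list of requirements imposed by skipping steps.

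For a candidate set I would take $A'_{n+1}$ to be the set of $x\in F_{n-1}+C_{n-1}$ satisfying three conditions: $x+1$ lies in the same block $F_{n-1}+c$ as $x$; $p_{n-1}(x)$ is not the last point of $F_{n-1}$; and $\varphi_{n-2}$ at $p_{n-1}(x)$ and $p_{n-1}(x)+1$ is given by a formula that is stable under the construction. The key step is to transport the identity. By the proof of Lemma~\ref{lem:projection}, on a block $F_{n-1}+c\subset J_{n,i,r}$ with $r=\alpha d_n+\beta$ we have $\psi_{n,i}=\alpha k_n+\psi^{-1}_{n-1,\beta}(\cdot-c+\ell k_{n-1})$. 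Hence $\lambda_{\varphi_n}(x)=\lambda_{\psi^{-1}_{n-1,\beta}}(p_{n-1}(x)+\ell k_{n-1})$ whenever both points stay in the same $\alpha k_n+[k_n]$, which is automatic because $\psi_{n,i}$ maps the block into $\alpha k_n+I_{n,\beta}$.

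I then need $\lambda_{\psi^{-1}_{n-1,\beta}}(y+\ell k_{n-1})=\lambda_{\varphi_{n-2}}(y)$ for $y=p_{n-1}(x)$. Unwinding one more level, $\psi^{-1}_{n-1,\beta}$ restricted to $(\alpha'k_{n-1}+I_{n-1,\beta',\ell'})$ is, on the portions coming from first-type blocks, the map $\psi_{n-2,\beta'}$ composed with a translation. On the $I_{n-1,\ast}$ parts and the second-type parts it is increasing or a translation. Comparing with $\varphi_{n-2}$, which is $\psi_{n-2,\beta'}\bmod k_{n-2}$ on $I_{n-1,\beta'}$ and $\bmod k_{n-2}$ on $I_{n-1,\ast}$, the two cocycles agree except when a jump of size at least $k_{n-2}$ occurs, that is, when $\psi_{n-2,\beta'}(y)$ and $\psi_{n-2,\beta'}(y+1)$ lie in different copies $\alpha''k_{n-2}+[k_{n-2}]$. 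The mod reduction then changes the difference by a multiple of $k_{n-2}$, while the translation structure also shifts it. Similar failures occur at second-type blocks inside $J_{n-1,\beta,\cdot}$ and at boundaries between different $I_{n,\beta,\ell}$ components. I would declare all these points bad and remove them from $A'_{n+1}$.

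Counting the bad points is the step I expect to be hardest, because one has to make sure the exceptional sets of the two levels of unwinding really are small. Within a single block $F_{n-1}+c$ there are the following bad $y\in F_{n-1}$:
\begin{itemize}
\item the last point of the block;
\item the $O(k_{n-2})$ boundary points between the intervals $\alpha''k_{n-2}+[k_{n-2}]$ and around $I_{n-1,\ast}$ (these number $\le k_{n-1}/k_{n-2}+k_{n-2}$ by the counts $|\partial\mathcal{R}|,|\partial\mathcal{J}^-|$ from Lemma~\ref{lem:CocyclePsi} at level $n-1$);
\item the points in the second-type and connectivity-defect sets, bounded by $O(\zeta(k'_{n-2})k_{n-1}/k_{n-3})$ plus $\sigma_{n-2}$-type contributions, again via items~\ref{item:3}--\ref{item:7}.
\end{itemize}
Dividing by $k_{n-1}$, the proportion of bad points inside $F_{n-1}+C_{n-1}$ is bounded by a sum of terms such as $1/k_{n-1}$, $k_{n-2}k_{n-3}/k_{n-1}$, $\zeta(k'_{n-2})/k_{n-3}$ and $\sigma_{n-2}/k_n$. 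Adding $|F_{n+1}\setminus(F_{n-1}+C_{n-1})|/k_{n+1}$, which is summable because $\sum\sigma_{n}/k_{n+1}<\infty$, I get a bound on $|F_{n+1}\setminus A'_{n+1}|/|F_{n+1}|$ by a sequence that depends only on $(k_n),(k'_n)$ and the spacer counts. Such a sequence can be made summable by one more passage to a subsequence. The delicate point is the term $\zeta(k'_{n-2})/k_{n-3}$: it must be made small by growth of $k_{n-3}$ relative to $\zeta(k'_{n-2})$. This is a forward-looking condition, but it is achievable since $\zeta$ is sublinear and $k_n$ is comparable to $k'_n$, so $\zeta(k'_{n-2})/k_{n-2}\to0$. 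If needed, I would instead bound that term by $\zeta(k'_{n-2})k_{n-3}/k_{n-2}$, using the finer count in item~\ref{item:7}.
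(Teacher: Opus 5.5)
Your route is the paper's. On a first-type block $F_{n-1}+c\subset J_{n,i,r}$ you pass from $\lambda_{\varphi_n}(x)$ to $\lambda_{\psi^{-1}_{n-1,\beta}}(p_{n-1}(x)+\ell k_{n-1})$. Then, on a piece $\ell k_{n-1}+I_{n-1,\beta',\ell'}$ coming from a first-type block, you pass to $\lambda_{\psi_{n-2,\beta'}}(p_{n-1}(x))$, discarding second-type pieces and boundary points; this is exactly the paper's (P1)--(P2). Your remark that $\lambda_{\varphi_{n-2}}$ and $\lambda_{\psi_{n-2,\beta'}}$ differ only at jumps between copies $\alpha''k_{n-2}+[k_{n-2}]$ is correct. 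It even shows that (P3) is automatic once $p_{n-1}(x),p_{n-1}(x)+1\in I_{n-1,\beta',\ell'}$, since both images then lie in $\ell'k_{n-2}+[k_{n-2}]$.

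The gap is in the counting, which is the real content of the fourth bullet. You attach your connectivity-defect counts to level $n-1$ (the blocks $B_{n-2,\beta,\cdot}$ and $\psi_{n-1,\beta}$, governed by $\zeta(k'_{n-2})$), and this produces the term $\zeta(k'_{n-2})/k_{n-3}$. That term is not handled.
\begin{itemize}
\item The fact $\zeta(k'_{n-2})/k_{n-2}\to 0$ says nothing about $\zeta(k'_{n-2})/k_{n-3}$.
\item All other requirements force $k_{n-2}$ to be large relative to $k_{n-3}$, while $\zeta\to\infty$. So asking for $\zeta(k'_{n-2})\ll k_{n-3}$ is an upper constraint on $k_{n-2}$ pulling against everything else, and your argument gives no way to meet it.
\item The fallback bound $\zeta(k'_{n-2})k_{n-3}/k_{n-2}$ is not derived from anything.
\end{itemize}
The defects that matter live one level lower, in the domain of $\psi_{n-2,\beta'}$. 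For a first-type couple $(\beta',\ell')$, the points $y\in I_{n-1,\beta',\ell'}$ with $y+1\notin I_{n-1,\beta',\ell'}$ are at most as many as the connected components of $I_{n-1,\beta',\ell'}=\psi_{n-2,\beta'}^{-1}(\ell'k_{n-2}+[k_{n-2}])$. By item (VII) of Section~\ref{sec:PropBlock} applied at level $n-2$, this is $O(\zeta(k'_{n-3}))$. (The paper writes ``at most one point'' here; the extra factor is harmless.) There are at most $d_{n-1}k_{n-3}\leq k_{n-1}/k_{n-2}$ couples per block, so this gives a proportion $O(\zeta(k'_{n-3})/k_{n-2})$. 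That only requires $k_{n-2}$ to be large once $k_{n-3}$ is fixed.

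Second, the second-type points are not a small proportion of each block. For the block indexed by $(\beta,\ell)$ they number $|(\ell d_{n-1}k_{n-3}+[d_{n-1}k_{n-3}])\cap 2_{n-2,\beta}|\cdot k_{n-2}$, which can be a large part of $k_{n-1}$. So a per-block bound ``divided by $k_{n-1}$'' is not available, and you must sum over $c\in C_{n-1}$ instead. Each pair $(\beta,\ell)$ occurs for at most $d_{n+1}k_{n-1}$ blocks, so these points total at most $d_{n+1}k_{n-1}\sigma_{n-2}$, which gives the term $\sigma_{n-2}/k_n$ that you list.

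With these two corrections, add the remaining contributions:
\begin{itemize}
\item $1/k_{n-1}$ for block ends;
\item at most $k_{n-2}k_{n-3}/k_{n-1}$ for $I_{n-1,\ast}$ (less if, as you suggest, you keep its interior points);
\item $\sigma_{n-1}/k_{n+1}$ for the spacers outside $F_{n-1}+C_{n-1}$.
\end{itemize}
You then recover a bound of the same form as the paper's, summable after skipping steps.
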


\begin{proof}
    Let $c\in C_{n-1}$. There exist $i\in [d_{n+1}]$, $\alpha\in [k_{n-1}]$, $\beta\in [d_n]$, and $\ell\in [k_{n-2}]$ such that $F_{n-1}+c=B_{n-1,i,r k_{n-2}+\ell}$, where $r\coloneq \alpha d_n+\beta$. Recall that $\xi_{n,i,r}$ is the translation by $-c+\ell k_{n-1}$, from $B_{n-1,i,r k_{n-2}+\ell}$ to $\ell k_{n-1}+[k_{n-1}]$. We denote by $A'_{n+1,c}$ the set of points $x$ satisfying the following properties:
    \begin{enumerate}[label=(P\arabic*)]
        \item\label{item:LemmaThisIsAnOE1} $x,x+1\in F_{n-1}+c$;
        \item\label{item:LemmaThisIsAnOE2} there exist $\beta'\in [d_{n-1}]$ and $\ell'\in [k_{n-3}]$ such that $(\ell d_{n-1}+\beta')k_{n-3}+\ell'\not\in 2_{n-2,\beta}$ and $\xi_{n,i,r}(x),\xi_{n,i,r}(x)+1\in \ell k_{n-1}+ I_{n-1,\beta',\ell'}$;
        \item\label{item:LemmaThisIsAnOE3} $\lambda_{\varphi_{n-2}}(p_{n-1}(x))=\lambda_{\psi_{n-2,\beta'}}(p_{n-1}(x))$.
    \end{enumerate}
    Let $x\in A'_{n+1,c}$ and $\beta',\ell'$ as above. Using the study of $\mathcal{J}^+$ in the proof of Lemma~\ref{lem:CocyclePsi}, \ref{item:LemmaThisIsAnOE1} implies that $\lambda_{\psi_{n,i}}(x)=\lambda_{\psi_{n-1,\beta}^{-1}}(\xi_{n,i,r}(x))$. Using the study of $\mathcal{J}_1$ in the proof of Lemma~\ref{lem:CocyclePsiInverse}, \ref{item:LemmaThisIsAnOE2} implies that $\lambda_{\psi_{n-1,\beta}^{-1}}(\xi_{n,i,r}(x))=\lambda_{\psi_{n-2,\beta'}}(\xi_{n,i,r}(x)-\ell k_{n-1})$, where $\xi_{n,i,r}(x)-\ell k_{n-1}=x-c=p_{n-1}(x)$. By~\ref{item:LemmaThisIsAnOE1} and the discussion in the proof of Lemma~\ref{lem:CocyclePhi}, we have $\lambda_{\phi_{n}}(x)=\lambda_{\psi_{n,i}}(x)$, so we get $\lambda_{\phi_{n}}(x)=\lambda_{\phi_{n-2}}(p_{n-1}(x))$ by~\ref{item:LemmaThisIsAnOE3}.
    We have proved that for every $x\in A'_{n+1}\coloneq\bigsqcup_{c\in C_{n-1}}{A'_{n+1,c}}$, we have the desired equality between the cocycles. It remains to find an estimate for $\frac{|F_{n+1}\setminus A'_{n+1}|}{|F_{n+1}|}$. We have
    \begin{align*}
        \frac{|F_{n+1}\setminus A'_{n+1}|}{|F_{n+1}|}&=\frac{\sigma_{n-1}}{k_{n+1}}+\frac{1}{k_{n+1}}\sum_{c\in C_{n-1}}{|(F_{n-1}+c)\setminus A'_{n+1,c}|},
    \end{align*}
    so we need an estimate on each $|(F_{n-1}+c)\setminus A'_{n+1,c}|$. Let $c\in C_{n-1}$, $i$, $\ell$, and $r=\alpha d_n+\beta$ as in the beginning of the proof. We examine the set $F_{n-1}+c$:
    \begin{itemize}
        \item There is exactly one point which does not satisfy~\ref{item:LemmaThisIsAnOE1}.
        \item There are $|(\ell d_{n-1}k_{n-3}+[d_{n-1}k_{n-3}])\cap 2_{n-2,\beta}|$ couples $(\beta',\ell')$ such that $(\ell d_{n-1}+\beta')k_{n-3}+\ell'\in 2_{n-2,\beta}$, and for each of them, at most $k_{n-2}$ points $x$ such that $\xi_{n,i,r}(x)\in\ell k_{n-1}+ I_{n-1,\beta',\ell'}$. Moreover, there are $r_{n-1}$ points $x$ such that $\xi_{n,i,r}(x)\in\ell k_{n-1}+ I_{n-1,\ast}$. Finally, given a couple $(\beta',\ell')$ such that $(\ell d_{n-1}+\beta')k_{n-3}+\ell'\not\in 2_{n-2,\beta}$, there is at most one point which does not satisfy~\ref{item:LemmaThisIsAnOE2} relative to this couple.
        \item By the proof of Lemma~\ref{lem:CocyclePsi} and the discussion in the proof of Lemma~\ref{lem:CocyclePhi}, we know that~\ref{item:LemmaThisIsAnOE3} is satisfied at least for the points $x\in F_{n-1}+c$ such that $p_{n-1}(x)\in F_{n-3}+C_{n-3}$ and moreover $p_{n-1}(x)$ and $p_{n-1}(x)+1$ lie in the same $F_{n-3}+c'$. Therefore, at most $\sigma_{n-3}+q_{n-3}$ points of $F_{n-1}+c$ do not satisfy~\ref{item:LemmaThisIsAnOE3}.
    \end{itemize}

    We thus get the following estimate:
    \begin{align*}
        |(F_n+c)\setminus A'_{n+1,c}|\leq&\  1\\
        &+|(\ell d_{n-1}k_{n-3}+[d_{n-1}k_{n-3}])\cap 2_{n-2,\beta}|\cdot k_{n-2}+k_{n-2}k_{n-3}+d_{n-1}k_{n-3}\\
        &+\sigma_{n-3}+q_{n-3}.
    \end{align*}
    We sum over $c\in C_{n-1}$, which means that we sum over $i\in [d_{n+1}]$, $\alpha\in [k_{n-1}]$, $\beta\in [d_n]$, and $\ell\in [k_{n-2}]$ such that $(\alpha d_n+\beta)k_{n-2}+\ell\not\in 2_{n-1,i}$. We first treat the second summand. The quantity
    \[\sum_{\ell\in [k_{n-2}]}{|(\ell d_{n-1}k_{n-3}+[d_{n-1}k_{n-3}])\cap 2_{n-2,\beta}|}=|[k_{n-2}d_{n-1}k_{n-3}]\cap 2_{n-2,\beta}|\]
    is exactly equal to the number of indices $j\in [k_{n-2}d_{n-1}k_{n-3}]$ such that $B_{n-2,\beta,j}$ is a block of the second type. Multiplying by $k_{n-2}$, namely the cardinality of each $B_{n-2,\beta,j}$, we get the number of spacers lying in a block of the second type in $I_{n-1,\beta}$. This implies that we can bound the sum over $\beta\in [d_n]$ by $\sigma_{n-2}$. We finally have:
    \begin{align*}
        \sum_{c\in C_{n-1}}{|(F_n+c)\setminus A'_{n+1,c}|}\leq & \ |C_{n-1}|\\
        &+d_{n+1}k_{n-1}\cdot \sigma_{n-2}+d_{n+1}k_{n-1}d_n k_{n-2}\cdot (k_{n-2}k_{n-3}+d_{n-1}k_{n-3})\\
        &+d_{n+1}k_{n-1}d_n k_{n-2}\cdot (\sigma_{n-3}+q_{n-3})\\
        \leq &\ \frac{k_{n+1}}{k_{n-1}}\\
        &+\frac{k_{n+1}\sigma_{n-2}}{k_n}+\frac{k_{n+1}k_{n-2}k_{n-3}}{k_{n-1}}+\frac{k_{n+1}}{k_{n-2}}\\
        &+\frac{k_{n+1}\sigma_{n-3}}{k_{n-1}}+\frac{k_{n+1}}{k_{n-3}}.
    \end{align*}
    This enables us to get
    \[\frac{|F_{n+1}\setminus A'_{n+1}|}{|F_{n+1}|}\leq\frac{\sigma_{n-1}}{k_{n+1}}+\frac{\sigma_{n-2}}{k_n}+\frac{\sigma_{n-3}}{k_{n-1}}+\frac{1}{k_{n-1}}+\frac{k_{n-2}k_{n-3}}{k_{n-1}}+\frac{1}{k_{n-2}}+\frac{1}{k_{n-3}},\]
    where we can assume without loss of generality that the upper bound is summable over $n$.
\end{proof}

\begin{proof}[Proof of Theorem~\ref{TheoremA}]
Let $T^e$ and $T^o$ be two rank-one systems, and $\omega\colon\R_+\to\R_+$ be a sublinear map. By~\cite[Lemma~2.12]{carderiBelinskayaTheoremOptimal2023}, we can assume without loss of generality that $\omega$ is increasing.

For each system, we fix a cutting-and-stacking construction and denote the associated parameters by $(k'_{2n})_{n\geq 0}$, $(q'_{2n})_{n\geq 0}$, $(\sigma'_{2n,i})_{n\geq 0}$ and $(k'_{2n+1})_{n\geq 0}$, $(q'_{2n+1})_{n\geq 0}$, $(\sigma'_{2n+1,i})_{n\geq 0}$ respectively. Skipping steps, we assume that $\sum_{n\geq 1}{(\sigma'_n/k'_{n+1})}\leq 1/2$ as in Lemma~\ref{lem:ProperlyChosenCuttingAndStacking1}. We set $k_n\coloneq k'_n-\left\lfloor k'_n/\zeta(k'_n)\right\rfloor$. By our preparatory work, we can assume without loss of generality that:
\begin{enumerate}
    \item\label{item:ProofTheorem1} $k_{n+2}>k_{n+1}>k_n$ and $k_{n+2}>k_{n+1}k_n$;
    \item\label{item:ProofTheorem2} the remaining assumptions of Lemma~\ref{lem:ProperlyChosenCuttingAndStacking1} are satisfied, as well as that of Lemma~\ref{lem:ProperlyChosenCuttingAndStacking2};
    \item\label{item:ProofTheorem3} $(\Gamma_n+\Delta_n)_{n\geq1}$ is summable and $((k_nk_{n-1}/k_{n+1})\cdot\omega(k_n))_{n\geq 1}$ is bounded (see Lemmas~\ref{lem:CocyclePsi} and~\ref{lem:CocyclePsiInverse});
    \item\label{item:ProofTheorem4} subsets $A_n$ and $A'_n$ as in Lemmas~\ref{lem:projection2} and~\ref{lem:ThisIsAnOE} exist.
\end{enumerate}

Assumption~\eqref{item:ProofTheorem1} guarantees that the construction carried out in Section~\ref{sec:Construction} is well-defined. Assumption~\eqref{item:ProofTheorem2} implies that we can complete $(k_{2n})_{n\geq 0}$ and $(k_{2n+1})_{n\geq 0}$ with spacing and cutting parameters $(\sigma_{2n})_{n\geq 0}$, $(q_{2n})_{n\geq 0}$ and $(\sigma_{2n+1})_{n\geq 0}$, $(q_{2n+1})_{n\geq 0}$ respectively, in such a way that they still describe cutting-and-stacking constructions of $T^e$ and $T^o$, and that the conclusions of Lemmas~\ref{lem:ProperlyChosenCuttingAndStacking1} and~\ref{lem:ProperlyChosenCuttingAndStacking2} hold. We needed the latter for the estimates leading to Corollary~\ref{cor:CocyclePhiSeries}.

We now consider $T^e\colon (X^e,\mu^e)\to (X^e,\mu^e)$ and $T^o\colon (X^o,\mu^o)\to (X^o,\mu^o)$ as rank-one systems defined with the $(C,F)$-construction associated to the new parameters $(k_{2n})_{n\geq 0}$, $(\sigma_{2n})_{n\geq 0}$, $(q_{2n})_{n\geq 0}$ and $(k_{2n+1})_{n\geq 0}$, $(\sigma_{2n+1})_{n\geq 0}$, $(q_{2n+1})_{n\geq 0}$ respectively, where $X^e$ and $X^o$ are defined as direct limits as explained in Section~\ref{sec:PrelR1}, with all quantities indexed over even (resp.~odd) integers. To be more precise, $X^e$ is the direct limit of $(X_{2n}\coloneq F_{2n}\times C_{2n}\times C_{2n+2}\times\dots)_{n\geq 0}$ with connecting maps
\[(f_{2n},c_{2n},c_{2n+2},\dots)\in X_{2n}\mapsto (f_{2n}+c_{2n},c_{2n+2},\dots)\in X_{2n+2},\]
and we denote by $\iota^{(2n)}\colon X_{2n}\to X^e$ the canonical embedding. We define the set $X^o$ using odd indices in the same fashion. For every $n\geq 0$, we also denote by $\pi_n\colon \iota^{(n)}(X_n)\to F_n$ the canonical projection, defined by $\pi_n(\iota^{(n)}(f_n,c_n,c_{n+2},\dots))=f_n$. Note that we have $\pi_n=p_n\circ\pi_{n+2}$ on $\iota^{(n)}(X_n)$.

Since $(\sigma_{n-1}/k_{n+1})_n$ is summable, a first application of the Borel-Cantelli lemma implies that for almost every $x\in X^e$, we have $x\in\iota^{(2n)}(X_{2n})$ for large enough integers $n$, so that the quantity $\pi_{2n}(x)$ is well-defined for large values of $n$. The same holds for almost every $y\in X^o$, replacing $2n$ by $2n+1$.

Assumption~\eqref{item:ProofTheorem4}, Lemmas~\ref{lem:projection2} and~\ref{lem:ThisIsAnOE}, and a second application of the Borel-Cantelli lemma imply that for almost every $x\in X^e$ (resp.~$y\in X^o$), there exists an integer $N^e(x)\geq 0$ (resp.~$N^o(y)\geq 0$) such that for every $j\geq N^e(x)$ (resp.~$j\geq N^o(y)$) we have
\begin{equation}\label{eq:ProofTheorem1}
    \varphi_{2j+1}(\pi_{2(j+1)}(x))=p_{2j+1}\circ\varphi_{2j+3}(\pi_{2(j+2)}(x))\text{ and }\lambda_{\varphi_{2j+3}}(\pi_{2(j+2)}(x))=\lambda_{\varphi_{2j+1}}(\pi_{2(j+1)}(x))
\end{equation}
\begin{equation}\label{eq:ProofTheorem2}
    \text{(resp.~} \varphi_{2j}(\pi_{2j+1}(x))=p_{2j}\circ\varphi_{2j+2}(\pi_{2j+3}(x))\text{ and }\lambda_{\varphi_{2j+2}}(\pi_{2j+3}(y))=\lambda_{\varphi_{2j}}(\pi_{2j+1}(y))\text{)}.
\end{equation}

We set $z_{2j+1}(x)=\varphi_{2j+1}(\pi_{2(j+1)}(x))$ for every $j\geq N^e(x)$ (resp.~$z_{2j}(y)=\varphi_{2j}(\pi_{2j+1}(y))$ for every $j\geq N^o(y)$). The first equality in~\eqref{eq:ProofTheorem1} implies that $z_{2j+3}(x)-z_{2j+1}(x)$ is of the form $k-p_{2j+1}(k)$ (with $k\coloneq \varphi_{2j+3}(\pi_{2(j+2)}(x))\in F_{2j+1}+C_{2j+1}$), so it lies in $C_{2j+1}$, and similarly $z_{2j+2}(y)-z_{2j}(y)\in C_{2j}$ using~\eqref{eq:ProofTheorem2}. From this, we deduce that the quantity
\[\iota^{(2j+1)}( z_{2j+1}(x), z_{2j+3}(x)-z_{2j+1}(x),z_{2j+5}(x)-z_{2j+3}(x),\dots )\]
\[\text{(resp.~}\iota^{(2j)}( z_{2j}(y), z_{2j+2}(y)-z_{2j}(y),z_{2j+4}(y)-z_{2j+2}(y),\dots )\text{)}\]
is well-defined and independent of $j\geq N^e(x)$ (resp.~$j\geq N^o(y)$), and we denote it by $\varphi^o(x)$ (resp.~$\varphi^e(y)$).

The maps $\varphi^o\colon X^o\to X^o$ and $\varphi^e\colon X^e\to X^e$ are inverses of each other. Indeed, for almost every $y\in X^e$ and for a large enough integer $j$, we have
\begin{align*}
    \pi_{2j+1}(\varphi^o\circ\varphi^e(y))&=z_{2j+1}(\varphi^e(y))\\
    &=\varphi_{2j+1}(\pi_{2(j+1)}(\varphi^e(y)))\\
    &=\varphi_{2j+1}(z_{2(j+1)}(y))\\
    &=\varphi_{2j+1}\circ\varphi_{2(j+1)}\circ\pi_{2j+3}(y)\\
    &=p_{2j+1}\circ\underbrace{\pi_{2j+3}(y)}_{\in F_{2j+1}+C_{2j+1}}\\
    &=\pi_{2j+1}(y),
\end{align*}
which implies that $\varphi^o\circ\varphi^e=\id_{X^e}$ almost everywhere, and similarly $\varphi^e\circ\varphi^o=\id_{X^o}$.

Moreover, these maps preserve the probability measures. By symmetry, it is enough to check that $\mu^e((\varphi^o)^{-1}(A\times C_{2n+1}\times C_{2n+3}\times\dots))\leq\mu^o(A\times C_{2n+1}\times C_{2n+3}\times\dots)$ for every $n\geq 0$ and $A\subset F_{2n+1}$. Denoting $u_{2j}$ the measure of each level of the $j$-th tower for $T^o$, namely $u_{2j}\coloneq\mu^{e}(\{f_{2j}\}\times C_{2j}\times C_{2j+2}\times\dots)$ for any $f_{2j}\in F_{2j}$, and similarly for $u_{2j+1}$, we have
\begin{align*}
    &\mu^e((\varphi^o)^{-1}(A\times C_{2n+1}\times C_{2n+3}\times\dots))\\
    =&\ \lim_{j\to +\infty}{\mu^e(\{N^o\leq j\}\cap (\varphi^o)^{-1}(A\times C_{2n+1}\times C_{2n+3}\times\dots))}\\
    \leq &\ \limsup_{j\to +\infty}{\mu^e(z_{2j+1}^{-1}(A+ C_{2n+1}+\dots + C_{2j-1}))}\\
    \leq &\ \limsup_{j\to +\infty}{\mu^e(\{x\in \iota^{(2(j+1))}(X^{2(j+1)})\mid\pi_{2(j+1)}(x)\in\varphi_{2j+1}^{-1}(A+ C_{2n+1}+\dots +  C_{2j-1})\})}\\
    \leq &\ \limsup_{j\to +\infty}{u_{2j+2}}\cdot \left |\varphi_{2j+1}^{-1}(A+ C_{2n+1}+\dots + C_{2j-1})\right |\\
    \leq &\ \limsup_{j\to +\infty}{u_{2j+2}\cdot (d_{2j+2}k_{2j}\cdot\left |A\times C_{2n+1}\times\dots\times C_{2j-1}\right |+r_{2j+2})}\\
    \leq &\ \limsup_{j\to +\infty}{\bigg ( \frac{u_{2j+2}d_{2j+2}k_{2j}}{u_{2j+1}}\mu^o\bigg(A\times C_{2n+1}\times\dots\times C_{2j-1}\times \prod_{m\geq j}{F_{2m+1}}\bigg)+u_{2j+2}k_{2j}k_{2j+1}\bigg)}\\
    \leq &\ \mu^o (A\times C_{2n+1}\times C_{2n+3}\times\dots),
\end{align*}
where the last line uses the fact that $u_n\sim 1/k_n$.

It remains to prove that $\varphi^o$ and $\varphi^e$ are orbit equivalences between $T^e$ and $T^o$, with $\omega$-integrable cocycles. Since we have $\pi_{2j}(T^ex)=\pi_{2j}(x)+1$ for almost every $x\in X^e$ and for a large enough integer $j$, the second equality in~\eqref{eq:ProofTheorem1} exactly means that
\[z_{2j+3}(T^ex)-z_{2j+1}(T^ex)=z_{2j+3}(x)-z_{2j+1}(x).\]
This implies that for a large enough integer $j$, we have
\begin{align*}
    \varphi^o(T^ex)&=\iota^{(2j+1)}( z_{2j+1}(T^ex), z_{2j+3}(T^ex)-z_{2j+1}(T^ex),z_{2j+5}(T^ex)-z_{2j+3}(T^ex),\dots )\\
    &=\iota^{(2j+1)}( z_{2j+1}(T^ex), z_{2j+3}(x)-z_{2j+1}(x),z_{2j+5}(x)-z_{2j+3}(x),\dots )\\
    &=(T^o)^{c(x)}(\varphi^o(x))
\end{align*}
with $c(x)=z_{2j+1}(T^ex)-z_{2j+1}(x)$. We similarly get an equality of the form $\varphi^e(T^oy)=(T^e)^{c'(y)}\varphi^e(y)$, so the maps $\varphi^o$ and $\varphi^e$ are orbit equivalences between $T^e$ and $T^o$ with cocycles $c\colon X^e\to\Z$ and $c'\colon X^o\to\Z$ defined as above. We now prove that the cocycle $c\colon X^e\to\Z$ is $\omega$-integrable (the proof will be the same for the other cocycle $c'\colon X^o\to\Z$). Given $x\in X^e$, in the appropriate conull subset of $X^e$, we have $c(x)=z_{2j+1}(T^ex)-z_{2j+1}(x)$ for every large enough integer $j$, namely $c(x)=\lambda_{\varphi_{2j+1}}(p_{2(j+1)}(x))$. This means that the sequence $(c_{2j+1}\colon X^e\to\Z)_{j\geq 0}$ of maps defined by
\[
    \forall x\in X^e,\ c_{2j+1}(x)=\begin{cases}
                    \lambda_{\varphi_{2j+1}}(p_{2(j+1)}(x))&\text{if }x,T^ex\in \iota^{2(j+1)}(X_{2(j+1)})\\
                    0&\text{ otherwise}
                \end{cases}
\]
converges pointwise to $c\colon X^e\to\Z$. Fatou's lemma thus yields
\[\int_{X^e}{\omega(|c(x)|)\mathrm{d}\mu(x)}\leq \liminf_{j\to +\infty}{\int_{X^e}{\omega(|c_{2j+1}(x)|)\mathrm{d}\mu(x)}}=\liminf_{j\to +\infty}{u_{2(j+1)}\sum_{x\in [k_{2(j+1)}-1]}{\omega(|\lambda_{\varphi_{2j+1}}(x)|)}}.\]
We finally use Corollary~\ref{cor:CocyclePhiSeries}, assumption~\eqref{item:ProofTheorem3}, and the fact that $u_{2(j+1)}\sim \frac{1}{k_{2(j+1)}}$ to finally get that $\int_{X^e}{\omega(|c(x)|)\mathrm{d}\mu(x)}$ is finite. This concludes the proof.
\end{proof}

\section{Optimality of Belinskaya's theorem}\label{sec:Belinskaya}

We are now able to complete Carderi, Joseph, Le Maître, and Tessera's main result in~\cite{carderiBelinskayaTheoremOptimal2023}.

\begin{proof}[Proof of Theorem~\ref{TheoremB}]
By assumption, $S$ factors onto some odometer $S_0\in\aut$, i.e. there exists a measure-preserving map $\pi\colon X\to X$ such that $\pi\circ S= S_0\circ\pi$ almost everywhere. The point spectrum of $S$ being at most countable, there exists an irrational number $\theta$ such that $e^{2i\pi\theta}$ is not an eigenvalue of $S$. By Theorem~\ref{TheoremA}, $S_0$ and the irrational rotation $R_{\theta}$ of angle $\theta$ are $\omega$-integrably orbit equivalent. This means that $R_{\theta}$ is isomorphic to some $T_0\in\aut$ which has the same orbits as $S_0$ and such that the corresponding orbit cocycles $c,c'\colon X\to\Z$ of $\id_X$, defined by
\[S_0x=T_0^{c(x)}x\text{ and }T_0x=S_0^{c'(x)}x,\]
are $\omega$-integrable.

Let $T\in\aut$ be defined by $Tx=S^{c'(\pi(x))}x$. $T$ factors onto $T_0$, via the same factor map $\pi\colon X\to X$. Moreover, $T$ and $S$ have the same orbits, with cocycles $c\circ\pi$ and $c'\circ\pi$. Indeed, one inclusion between orbits is obvious by definition of $T$ and the other can be deduced from the identity $1=\sum_{i=0}^{c(\cdot)-1}{c'(T_0^i(\cdot))}$. This implies that $S$ and $T$ are $\omega$-integrable. Finally, since $T$ factors onto $R_{\theta}$, its point spectrum contains $e^{2i\pi\theta}$, so $S$ and $T$ have different point spectrums and are not flip-conjugate.
\end{proof}

\printbibliography

\end{document}